\documentclass[11pt]{article}
\usepackage{fullpage}
\usepackage{authblk}
\usepackage[T1]{fontenc}
\usepackage[utf8]{inputenc}
\usepackage{amsthm}
\usepackage{amsmath}
\usepackage{amssymb}
\usepackage{graphicx}
\usepackage{dsfont}
\usepackage{cite}
\usepackage{amsmath}
\usepackage{array}
\usepackage{multirow}
\usepackage{caption}
\usepackage{color}
\usepackage{hyperref}
\usepackage{cleveref}
\usepackage{verbatim}
\usepackage{accents}
\usepackage{comment}
\usepackage{xcolor}
\usepackage{algorithm}
\usepackage{algpseudocode}
\usepackage{mathtools}
\usepackage{subcaption}

\usepackage[normalem]{ulem}
\usepackage{cancel}
\usepackage{multicol}

\theoremstyle{plain}

\theoremstyle{plain}
\newtheorem{prop}{\protect\propositionname}
\theoremstyle{plain}
\newtheorem{lem}{\protect\lemmaname}
\theoremstyle{plain}
\newtheorem{thm}{\protect\theoremname}
\theoremstyle{plain}
\newtheorem{cor}{\protect\corollaryname}  
\theoremstyle{definition}
\newtheorem{defn}{\protect\definitionname}
\theoremstyle{definition}
\newtheorem{assump}{\protect\assumptionname}
\theoremstyle{definition}
\newtheorem{rem}{\protect\remarkname}

\makeatother
  
\usepackage{babel} 

\providecommand{\claimname}{Claim}
\providecommand{\lemmaname}{Lemma}
\providecommand{\propositionname}{Proposition}
\providecommand{\theoremname}{Theorem}
\providecommand{\corollaryname}{Corollary} 
\providecommand{\definitionname}{Definition}
\providecommand{\assumptionname}{Assumption}
\providecommand{\remarkname}{Remark}

\newcommand{\EE}{\mathbb{E}}

\newcommand{\RR}{\mathbb{R}}

\newcommand{\gfk}{\nabla f(x_k)}
\newcommand{\gfkk}{\nabla f(x_{k-1})}
\newcommand{\vkh}{\hat{v}_k}
\newcommand{\vkkh}{\hat{v}_{k-1}}

\newcommand{\op}{\mathrm{op}}

\newcommand{\Proj}{\mathrm{Proj}}

\newcommand{\brac}[1]{\left( #1\right)}
\newcommand{\sqbrac}[1]{\left[ #1\right]}

\DeclarePairedDelimiter{\norm}{\lVert}{\rVert}
\DeclarePairedDelimiter{\abs}{\lvert}{\rvert}
\DeclarePairedDelimiter{\inner}{\langle}{\rangle}
\newcommand{\R}{\mathbb{R}}

\title{{\sc Gradient Descent with Stochastic Subspaces \\ via Persistence of Memory}}
\author[1]{Subhro Ghosh\thanks{\texttt{subhrowork@gmail.com}}}
\author[1]{Clement Z.Q. Ng\thanks{Corresponding author, \texttt{clementng@u.nus.edu}}}
\author[2]{Pierre-Louis Poirion\thanks{Corresponding author, \texttt{pierre-louis.poirion@riken.jp}}}
\author[2,3]{Akiko Takeda\thanks{\texttt{takeda@mist.i.u-tokyo.ac.jp}}}
\affil[1]{Department of Mathematics, National University of Singapore, Singapore}
\affil[2]{Center for Advanced Intelligence Project,
RIKEN, Tokyo, Japan}
\affil[3]{Department of Mathematical Informatics,
The University of Tokyo, Tokyo, Japan}
\date{}

\begin{document}

\maketitle
\begingroup
\renewcommand{\thefootnote}{}
\footnotetext{\text{Authors are listed in alphabetical order.}}
\endgroup

\begin{abstract}
Stochastic subspace methods have gained popularity as gradient descent based techniques for large scale optimisation problems, especially in distributed settings. In this paper, we introduce the technique of "persistence of memory" to greatly extend and improve the random subspace methods. To this end, we leverage a vector that is only {\it weakly correlated} with the gradient in order to provide a  {\it guiding structure} to the generative process of the random subspace along which the descent is going to take place. This {\it guidance vector} may be fixed for a large number of iterations, only to be refreshed at wide intervals (on whose size we can provide guarantees in terms of problem parameters). In important machine learning settings, such as optimisation problems embodying sparsity or a minibatch structure, we show that the {\it guidance vector} can be obtained in an effective and computationally inexpensive manner by leveraging the structured properties of the problem. En route, we establish to our knowledge the first theoretical analysis of classical SSD methods for sparse functions. In a local neighbourhood of the optimum, we demonstrate an {\it alignment phenomenon} of our gradient estimates with a low-lying eigenvector of the Hessian, allowing a once-for-all computation of the guidance vector which renders the method computationally favourable even in scenarios with unstructured objectives.
\end{abstract}

\newpage

\tableofcontents

\newpage
\section{Introduction}
In recent years, there are many literature being developed around the study of high-dimensional unconstrained optimization. In simple terms, such a problem can be formulated as
\begin{equation}
    \min_{x \in \RR^n} f(x).
\end{equation}
for a function $f:\RR^n \mapsto \RR$.
When the dimension $n$ is sufficiently large, as is the case in modern machine learning applications, classical gradient methods often become prohibitively expensive due to the need to compute full gradients at every iteration step. 

To address this issue, subspace optimization methods have been developed, which are, in turn generalisations of the more basic coordinate descent approach. In order to save computational costs and bypass the high dimensional constraints ($x \in \mathbb{R}^n)$, randomised variants of these methods have gained popularity; a key example of this being the so-called Stochastic Subspace Descent (SSD) method. For details on the substantial literature on subspace descent, randomised and otherwise, we refer the reader to \cite{kozak2021stochastic} and the references, therein as a partial list.

However, standard SSD methods generally incur a high cost in terms of iteration complexity, which is in part a consequence of the random noise inherent in their setup. In this work, we propose to enhance the state of the art in this problem by unveiling structured stochastic subspace generation techniques that vastly improve the iteration complexity while retaining computational tractability in a wide range of structured machine learning problems.

\subsection{Motivation}
In Stochastic Subspace Descent (SSD), at each update step, the gradients are computed along a randomly chosen low-dimensional subspace. In other words, instead of computing the gradient $\nabla f(x)$, SSD calculates the projection of the gradient vector onto a {\it random} subspace of dimension $d$. In formulaic terms, the $k$-th update step in SSD looks like 
\[
    x_{k} = x_{k-1} - \alpha P_{k-1} P_{k-1}^\top \nabla f(x_{k-1}),
\]
where $P_k \in \mathbb{R}^{n \times d}$ is a tall matrix with orthogonal columns and $d \ll n$. 

In general, this subspace is chosen {\it uniformly at random}; for instance, the specific device used by the initial work \cite{kozak2021stochastic} is that of Haar-distributed matrices $P_k$. This algorithm is particularly useful for high-dimensional problems, and the main advantage of this method is that instead of calculating the full gradient, it only calculates a projection of the gradient onto a lower dimension. Indeed in some cases, computing the full gradient of the function might be memory heavy, in such cases we only need to compute a small number of directional derivatives (using $d$ random directions) rather than $n$ of them (think of the gradient as $n$ directional derivatives in the coordinate directions).

In fact, this idea generalises coordinate descent - by allowing projections onto non-coordinate subspaces, and can be particularly effective in cases where the function exhibits low intrinsic dimensionality or when many of the gradient components are negligible. Other situations where subspace methods can be considered are discussed in the later subsections. 

Denoting the cost of 1 computation of directional derivative as $\xi$, the standard gradient descent has a iteration complexity of $O(\epsilon^{-2})$ and the cost per iteration is $O(n\xi)$ ($n$ number of directional derivatives), for a total computational cost of $O(n\xi\epsilon^{-2})$. In comparison, SSD has a decreased per iteration cost of $O(d\xi)$ as compared to $O(n\xi)$, but at the same time suffers from the curse of dimensionality in its iteration complexity ($O(\epsilon^{-2} n/d)$ instead). While each iteration is computationally cheaper, the total complexity will be similar to the classical methods to achieve the same degree of accuracy. Recognising this trade-off, recent research has focused on enhancing the efficiency of each subspace, by integrating techniques that "remember" past useful information. For instance, enhanced variants of SSD - such as those incorporating elements from stochastic variance reduction (SVRG) and trust-region frameworks - demonstrate how leveraging historical gradient information can effectively reduce noise and improve stability \cite{kozak2021stochastic, cartis2022randomised, dzahini2024stochastictrustregion}.

\subsection{Persistence of Memory.}
In this paper, we introduce the technique of "persistence of memory" to greatly extend and improve the random subspace methods. To wit, we leverage a vector that is only {\it weakly correlated} with the gradient in order to provide a  {\it guiding structure} to the generative process of the random subspace along which the descent is going to take place. This {\it guidance vector} (or {\it alignment vector}) may be fixed for a large number of iterations, only to be refreshed at wide intervals (on whose size we can provide guarantees in terms of problem parameters). In view of the persistence of the guidance vector throughout large cycles of the optimisation procedure, we call this approach {\it persistence of memory}.  
In fact, in significant setups such as the local regime described in Section \ref{sec:local}, the guidance vector may be fixed once and for all without any refresh being required. Thus, reinforcing the concept of persistence of structural memory in the random subspaces throughout the descent to the optimum.

In important machine learning setups, such as optimisation problems embodying sparsity or a minibatch structure,  the {\it guidance vector} can be obtained in a computationally inexpensive manner by leveraging the structured properties of the problem (refer to the applications in Section \ref{sec:applications} for examples). In unstructured scenarios, the guidance vector may be obtained from black-box gradient generation methods typically hypothesised in the optimisation literature; in fact, our results in the local regime show that our approach enjoys advantages even in such setups. 

Because the randomness is still retained in a major way in this approach, the method also enjoys the benefits due to randomness as the original method for a majority of the steps.

\subsubsection{Structural Overview of Our Method}
In this section, we provide a brief structural description of our method, in sufficient detail, so as to provide an overall discussion of our contributions. For a more in-depth description of the algorithm, we refer the reader to Sections \ref{sec:results}, and Section \ref{sec:applications} for estimating the {\it guidance vector} in structured setups.

To understand our approach, it is  beneficial to first recall the classical SSD algorithm. This consists of computing, at every iteration, the gradient of the objective function only along a randomly chosen subspace of dimension $d$. In practice, this can be accomplished, e.g. by picking at the $k$-th iteration (the projection $P_k$ on to) a uniformly random subspace of dimension $d$, usually denominated by its columns (in an orthogonal matrix representation). Then,  the projection of the gradient onto the random subspace can be easily computed in terms of the directional derivatives of the objective function along these columns. Typically, $d$ is taken to be low (so as to keep the cost per iteration small), but high enough to ensure that a reasonable representation of the gradient can still be obtained. By the famous Johnson-Lindenstrauss Lemma (and its derivatives), this can already be achieved when $d$ is of the order $\log n$, which is a ballpark scale for us to keep in mind throughout this paper. Of course, $d$ can also be taken to be somewhat larger, such as a small power of $n$, depending on the computational capacity available and other problem considerations.

If  the ambient dimension is $n$, then this costs $d/n$, a fraction in terms computational load compared to a full gradient computation. However, the randomness inherent in the SSD algorithm makes the iterative procedure take much longer to converge, more precisely, $O(n/d \cdot \epsilon^{-2})$ steps (where $\epsilon$ is the accuracy threshold for terminating the algorithm). Thus, what is gained in terms of computational cost {\it per iteration} is essentially lost in a vastly increased {\it number of iterations}, leaving the overall computational cost more or less unchanged.

Our method consists in a more nuanced random subspace generation mechanism that is sensitive to the problem at hand, thereby mitigating the problem of a large iteration complexity. At the same time, this achieves an overall computational advantage in  structural settings that are of fundamental importance in machine learning applications. For the purposes of discussion later, we define the notion of {\it alignment} between two vectors $u$ and $v$ to be $\frac{\langle u,v\rangle^2}{\|u\|_2^2 \|v\|^2_2}$. The alignment therefore ranges between 0 and 1, with alignment 1 indicating the vectors identifying the same straight line.

The cornerstone of our approach is a {\it guidance vector}, denoted by $v_k$ at the $k$-th step in the gradient decent algorithm. We require that this guidance vector is {\it weakly correlated} with the true gradient $\nabla f (x_k)$ at the $k$-th iterate $x_k$, in the sense that $\langle \hat v_k , \nabla f (x_k) \rangle^2 \ge \delta \norm{\gfk}_2^2$, where $\delta > 0$ is an alignment parameter and $\hat v_k$ is the normalised vector of $v_k$. Instead of generating a uniformly random subspace, we now generate a random subspace that is {\it conditioned to contain the guidance vector $v_k$}. In practice, this may be achieved simply by concatenating $v_k$ to a uniformly random subspace of dimension $d$ that is orthogonal to $v_k$. The random subspace generated will naturally carry much more information about the true gradient (which is the direction that we would ideally want to descend along).

Of course, obtaining such a guidance vector $v_k$ might incur a cost, especially if it is necessary to do it at every step. We therefore recommend generating the guidance vector only occasionally, a phenomenon that we call  {\it a refresh}. At any refresh step, the guidance vector will have an {\it initial alignment} $\gamma_0$ with the gradient, i.e. $\langle v_{\mathrm{refresh}}, \nabla f (x_{\mathrm{refresh}}) \rangle^2 = \gamma_0 \|  \nabla f (x_{\mathrm{refresh}}) \|_2^2$, which typically will much larger than the {\it alignment threshold} $\delta$. This vector $v_{\mathrm{refresh}}$ is going to be used as the guidance vector in the many subsequent steps of the descent (i.e., $v_k = v_{\mathrm{refresh}}$ until the next refresh step). The point is that, because the evolution of the argument $x_k$ in gradient descent is gradual, the  $v_{\mathrm{refresh}}$ will still have reasonably good alignment with the true gradient for many steps; in other words, we have {\it persistence of memory} of  $v_{\mathrm{refresh}}$ in the actual gradient. 

However, as the iteration proceeds, the alignment between the guidance vector and the true gradient worsens, starting from the initial alignment $\gamma_0$ but decaying towards the alignment threshold $\delta$. After a certain number of steps, the memory fades and we perform another refresh of the guidance vector (before the alignment falls below $\delta$). We continue in this vein until we reach the desired termination condition of our descent algorithm, which is typically enunciated in terms $\|  \nabla f (x)\|_2 \le \epsilon$.

We provide a quantitative and principled structure to the idea outlined above, with explicit guarantees on the iteration complexity and the number of iterations between two successive refresh steps, purely in terms of the parameters of the problem introduced above. For a detailed technical statement of the results, we refer to Theorem \ref{thm:conv_iter} and Corollary \ref{cor:ref_bound} respectively; for an overview of the results and discussion on their efficacy, we refer to the subsequent Sections \ref{sec:results} and \ref{sec:applications}.

\subsubsection{Iteration Complexity}
In Theorem \ref{thm:conv_iter}, we demonstrate that the iteration complexity of our method is 
\begin{equation}\label{eq:intro-iter}
    \frac{\alpha^{-1} \left(1- \frac{\alpha L}{2}\right)^{-1}(f(x_0) - f^\ast)}{\frac{4\alpha L (1-\alpha L)}{(1-2\alpha L)\log(\gamma_0/\tilde \delta)} \left(1 - \frac{d}{n-1}\right) \gamma_0 + \frac{d}{n-1} - \tilde t_0}\epsilon^{-2}
\end{equation}
where $\alpha$ is the step size, $L$ is the smoothness constant of the objective function $f$, $x_0$ is the initialisation, $f^\ast$ the true minimum, $\epsilon$ is the accuracy threshold for termination, $\gamma_0$ the starting alignment at every refresh, $\delta = \tilde \delta - \sqrt{\frac{d}{n-1}}$ the alignment lower bound that holds throughout the process and $\tilde t_0$ a function of the problem parameters which roughly equals $\frac{1}{2} \sqrt{\frac{d}{n}}$.

To illustrate our method clearly, we discuss the ballpark regime where we set the initial and threshold alignment parameters, namely $\gamma_0$ and $\delta$, to be $\Theta(1)$.  Given that the step-size $\alpha$ is typically (a small) constant times $1/L$ and $d \ll n$, we may then deduce from \eqref{eq:intro-iter} that the iteration complexity is $O(\epsilon^{-2})$. This, in particular, is a very substantial improvement from the classical SSD algorithm, which entails an iteration complexity of $O(n/d)$. With a common choice of $d$ being $\Theta(\log n)$, this accords us a speed-up almost by a factor of $n$ (up to $\log$ terms), which is very substantial in the high dimensional, large scale applications that these methods are designed for.

\subsubsection{Fast Generation of Weakly Correlated Vectors for Machine Learning Applications}
Functions with structure are of a wider interest to us and the wider community. In some problem settings (as described below), we show that by leveraging specific properties of the function, the computational cost of computing the {\it guidance vector} at each refresh step can be relatively cheap by sacrificing slightly on the initial alignment parameter. This trade-off can give us an overall decreased computational complexity.

\paragraph{Sparse Functions.}
The first class of such functions we test the algorithm on is when the function is sparse (i.e. the function depends only on a fixed subset of coordinates). Such functions are natural in machine learning, where some features might be highly correlated, resulting in redundant features and an intrinsic low dimensionality of the problem. We utilise well-established algorithms in the compressed sensing literature to construct the guidance vector in this setting. Suppose there exists a matrix $\Psi$ satisfying the Restricted Isometry Property (RIP)
\[
    (1-\delta_s) \norm*{z}^2 \leq \norm*{\Psi z}^2 \leq (1+\delta_s) \norm*{z}^2    
\]
for some constant $\delta_s$ and for all $s-$sparse vectors $z \in \mathbb{R}^n$, then many of these algorithms can find a vector that is close to the true sparse vector in $\ell_2$ norm, namely
\[
    \norm*{y_T - y^\ast} \leq \rho \norm*{y^\ast},
\]
where $y_T$ is the output of the algorithm after $T$ iterations, and $y^\ast$ is the true sparse vector. In particular, we analyse the Iterative Hard Thresholding (IHT) algorithm \cite{davies2009iht}, which has that $\rho$ decreases exponentially in $T$. This means that the cost to compute $v_0 = y_T$, $O(s\log(n))$ directional derivatives coupled with the cost of the IHT algorithm whcih scales with $s\log(n)\log(1/\rho)$, is substantially lower as compared to $O(n)$ many directional derivatives for the full gradient. For $\rho = \Theta(1)$, the logarithmic factor with $\rho$ is essentially order $1$. This leads to an overall decrease in the total computational cost compared to when the problem has no structure to $O(s\log(n/s) \epsilon^{-2} \xi)$.

En route our investigations, we also provide an analysis of classical SSD methods in the sparse setting; see Section \ref{sec:sparse_kozak}. To our knowledge, this is the first theoretical analysis of vanilla SSD in such a setup.

\paragraph{Additive Structure.}
Another class of functions that we analyse is functions that are a sum of functions, that is, $f(x) = \frac{1}{M} \sum_{i=1}^M f_i(x)$. Functions in this form are common in machine learning, such as evaluating the loss on the data set. Gradient descent is usually performed (in practice) using mini-batch as the direction, especially when $M \gg n$. We show in Section \ref{sec:mb_general} that the algorithm can also be applied to mini-batch gradient descent, with a slight tweak. For this scenario, one possible oracle of the alignment vector is to choose a mini-batch gradient of a larger batch size, namely let $B' = \{i_1, \cdots, i_{m'}\}$, where each $i's$ are iid sampled from $[M]$, then set
\[v_0 = \frac{1}{m'} \sum_{i \in B'} \nabla f_i(x_0),\]
where the batch size $m'$ can potentially be different from $m$. Under the bounded variance assumption and a specific rate of updating the guidance vector, we show that the iteration complexity scales as $O\left(\epsilon^{-4 \left(\frac{1+\beta}{1-\beta} \right)}\right)$, for some $\beta \in (0,1)$.

\paragraph{}
In both cases, the structure of the problem allows us to compute a guidance vector in the refresh steps at a cost that is much cheaper than the full gradient. At the same time, we also enjoy the benefits of the original SSD method, where the dimension per step we consider is vastly reduced.

We believe that the idea and concepts of persistence of memory can be fruitfully applied to wider range of stochastic optimisation.

\subsubsection{Computational Cost}
The computational overhead in our approach can be divided into two principal components -- one due to the random gradient sketch at every iteration, and the other due to the generation of the guidance vector at the refresh steps. 

The generation of the random subspace at every step is done as usual in an SSD-type approach, essentially via $d$ independent Gaussian random vectors (followed by certain orthogonal projections to get the desired structure of the subspace). We discuss the details of the generation of random subspace in Section \ref{sec:sub_alg}; here, we only observe that the cost of each random subspace generation is $O(nd^2)$, which is a similar order as the case of the classical SSD method.   

Once the random subspace is generated at a particular iteration, represented usually as an orthogonal basis, the gradient sketch along this subspace can be computed simply in terms of the directional derivatives along the rows of this matrix. If the computational cost of each directional derivative is $\xi$,  this yields a total cost of $O(nd^2 + d\xi)$ for each SSD step. 

From an algorithmic point of view, it might be even more inexpensive to use independent Gaussian random vectors orthogonal to the guidance vector in order to construct the random projection (as opposed to fully orthogonal columns). For a quick estimate, generation of $d$ such vectors is possible at $O(nd)$ cost (as opposed to $O(nd^2)$ for fully orthogonal vectors). In fact, independent Gaussian random vectors enjoy approximate orthogonality properties which would already suffice for our method to give effective results. However, in this paper we focus mostly on the orthogonal vectors setup for greater conceptual clarity and ease of presentation, leaving the analogous theoretical development of the independent Gaussian vectors alternative for another occasion. For the remaining part of the paper, we assume the cost of generating this matrix to be $O(nd)$, which only differs from that of the orthogonalised version by a factor of $d$, which we typically take to be orders smaller than $n$.

This brings us to the computational cost of the refresh steps. Applications in machine learning and statistics typically involve optimisation objectives that are highly structured. In this work, we investigate some of the most common structures widely present in such objectives -- namely, sparsity and mini-batch structure. We show that well-aligned guidance vectors in these settings can be computed in a fast and relatively inexpensive manner (in comparison to full gradient computations); see the discussion in Section \ref{sec:applications} for details. In the sparse setting with sparsity parameter $s$, this leads to an overall computational cost of $O(s\log(n) \xi \epsilon^{-2})$ (c.f. Theorem \ref{thm:cost_sparse}). In a mini-batch setting, this leads to a total cost of $O\left( \epsilon^{-4\frac{1+\beta}{1-\beta}} (nd + md\xi) + \epsilon^{\frac{-4}{1-\beta}} m'n\xi\right)$, where $m'$ is refresh batch size and may be taken as 
\[
m' > \frac{\gamma_0 \sigma^2}{(1 - \gamma_0)\| \nabla f(x) \|^2},
\]
where $\sigma^2$ is the variance of the stochastic gradient, $m$ is the mini-batch size in each iteration and $\gamma_0$ is the desired initial alignment. (For more details, we refer the reader to Section \ref{sec:mb_general}.) 

Specifically in the sparse setting, the overall computational cost may be seen to be smaller than standard SSD, which is $O(n\xi\epsilon^{-2})$. For a completely unstructured objective function, perhaps the only reliable way to compute a guidance vector would be compute a full gradient at the refresh steps. This cost may be estimated as $O(n\xi)$, which assumes the full gradient is computed via $n$ directional derivatives (c.f. Theorem \ref{thm:cost_vanilla}). It may be noted that even in this ``worst case scenario'' for our approach, the overall cost $O(n\xi \epsilon^{-2})$ is comparable to the standard SSD, and in turn to full space gradient descent.

\subsubsection{Parallelisation and Hardware Efficiency}
An important aspect of our approach is that it yields itself seamlessly to parallelisation techniques, which is a particularly salutary property in view of modern GPU-based computational architectures that are particularly strong in parallel computation. In particular, computing the gradient sketches at each (guided) SSD step consists of $d$ directional derivatives that can be computed independently. In conjunction with the fact that $d$ is typically small (e.g. $O(\log n)$), this leads to practically feasible parallelisation -- the entire sketch may fit comfortably within the parallel capacity of the accelerator unit used. Although the total arithmetic work associated with $d$ directional derivatives need not be equal to that of a single derivative evaluation, their wall-clock cost can be substantially reduced by parallel execution.

Low-dimensional sketches may also offer advantages from the perspective of the memory hierarchy. When the data and intermediate quantities required for the computation fit within the cache or fast on-device memory of the processor (say VRAM of the GPU), repeated transfers to slower levels of the memory hierarchy (say storage devices) can be reduced. Since data movement is an important cost on modern accelerator architectures, this cache locality provides a complementary hardware-level  motivation for keeping the optimisation computation low-dimensional.

In the case of the refresh steps, the computation of the guidance vector involves similar number of evaluations of independent directional derivatives. The number of such directional derivatives is larger than $d$, but as already noted, much smaller than the ambient dimension $n$ in structured settings. As such, the computation of guidance vectors in the refresh steps can also be easily parallelised, which leads to overall beneficial parallelisability properties of our approach.

\subsubsection{Interpolation Between Random Subspace and Full Gradient Descent}
Our method based on persistence of memory can be fruitfully viewed as an interpolation between classical SSD and full space gradient descent. A natural interpolation parameter is given by the initial alignment parameter $\gamma_0$ and the . If $\gamma_0=d/n$ (in the case where the guidance vector is a projection onto a uniform $d-$dimensional subspace), there is no concept of guidance to the random subspace and we are back to the setting of classical SSD. On the other hand, if $\gamma_0=\delta=1$, we are essentially operating with full, exact gradients at each iteration, and we are in the regime of full-space gradient descent.

Our approach focusses, in spirit, on the setting of $\gamma_0$ and $\delta$ being bounded away from both 0 and 1. The proximity to 0 and 1 of these parameters may be though of as modulating how close the method is to classical SSD and full-space GD respectively. In the intermediate regime of these parameters, which is where we work, we can profit from the beneficial aspects of both these standard methods. In structured settings such as those with sparsity and mini-batches, this is possible even at a computational advantage.

\subsubsection{Local Regime and Fast Alignment of Gradient Vectors}
In gradient descent for strongly convex functions, it is reasonably well-understood that if we start from a local neighbourhood of the optimum (in other words, a so-called {\it warm start}), then the normalised gradient $\nabla f(x_k) / \| \nabla f(x_k) \|$ converges to the smallest eigenvector of the Hessian of the objective at the true optimum. The essential reason for this is that, in such local regimes, the evolution of the gradient can be effectively captured by a power iteration involving this Hessian(see, eg, \cite{ortega2000iterate} for related discussions).

The persistence of memory approach to SSD does not lend itself to such a simple geometric recursion; indeed, the presence of the guidance vector significantly complicates the iterative behaviour even in the local regime. Leveraging a detailed and delicate analysis that traces the gradient descent dynamics in the local regime with a modified scaling and step size, we can demonstrate that the alignment between the normalised gradient and the minimum eigenvector of the Hessian still holds true. In this endeavour, we demonstrate that once we are in the local regime (ie, have a warm start), it suffices to fix the guidance vector once and for all. In other words, no refresh steps are necessary for the persistence of memory method to be effective in such a setting. 
We would like to point out that, in contrast, the classical SSD method does not appear to have such alignment properties in local regimes, which once again illustrates the benefits of our persistence of memory approach to SSD.

This makes our method in the setting of warm starts to be particularly attractive even for objective functions without any structure. Indeed, even in large dimensional optimisation problems, it is quite reasonable to justify the computation of the full gradient once and for all -- this will be made at the beginning of descent in the local regime, and will be repeatedly used in all iterations thereafter. For functions with structure, further computational savings may be obtained (bypassing the full gradient computation) in the presence of a salutary spectral properties of the Hessian at the optimum (such as a low-lying spectral gap) or sparsity in the function. The alignment holds with probability at least $1 - e^{-cd\tau^2}$, where $\tau \lesssim \lambda_2 - \lambda_1$. When the bottom gap $\lambda_2 - \lambda_1$ is $\Theta(1)$: for instance, when one feature has low variance while the remaining dimensions are well spread, as is common in representation learning, where the covariance of learned features often has one direction that is nearly degenerate while the rest of the representation remains well-conditioned. In such cases, $\tau$ need not decay with dimension, and the event still holds with high probability. When the function is sparse (i.e. $s$ grows as a small power of $n$ such as $s = n^\iota$), a cost of $O(s\log(n) \times (n\log(n) + \xi))$, as compared to $O(s\log(n) \times (n + \xi))$ to obtain a sufficiently good initial alignment $\gamma_0 = \Theta(1)$.
These considerations are discussed in detail in Section \ref{sec:local}. 

\subsection{Application Domains for Stochastic Subspaces}
The motivation for stochastic subspace methods is not simply the idea that the computation of a small number of directional derivatives can be cheaper than computing a full gradient. Indeed, reverse mode automatic differentiation (AD) has revolutionised gradient computations of differentiable function, by providing a principled way to evaluate the full gradient with an additional cost that is only a small constant multiple of the cost of evaluating the function \cite{baydin2018automaticdifferentiation,griewank2008evaluating}. Nevertheless, there are important settings where the arithmetic cost is not the principal bottleneck. In large-scale optimisation, the dominant cost may instead arise from memory, communication, differentiation through long computational trajectories, optimiser state or the absence of a differentiable gradient oracle. These settings provide natural applications for stochastic subspace methods.

\subsubsection{Memory Constrained Optimisation}
The computational efficiency of reverse mode AD comes with a potentially significant memory requirement. Intermediate quantities from the forward computation must be stored or recomputed during the backward sweep (c.~f.~the chain rule), implying that memory usage can become prohibitive for large or deeply nested computational graphs \cite{baydin2018automaticdifferentiation,griewank2008evaluating}. In contrast, directional derivatives $D_u f(x) = \inner*{\nabla f(x),u}$ can be propagated using forward mode AD without storing the complete reverse-mode tape. Consequently, when the subspace dimension $d$ is small, SSD trades the memory requirements of reverse differentiation for a small collection of forward directional computations. This distinction is particularly relevant in the training of large-models: MeZO demonstrated inference-level memory requirements (compared to training-level memory requirements) for fine-tuning of language-model through forward only computations \cite{malladi2023mezo}, while the more recent SubZero method explicitly uses random low-dimensional perturbation subspaces to improve zeroth-order LLM fine-tuning \cite{yu2025subzero}. Thus, even where full gradients are computationally efficient, subspace methods can enable optimisation when reverse-mode memory is the limiting resource.

\subsubsection{Communication-Constrained Optimisation}
In distributed and federated optimisation, communication can be substantially more expensive than local computations. A standard first-order method may require a worker to communicate vectors (gradient or update direction) in $\mathbb{R}^n$ in every round, with $n$ ranging from millions to billions of parameters in modern models. Suppose communicating parties instead share a subspace $P \in \mathbb{R}^{n \times d}$, such as through common pseudorandom seeds, only the $d$ projections need to be transmitted, reducing the communication cost from $O(n)$ to $O(d)$.

FedKSeed has demonstrated the potential scale of this reduction, via its  ability to perform federated full-parameter tuning of billion-parameter LLMs using random seeds and scalar directional information with less than 18KB bandwith in communication in their experiments \cite{qin2023fedkseed}. More recently, Ferret showed that the same principle can be coupled with local optimisation, by projecting local updates onto a low-dimensional random space before communication \cite{shu2024ferret}. This suggests that for $d \ll n$, substantial communication savings can compensate for the reduced information contained in each subspace update.

\subsubsection{Long-Horizon and Nested Optimisation}
Subspace methods are also attractive when differentiation must pass through a long inner computation, such as in bilevel optimisation, hyperparameter optimisation and meta-learning. Given an outer objective
\[
    \Phi(\lambda) = L_{\mathrm{out}}(x_T(\lambda)),
\]
reverse AD through the $T$ inner iterations may require storing, check pointing or reconstructing the optimisation trajectory $\{x_i(\lambda)\}_{i=1}^T$ \cite{franceschi2017hyperparameter}. Rather than maintaining the complete sensitivity $\frac{\partial x_t}{\partial \lambda}$, a subspace method can propagate only directional sensitivities $\frac{\partial x_t}{\partial \lambda}u_j$ for $j \in [d]$. When $d$ is small, these quantities can be computed sequentially through inner computations, providing a memory-efficient alternative to forming the full gradient on the hyperparameter. This is particularly relevant when both the number of outer variables and the length of the inner optimisation trajectory are large.

\subsubsection{Black-Box and Non-Differentiable Optimisation}
In many applications, the full gradient is not merely expensive, but potentially unavailable; for instance,  in simulation-based objectives, discrete performance metrics, physical experiments and propreitary machine learning models accessible only through inference interfaces. In these cases, directional finite differences
\[
    D_u f(x) \approx \frac{f(x + h u) - f(x- h u) }{2h}
\]
provide a natural mechanism for obtaining optimisation information within a low-dimensional subspace. Recent black-box prompt-tuning methods provide insight into this regime. E.g.,~ZOT performs zeroth-order prompt optimisation using only inference access \cite{zhan2024zot}, while ZIP uses a low-dimensional representation to reduce both the dimension dependence and query cost of zeroth-order prompt tuning \cite{park2025zip}. In such settings, SSD is not competing against an inexpensive full-gradient oracle; rather it provides an optimisation mechanism when such an oracle does not  even exist.

\subsubsection{Optimiser-State Memory}
Large-scale optimisation can also be limited by the auxiliary states needed to be maintained by adaptive optimisers. Methods like Adam store first and second moment estimates with dimension comparable to the parameter vector, introducing an additional $O(n)$ memory requirement. If useful updates are concentrated in a $d$-dimensional subspace, these statistics may indeed be maintained in a compressed representation. The recently proposed method GaLore demonstrates this principle by projecting layer-wise gradient into low-rank spaces to reduce optimiser state memory, while retaining full-parameter training \cite{zhao2024galore}. Although GaLore still computes the full backpropagated gradient, it illustrates the broader potential of subspace representations. An SSD method that directly computes only the required directional information, could in principle reduce both gradient related and optimiser-state memory.

\paragraph{}

Taken together, these examples illustrate that the principal advantage of stochastic subspace methods need not merely be a reduction in floating-point arithmetic. Instead, a low-dimensional optimisation interface can reduce the need to \emph{materialise, store, communicate or even access} full-dimensional derivative information. The resulting challenge is to then retain these computational advantages while selecting subspaces that contain sufficiently rich  information about descent directions, which is precisely the motivation for the guided subspace constructions considered in this work.

\subsection{Related Works}
\paragraph{Coordinate and Random Subspace First-Order Methods.} Coordinate descent replaces a full gradient step by updates along individual coordinates or blocks; its randomised complexity and sampling rules are developed by \cite{nesterov2012coordinate, qu2016arbitrarysampling}, and the survey of \cite{wright2015coordinate}. Greedy Gauss–Southwell selection can improve the rate when the extra selection cost is justified \cite{nutini2015gausssouthwell}. Beyond coordinate subspaces, \cite{kozak2021stochastic} study a stochastic
low-dimensional subspace method for settings in which gradients are not directly available, and \cite{kozak2023orthogonal} analyse zeroth-order optimisation with orthogonal random directions. These methods resample directions; they do not analyse reuse of a single historically informative direction.

\paragraph{Probabilistic Subspace Models.} Probabilistic-model analysis replaces the deterministic model accuracy by a conditional high-probability condition \cite{cartis2018probabilistic}. In a non-convex random subspace framework, \cite{cartis2022randomised} obtain high-probability $O(\epsilon^{-2})$ iteration complexity for safeguarded trust-region or quadratic regularisation schemes under a probabilistic subspace gradient condition. Related derivative-free trust-region analysis in random subspaces is given by \cite{dzahini2024stochastictrustregion}. These results are important comparators for the present alignment assumption, but their safeguards and oracle models differ from a fixed-step projected gradient update.

\paragraph{Gradient Sketches and Variance Reduction.} Another randomised first-order method SEGA builds a variance-reduced gradient estimate by accumulating random linear measurements of the gradient over time \cite{hanzely2018sega}. Its state is a reconstructed estimator, whereas the present method proposes to retain a single guidance direction and complete it by a fresh random orthogonal subspace. Randomised forward-mode gradient estimators based on directional derivatives are studied by \cite{shukla2023randomizedforward}; standard automatic-differentiation cost and memory distinctions are reviewed by \cite{baydin2018automaticdifferentiation, griewank2008evaluating}. Recent work also studies accelerated projected-gradient oracles \cite{omiya2026acceleratedsubspace} and adaptive low-rank subspaces for memory-efficient model training \cite{liang2024onlinesubspace, chen2025memoryefficientrso}.

\paragraph{Relation to Variance-Reduction Methods}
There is also a conceptual connection between our guided subspace construction and control-variate techniques in stochastic optimisation. Classical variance-reduced methods, such as SVRG and SAGA, exploit auxiliary gradient information that is correlated with a stochastic gradient estimator to reduce its variance while preserving the desired expectation \cite{johnson2013svrg,defazio2014saga}. Although our mechanism is different, where the guidance vector modifies the geometry of the sampled subspace rather than correcting a stochastic estimator, the underlying principle conceptually has a related flavour. Namely, auxiliary information correlated with the true gradient is used to improve the quality of the stochastic search direction. Exploring such potential connections is a natural direction  for future work.

\subsection{Structure and Notation of the Paper}
We define the initial point as $x_0$, and the solution space lies in $\mathbb{R}^n$. The lower dimension of the projection is denoted by $d$ (or in our case, $d+1$ as we consider a $d$-dimensional subspace concatenated to a fixed vector $v_k$). The alignment vector is denoted by $v_k$, whereby $\hat v_k$ denotes the normalised vector $v_k/\norm{v_k}$. When necessary, we will denote the normalised form of a vector $u$ by $\hat u$.

We use $L$ as the Lipschitz constant of the derivative of the function $f$, and $\alpha$ denotes the step-size (which may contain a subscript denoting the iteration when applicable). $\delta$ denotes the lower bound on the alignment we have and $\gamma_k$ represents a lower bound on the current alignment in iteration $k$. In the sections involving computational complexity, $\xi$ is assumed to be the cost of computing 1 directional derivative, and $\nu$, the cost of updating the alignment vector.

We present the paper as follows. In Section \ref{sec:results}, we describe the main algorithm and motivation behind it, followed by the main results on the computational cost of the algorithm under structured problems commonly found in machine learning tasks. We then take a look at how we only require to compute a well aligned vector "once and for all" under a local regime. In Section \ref{sec:applications}, we take a look at various oracles to find such a {\it guidance vector} and show that with a simple tweak to the algorithm, it can also be applied to functions of a finite sum structure. In Section \ref{sec:proof_ideas}, we briefly describe the proof techniques involved in showing the results in the previous 2 sections, which is then followed by some numerical demonstrations of the algorithm in Section \ref{sec:experiments}.
\section{Main Results}\label{sec:results}
\subsection{Preliminaries}
We are working under the standard assumption that the function is $L$-smooth, which is the same as saying that the gradient of the function is $L-$Lipschitz. Here, we provide the definition of such functions.
\begin{defn}[$L$-Smoothness]
    A function $f: \mathbb{R}^n \rightarrow \mathbb{R}$ has $L$-Lipschitz gradient if $\exists L > 0$ such that $\forall x, y \in \mathbb{R}^n$, we have
    \begin{equation}\label{eqn:l-smooth-1}
        \|\nabla f(x) - \nabla f(y)\| \leq L \|x - y\|.
    \end{equation}
\end{defn}
As a consequence, for any $x,y \in \mathbb{R}^n$, we have the following inequality:
\begin{equation*}
    f(y) \leq f(x) + \nabla f(x)^\top (y-x) + \frac{L}{2} \|x-y\|^2.
\end{equation*}
In particular, letting $x = x_{k-1}$ and $y = x_k$ gives us
\begin{equation}\label{eqn:lip_ineq}
    f(x_{k}) \leq f(x_{k-1}) + \langle \nabla f(x_{k-1}), x_{k} - x_{k-1} \rangle + \frac{L}{2} \|x_k - x_{k-1}\|^2
\end{equation}
which will be an inequality used frequently in the later proofs.

\subsection{Algorithm}\label{sec:sub_alg}
We will now provide the conditions under which our method converges to a point $x_N$ such that $\min_{k \in [N]} \mathbb{E} \left[ \|\nabla f(x_k)\|^2 \right] < \epsilon^2$. Let $f : \RR^n \rightarrow \RR$ be a $L-$smooth function. At each iteration, the point is updated as
\begin{equation}\label{eqn:update_step}
    x_{k} = x_{k-1} - \alpha P_{k-1}P_{k-1}^\top \nabla f(x_{k-1})
\end{equation}
where $P_{k-1} \in \RR^{n \times (d+1)}$ $(d \ll n)$ is a matrix of the form
\begin{equation}
    P_{k-1} = \begin{pmatrix}
      \hat{v}_{k-1} &  \tilde{P}_{k-1}
    \end{pmatrix},
\end{equation}
and $\Tilde{P}_{k-1}$ is a random $n \times d$ matrix on $v_{k-1}^{\perp}$ such that
\begin{equation}\label{eqn:randmat}
    \Tilde{P}_{k-1}^\top\Tilde{P}_{k-1} = I_d \qquad \& \qquad \EE \left[ \Tilde{P}_{k-1}\Tilde{P}_{k-1}^\top \right] = \frac{d}{n-1}\left(I_n - \hat{v}_{k-1} \hat{v}_{k-1}^\top\right).
\end{equation}

\begin{algorithm}[!h]
	\caption{SSD with Persistence of Memory (SSDPM)}\label{Alg: SSD}
	\begin{algorithmic}[1]
		\State{\textbf{Inputs:} $\alpha, d, \delta, \gamma_0$ \Comment{step size, subspace rank, alignment threshold, initial alignment}}
		\State{\textbf{Initialize:} $x_0$ \Comment{arbitrary initialization}}
        \State $j \gets 0$
        \State $\tilde \delta \gets \delta + \sqrt{\frac{d}{n-1}}$
        \State $r \gets \frac{(1-2\alpha L)}{4 \alpha L (1-\alpha L)}\log \left( \gamma_0/\tilde \delta \right)$ \Comment{Theoretical upper bound on re-uses for alignment vector}
		\For {$k = 1, 2, \ldots$}
        \If{$j = 0$}
            \State Generate a new $v_{k-1}$ as a guiding descent direction
            \State $\hat{v}_{k-1} \gets v_{k-1} / \|v_{k-1}\|$
        \Else
            \State $v_{k-1} \gets v_{k-2}$
        \EndIf
		\State Generate $\tilde{P}_{k-1}\in \RR^{n \times d}$ orthogonal to $v_{k-1}$
        \State $P_{k-1} \gets \begin{pmatrix}
            \hat{v}_{k-1} & \tilde{P}_{k-1}
        \end{pmatrix}$
		\State $x_{k} \gets x_{k-1} - \alpha P_{k-1} P_{k-1}^\top \nabla f(x_{k-1})$
        \State $j \gets (j+1) \bmod r$
		\EndFor
		\end{algorithmic}
\end{algorithm}
The key difference between our algorithm and the original SSD algorithm is in the generation of the guidance vector $v_k$ at each step. With this algorithm, we will then present the main theorem arising from this algorithm. The initial alignment $\gamma_0$ depends on the way the guidance vector is generated, which we will see 2 examples in the later sections.

\subsubsection{Discussion on Algorithm}
In the original SSD algorithm proposed by \cite{kozak2021stochastic}, the iterates are updated in a random projected direction of the gradient at that point. A $d-$dimensional subspace is randomly chosen and the gradient will be projected onto this subspace, whereby the iterates will then move in accordance to this direction. In comparison, our algorithm appends this random subspace with an additional {\it alignment vector} indicated with $v_k$, which has a correlation with the gradient of at least $\delta$ in expectation. More concretely, we have $\mathbb{E}[\inner*{\hat v_k, \nabla f(x_k)}^2] \geq \delta \norm*{\nabla f(x_k)}^2$, where $\hat v_k$ represents the normalised version of the vector $v_k$. Doing so ensures that the projected gradient is of size $(\delta + \frac{d}{n-1}) \norm*{\nabla f(x_k)}^2$ as compared to $\frac{d}{n} \norm*{\nabla f(x_k)}^2$ in the case of the original algorithm by \cite{kozak2021stochastic}. At the same time, to ensure that the matrix remains a projection, the random part of the projection will be sampled randomly from $(I - \hat v_k \hat v_k)$ instead. We denote this random matrix as $\tilde P_k$, and the projection matrix we consider will be
\begin{equation}
    P_k P_k^\top = \hat v_k \hat v_k^\top + \tilde P_k \tilde P_k^\top, \qquad P_k = \begin{pmatrix}
        \hat v_k & \tilde P_k
    \end{pmatrix}.
\end{equation}

Next, we will re-use this {\it alignment vector} for $r$ steps, until the correlation with the gradient falls below $\delta$. In practice, this is a variable which can be tuned by the practitioner, but our analysis in Corollary \ref{cor:ref_bound} (upper bound on reuse) gives us a theoretical upper bound on $r$ given the starting correlation (the correlation of the alignment vector with the gradient at the iteration where the alignment vector is first used). In the most vanilla case, we can simply choose $v_k = \nabla f(x_k)$, and the initial alignment will be $1$.

We would also like to note that the computation of the projected gradient can be much cheaper than the gradient, by first computing the $d$ directional derivatives of the gradient with respect to $P_k$. i.e. $(P_k^\top \nabla f(x_k))_j = (P_k)_{\cdot, j}^\top \nabla f(x_k)$, and then doing a matrix vector multiplication with the matrix $P_k$.

\subsection{Guarantees for Iteration and Computational Complexities}
Our main contribution is that the algorithm enjoys an improved iteration complexity as compared to the original SSD algorithm. Given an oracle that produces an alignment vector $v_0$ satisfying
\[
    \EE \left[ \inner*{\hat{v}_{0}, \nabla f(x_{0})}^2 \mid x_0 \right] 
        \geq \gamma_0 \norm*{\nabla f(x_{0})}^2
\]
for the initial alignment, while also satisfying
\begin{equation}\label{eqn:alignment_parameter}
    \EE \left[ \inner*{\hat{v}_{k-1}, \nabla f(x_{k-1})}^2 \mid x_0 \right] 
        \geq \gamma_{k-1} \EE \left[ \norm*{\nabla f(x_{k-1})}^2 \mid x_0 \right]
\end{equation}
for consequent steps with $\gamma_{k-1} \geq \delta$ (in other words, conditional on the iterate where the alignment vector is updated, consequent alignments are lower bounded by $\gamma_{k-1}$), we obtain rates comparable to the standard GD algorithm which does not have a factor of $\frac{n}{d}$ like the SSD algorithm. More precisely, we have the following theorem.
\begin{thm}[Iteration Complexity]\label{thm:conv_iter}
Let the alignment vector $v_0$ satisfy $\EE \left[ \inner*{\hat{v}_{0}, \nabla f(x_{0})}^2 \mid x_0 \right] \geq \gamma_{0} \|\nabla f(x_{0})\|^2$ with $\gamma_0 > \delta + \sqrt{d/n}$ when updated. Then the number of iterations to obtain a solution satisfying $\min_i \mathbb{E} \left[ \norm*{\nabla f(x_i)}^2 \right] < \epsilon^2$ is at least
\begin{equation}\label{eqn:conv_iter}
    N \geq \frac{\alpha^{-1} \left(1- \frac{\alpha L}{2}\right)^{-1}(f(x_0) - f^\ast)}{\frac{4\alpha L (1-\alpha L)}{(1-2\alpha L)\log(\gamma_0/\tilde \delta)} \left(1 - \frac{d}{n-1}\right) \gamma_0 + \frac{d}{n-1} - \tilde t_0}\epsilon^{-2},
\end{equation}
where $\tilde t_0 = t_0 \left(1 - \frac{d}{n-1}\right) \sqrt{\frac{d}{n-1}} \left(1 - \frac{4\alpha L(1-\alpha L)}{(1-2\alpha L)\log(\gamma_0/\tilde \delta)}\right)$ and $t_0 \in (4/9, 1/2 + O(d/n))$. $\tilde \delta = \delta + \sqrt{\frac{d}{n-1}}$ and $\alpha$ is the step size to be chosen satisfying
\begin{equation}\label{eqn:iter_step_bound}
   \frac{1}{L} \cdot \frac{K}{2 + K + \sqrt{4 + K^2}} < \alpha \leq \frac{1}{2L}, \quad K = \log\left(\frac{\gamma_0}{\tilde\delta}\right)\cdot
\frac{t_0\left(1-\frac{d}{n-1}\right)\sqrt{\frac{d}{n-1}}-\frac{d}{n-1}}
{\left(1-\frac{d}{n-1}\right)\left(\gamma_0+t_0\sqrt{\frac{d}{n-1}}\right)}.
\end{equation}
\end{thm}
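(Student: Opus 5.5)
The plan is to combine the standard descent-lemma telescoping argument with a per-step lower bound on the expected squared norm of the projected gradient, and then to average this bound over one refresh cycle of length $r$.

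\textbf{Step 1: one-step descent.} Plug the update \eqref{eqn:update_step} into \eqref{eqn:lip_ineq}. Since $P_{k-1}P_{k-1}^\top$ is an orthogonal projection (because $\tilde P_{k-1}\perp \hat v_{k-1}$ and $\tilde P_{k-1}^\top\tilde P_{k-1}=I_d$), we get
\[
f(x_k)\le f(x_{k-1})-\alpha\Bigl(1-\tfrac{\alpha L}{2}\Bigr)\norm{P_{k-1}^\top\nabla f(x_{k-1})}^2 .
\]
We then take the conditional expectation over $\tilde P_{k-1}$ and use \eqref{eqn:randmat}:
\[
\EE\norm{P_{k-1}^\top \nabla f(x_{k-1})}^2=\inner{\hat v_{k-1},\nabla f(x_{k-1})}^2+\tfrac{d}{n-1}\Bigl(\norm{\nabla f(x_{k-1})}^2-\inner{\hat v_{k-1},\nabla f(x_{k-1})}^2\Bigr),
\]
which equals $\bigl[(1-\tfrac{d}{n-1})a_{k-1}+\tfrac{d}{n-1}\bigr]\norm{\nabla f(x_{k-1})}^2$, where $a_{k-1}$ is the alignment. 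Taking total expectations and using \eqref{eqn:alignment_parameter}, the expected decrease at step $k$ is at least $\alpha(1-\tfrac{\alpha L}{2})\bigl[(1-\tfrac{d}{n-1})\gamma_{k-1}+\tfrac{d}{n-1}\bigr]\EE\norm{\nabla f(x_{k-1})}^2$.

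\textbf{Step 2: decay of alignment within a cycle.} This is the main obstacle. We need to track how $\gamma_k$ evolves while $\hat v$ is held fixed. First, bound $\nabla f(x_k)$ in terms of $\nabla f(x_{k-1})$ using $L$-smoothness, via $\norm{\nabla f(x_k)-\nabla f(x_{k-1})}\le \alpha L\norm{P P^\top\nabla f(x_{k-1})}$. Then expand $\inner{\hat v,\nabla f(x_k)}^2$ and $\norm{\nabla f(x_k)}^2$, aiming for a multiplicative recursion of the form
\[
\gamma_k \ge \gamma_{k-1}\exp\!\Bigl(-\tfrac{4\alpha L(1-\alpha L)}{1-2\alpha L}\Bigr)
\]
up to an additive error of order $\sqrt{d/(n-1)}$. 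This error comes from the random component $\tilde P\tilde P^\top\nabla f$, whose contribution concentrates at scale $\sqrt{d/n}$. This is where the shift $\tilde\delta=\delta+\sqrt{d/(n-1)}$ and the constant $t_0$ arise; I would try to pin $t_0$ down via the expectation of $\sqrt{\cdot}$ of a Beta$(d/2,(n-1-d)/2)$ variable. Iterating the recursion gives $\gamma_k\ge \gamma_0 e^{-ck}$, and this stays above $\tilde\delta$ for $k\le r=\frac{1-2\alpha L}{4\alpha L(1-\alpha L)}\log(\gamma_0/\tilde\delta)$, which is exactly the reuse bound of the algorithm. The condition $\alpha\le 1/(2L)$ keeps $1-2\alpha L\ge0$.

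\textbf{Step 3: averaging and telescoping.} Summing Step 1 over $k=1,\dots,N$ and bounding $\EE\norm{\nabla f(x_{k-1})}^2\ge \epsilon^2$ (otherwise we are done), we telescope to
\[
f(x_0)-f^\ast\ge \alpha\Bigl(1-\tfrac{\alpha L}{2}\Bigr)\epsilon^2\sum_k\Bigl[\bigl(1-\tfrac{d}{n-1}\bigr)\gamma_{k-1}+\tfrac{d}{n-1}-(\text{error})\Bigr].
\]
Within each cycle, we lower bound the average of $\gamma_k$ crudely: the first step contributes $\gamma_0$ and the remaining ones contribute at least the error-shifted floor. Dividing by $r$ gives the term $\frac{1}{r}(1-\frac{d}{n-1})\gamma_0$, and the $\sqrt{d/(n-1)}$ errors collect into $\tilde t_0$ with the factor $(1-1/r)$. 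Rearranging for $N$ yields \eqref{eqn:conv_iter}. Finally, the lower bound on $\alpha$ in \eqref{eqn:iter_step_bound} is exactly what is needed to make the denominator positive. To verify this, we write the positivity condition as a quadratic inequality in $\alpha L$ and solve it, which should produce $K/(2+K+\sqrt{4+K^2})$.
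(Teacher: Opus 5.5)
Your outline coincides with the paper's proof everywhere except at its core. The paper also uses:
the one-step descent via $\EE[P_{k-1}P_{k-1}^\top\mid\mathcal F_{k-1}]=(1-\tfrac{d}{n-1})\hat v_{k-1}\hat v_{k-1}^\top+\tfrac{d}{n-1}I_n$;
the reuse length $r$;
crediting only the first step of each cycle with $\gamma_0$;
the count $\kappa\ge N/r$ of refreshes;
and the positivity of the denominator read as the quadratic $4x^2-(4+2K)x+K<0$ in $x=\alpha L$, whose smaller root is indeed $K/(2+K+\sqrt{4+K^2})$.

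The gap is Step~2, which you only state as an aim, and the mechanism you propose there for $t_0$ is not the one that produces it. Write $g_k=\nabla f(x_k)$ and $\hat v$ for the frozen guidance direction. The paper works purely with conditional expectations and uses no concentration:
it expands $\langle\hat v,g_k\rangle^2\ge\langle\hat v,g_{k-1}\rangle^2-2L\|x_k-x_{k-1}\|\,|\langle\hat v,g_{k-1}\rangle|$;
it bounds $\EE[\|x_k-x_{k-1}\|\mid\mathcal F_{k-1}]\le\alpha\bigl(\sqrt{1-\tfrac{d}{n-1}}\,|\langle\hat v,g_{k-1}\rangle|+\sqrt{\tfrac{d}{n-1}}\,\|g_{k-1}\|\bigr)$ by Jensen and $\sqrt{x+y}\le\sqrt{x}+\sqrt{y}$;
it uses $|\langle\hat v,g_{k-1}\rangle|\,\|g_{k-1}\|\le\|g_{k-1}\|^2$;
and it converts back to $\EE\|g_k\|^2$ via $\EE\|g_k\|^2\le(1+\alpha L)^2\,\EE\|g_{k-1}\|^2$ (Lemma~\ref{lem:grad_consec}).
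This gives the affine recursion $\gamma_k=a\gamma_{k-1}-b$ of Lemma~\ref{lem:consec_gamma}, with $a=\frac{1-2\alpha L\sqrt{1-d/(n-1)}}{(1+\alpha L)^2}$ and $b=\frac{2\alpha L\sqrt{d/(n-1)}}{(1+\alpha L)^2}$. Its exact solution is $\gamma_i=a^i\gamma_0-\frac{b}{1-a}(1-a^i)$.

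The constant $t_0$ is then simply defined by $\frac{b}{1-a}=t_0\sqrt{\tfrac{d}{n-1}}$, i.e.\ $t_0=\bigl(\tfrac{\alpha L}{2}+1+\sqrt{1-\tfrac{d}{n-1}}\bigr)^{-1}$. The range $(4/9,\tfrac12+O(d/n))$ follows from $0<\alpha L\le\tfrac12$. Its value near $\tfrac12$ is, to leading order, the ratio of the per-step additive loss $b\approx2\alpha L\sqrt{d/(n-1)}$ to the per-step contraction gap $1-a\approx4\alpha L$; in other words, it is the sum $b\sum_{j\ge0}a^j$ of a geometric series.

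A moment of $B=\|\tilde P^\top w\|^2\sim\mathrm{Beta}(\tfrac d2,\tfrac{n-1-d}{2})$, for a unit $w\perp\hat v$, can at most sharpen the Jensen factor inside $b$. The ratio $\EE\sqrt{B}/\sqrt{d/(n-1)}$ lies between about $0.8$ and $1$, so it cannot by itself yield a constant in the stated range. Identifying $t_0$ with such a moment would give the wrong $\tilde t_0$ and the wrong $K$.

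With the exact solution in hand, Step~3 becomes precise. Using $1-a^{r_k}\ge1-a$ and $a^{r_k}\le a$, one gets $\sum_{i=0}^{r_k-1}\gamma_i\ge\gamma_0-t_0\sqrt{\tfrac{d}{n-1}}\,(r_k-1)$. This is exactly your ``first step gives $\gamma_0$, every other step is charged $t_0\sqrt{d/(n-1)}$''.

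Likewise, the shift in $\tilde\delta$ comes from the cruder bound $\frac{b}{1-a}\le\sqrt{\tfrac{d}{n-1}}$. The rate comes from $a\ge\frac{1-2\alpha L}{(1+\alpha L)^2}$ together with $\log\frac{(1+\alpha L)^2}{1-2\alpha L}\le\frac{4\alpha L(1-\alpha L)}{1-2\alpha L}$ (Corollary~\ref{cor:ref_bound}). Note that the recursion gives $\gamma_k\ge\gamma_0e^{-ck}-\sqrt{d/(n-1)}$, not $\gamma_k\ge\gamma_0e^{-ck}$ as you wrote. It is the former that is compared with $\delta$ to obtain $r$.
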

\begin{rem}
    The lower bound on the step size is natural, considering that the numerator has a factor $\alpha^{-1}$. If step size is taken to be too small, say order $d/n$, then effectively we get Kozak's rate. Simplifying expression \eqref{eqn:iter_step_bound}, we have
    \[
        \alpha > \alpha^- \approx \frac{K}{4L} \approx \frac{\log(\gamma_0/\tilde \delta)}{9L\gamma_0} \sqrt{\frac{d}{n-1}},
    \]
    which is much larger than the step size of $d/2nL$ used in Kozak's algorithm.
\end{rem}

Observe that in the denominator, apart from a $d/n$ factor, we have an additional term approximately $\frac{4 \alpha L (1-\alpha L)\gamma_0}{(1-2\alpha L)\log(\gamma_0/\tilde \delta)}$. When $d \ll n$, the original SSD suffers from the curse of dimensionality due to the $d/n$ factor. In comparison, by asserting $\gamma_0 \geq \delta = \Theta(1)$ and having $\alpha$ not too small (which can be realised by the upper bound), the denominator is of order $\Theta(1)$, simplifying the iteration complexity above to $O(\epsilon^{-2})$. This is comparable to the standard gradient descent algorithm.
Additionally, $\delta$ controls the extent of how much randomness we want to incorporate into the algorithm; when $\delta \rightarrow \frac{d}{n}$, we recover the original SSD algorithm, where $v_k$ can be thought of the projection of the gradient onto a $d-$dimensional subspace in $\mathbb{R}^n$. On the other hand, when $\delta \rightarrow 1$, we recover the standard gradient descent, where at each iteration, the projected gradient carries the full gradient information $v_k = \nabla f(x_k)$. More specifically, if $\nabla f(x_0) = v_0$, then $\gamma_0 = 1$. On the other hand, if $v_0$ is a random vector in $\mathbb{R}^n$, then we have $\gamma_0 = 1/n$. More discussions of how to choose $v_0$ under specific structure of $f$ will be covered in Section \ref{sec:applications}.

Next, we present the overall computational complexity of the algorithm in the general setting, where there is no additional known structure of the objective function beyond $L$-smoothness. 
\begin{thm}[Computational Complexity]\label{thm:cost_vanilla}
Let $\xi$ be the cost for computing a directional derivative and $\nu$ be the cost of a refresh. Then, the computational cost for computing $N$ steps denoted by Equation \ref{eqn:conv_iter} is of the order
\begin{equation}\label{eqn:cost_vanilla}
    O\left( \epsilon^{-2} \gamma_0^{-1} \left[ d\log\left(\frac{\gamma_0}{\delta}\right) \left( \xi + n \right) + \nu \right] \right).
\end{equation}
\end{thm}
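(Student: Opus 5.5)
The plan is to multiply the iteration count of Theorem \ref{thm:conv_iter} by a per-iteration cost, and then add the cost of the refreshes. There are two kinds of cost.

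\emph{Per-iteration cost.} Each step of Algorithm \ref{Alg: SSD} does three things:
\begin{enumerate}
\item It generates $\tilde P_{k-1}$ orthogonal to $\hat v_{k-1}$. Following the convention of the introduction, this costs $O(nd)$: draw $d$ Gaussian vectors and project each onto $v_{k-1}^\perp$, each projection costing $O(n)$.
\item It computes the $d+1$ directional derivatives $P_{k-1}^\top \nabla f(x_{k-1})$, at cost $O(d\xi)$.
\item It forms $P_{k-1}(P_{k-1}^\top\nabla f(x_{k-1}))$, a matrix--vector product costing $O(nd)$.
\end{enumerate}
So each step costs $O(d(\xi+n))$.

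\emph{Refresh cost.} A refresh costs $\nu$ and occurs once every $r$ iterations, with $r=\frac{(1-2\alpha L)}{4\alpha L(1-\alpha L)}\log(\gamma_0/\tilde\delta)$ as in the algorithm. This gives an amortised cost of $\nu/r$ per step. The total is therefore
\[
O\!\left(N\left[d(\xi+n)+\frac{\nu}{r}\right]\right).
\]

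\emph{Simplifying $N$.} We treat $\alpha$ as a fixed constant multiple of $1/L$ within the range \eqref{eqn:iter_step_bound}, so that $\alpha L=\Theta(1)$ and $\frac{4\alpha L(1-\alpha L)}{1-2\alpha L}=\Theta(1)$. The numerator of \eqref{eqn:conv_iter} is then $O(L(f(x_0)-f^\ast))=O(1)$. For the denominator, the leading term is $\Theta\big(\gamma_0/\log(\gamma_0/\tilde\delta)\big)$, using $1-\frac{d}{n-1}=\Theta(1)$. The key step is to show that the correction $-\tilde t_0+\frac{d}{n-1}$ does not destroy this order. The lower bound on $\alpha$ in \eqref{eqn:iter_step_bound} is designed exactly for this. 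Substituting the definition of $K$ shows that the step-size condition is equivalent to the denominator being positive, and with some slack it is at least a constant fraction of the leading term. In practice one argues that $\tilde t_0\approx\frac12\sqrt{d/n}$ is dominated by $\gamma_0/\log(\gamma_0/\tilde\delta)$ when $\gamma_0$ is bounded below by $\tilde\delta>\sqrt{d/(n-1)}$.

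Granting this, $N=O\big(\epsilon^{-2}\gamma_0^{-1}\log(\gamma_0/\tilde\delta)\big)$. Since $\tilde\delta\ge\delta$, we may replace $\log(\gamma_0/\tilde\delta)$ by the upper bound $\log(\gamma_0/\delta)$.

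\emph{Combining.} The per-step term gives
\[
\epsilon^{-2}\gamma_0^{-1}\,d\log(\gamma_0/\delta)(\xi+n).
\]
For the refresh term, $N/r$ cancels the logarithm: since $1/r=\Theta(1/\log(\gamma_0/\tilde\delta))$, we get $N\nu/r=O(\epsilon^{-2}\gamma_0^{-1}\nu)$. Adding the two terms gives \eqref{eqn:cost_vanilla}.

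\emph{Main obstacle.} The delicate part is rigorously lower-bounding the denominator of \eqref{eqn:conv_iter} by a constant times $\gamma_0/\log(\gamma_0/\tilde\delta)$. This means unpacking $K$ and the admissible range of $\alpha$, and checking that the subtraction of $\tilde t_0$ costs at most a constant factor. One must also handle $r$ being rounded to an integer, which affects only constants as long as $r\ge1$.
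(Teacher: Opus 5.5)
Your proposal is correct and is essentially the paper's proof. The paper also bounds the cost by $N(nd+d\xi)+\frac{N}{r}(nd+\nu)$ with $N$ from \eqref{eqn:conv_iter}, multiplies through by $r$ to get the denominator $(1-\frac{d}{n-1})\gamma_0+r\frac{d}{n-1}-t_0(1-\frac{d}{n-1})\sqrt{\frac{d}{n-1}}\,(r-1)$, treats it as order $\gamma_0$ without comment, and substitutes $r=O(\log(\gamma_0/\delta))$. The step you flag as the main obstacle is exactly the one the paper skips, and your sketch does close it: since $x/\log x\ge e$ for $x>1$, $\gamma_0/\log(\gamma_0/\tilde\delta)\ge e\tilde\delta>e\sqrt{d/(n-1)}$, so the denominator is at least a fixed fraction of $\gamma_0/r$ once $\frac{4\alpha L(1-\alpha L)}{1-2\alpha L}$ exceeds $t_0/e$ by a constant factor (this uniform condition already implies the lower bound in \eqref{eqn:iter_step_bound}, because $K<t_0/e$).
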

In particular, if we do not have a good way to obtain a refresh, and simply assume that we use $n$ directional derivatives to compute the gradient in the elementary coordinate space, we regardless retrieve the cost of the standard GD algorithm. If $\xi = O(n)$, we can simplify Equation \ref{eqn:cost_vanilla} to $O(\epsilon^{-2} n \xi)$. We note that the focus of our algorithm is in the special cases where the function has an underlying structure which allows for a guidance vector to be obtained much cheaper. We first present their computational complexities here, before going into details in Section \ref{sec:applications}.

Suppose the objective function is $s$-sparse, meaning that it only depends on $s$ of the coordinates of the input space, we have an improved complexity contributed by the reduced cost of a refresh step in comparison to computing a full gradient.
\begin{thm}[Sparse Computational Complexity]\label{thm:cost_sparse}
Let the objective function be $s$-sparse and the cost to compute a directional derivative to be $\xi$. The expected computational cost for the algorithm to produce $\min_{k} \mathbb{E} \left[ \norm*{\nabla f(x_k)}^2 \right] \leq \epsilon^2$ is given by
\begin{equation}\label{eqn:comp_cost_sparse}
    O\left( \frac{\epsilon^{-2}}{\gamma_0} \left[ \left( d\log(\gamma_0/\delta) + k' \log \left(\frac{1}{1-\gamma_0}\right) \right) \times n + \left( d\log(\gamma_0/\delta) + k' \right) \times \xi \right] \right)
\end{equation}
where $k' \gtrsim s\log(n/s)$ and $\gamma_0$ is the initial alignment desired.
\end{thm}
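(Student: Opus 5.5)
The plan is to reduce Theorem~\ref{thm:cost_sparse} to a cost accounting on top of Theorem~\ref{thm:conv_iter} and Theorem~\ref{thm:cost_vanilla}. The only structural change from the vanilla setting is the refresh cost $\nu$, which we bound using a compressed sensing oracle (IHT) adapted to the $s$-sparse gradient. Two ingredients are needed.

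\textbf{Counting refreshes and guided steps.} First, as in Theorem~\ref{thm:cost_vanilla}, I take $\alpha=\Theta(1/L)$ in the admissible range \eqref{eqn:iter_step_bound} and $d\ll n$. Then the denominator in \eqref{eqn:conv_iter} is of order $\gamma_0/\log(\gamma_0/\tilde\delta)$, up to the small $d/n$ and $\tilde t_0$ corrections, so
\[
N=O\bigl(\epsilon^{-2}\gamma_0^{-1}\log(\gamma_0/\delta)\bigr).
\]
By the refresh rule of Algorithm~\ref{Alg: SSD}, a refresh occurs every $r=\Theta(\log(\gamma_0/\tilde\delta))$ steps. Hence the number of refreshes is $N/r=O(\epsilon^{-2}\gamma_0^{-1})$. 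Each ordinary step costs $O(nd)$ to generate $\tilde P_k$ and $O(d\xi)$ for the directional derivatives. This already gives the terms $\epsilon^{-2}\gamma_0^{-1}d\log(\gamma_0/\delta)(n+\xi)$.

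\textbf{Bounding the refresh cost $\nu$.} The gradient $\nabla f(x)$ is supported on the fixed $s$-set of active coordinates. I will draw a Gaussian (or subgaussian) measurement matrix $\Psi\in\RR^{k'\times n}$ with $k'\gtrsim s\log(n/s)$ rows, so that RIP of order $2s$ or $3s$ holds with high probability. The measurements $\Psi\nabla f(x)$ cost $k'$ directional derivatives, i.e.\ $k'\xi$.

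I then run IHT for $T$ iterations. Each iteration costs a multiplication by $\Psi$ and $\Psi^\top$, which is $O(k'n)$, plus a thresholding step of cost $O(n)$. The standard IHT guarantee gives $\|y_T-\nabla f(x)\|\le\rho\|\nabla f(x)\|$ with $\rho=c^T$ for some $c<1$.

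Next I convert this to alignment. Write $y_T=g+e$ with $g=\nabla f(x)$ and $\|e\|\le\rho\|g\|$. Then
\[
\frac{\langle \hat y_T,g\rangle^2}{\|g\|^2}\ \ge\ 1-\rho^2 .
\]
This holds because the sine of the angle between $y_T$ and $g$ is at most $\|e\|/\|g\|$. Achieving initial alignment $\gamma_0$ therefore needs $\rho^2\le 1-\gamma_0$, i.e.
\[
T=O\bigl(\log(1/(1-\gamma_0))\bigr).
\]
This yields $\nu=O\bigl(k'\xi+k'n\log\tfrac{1}{1-\gamma_0}\bigr)$. Substituting into $\epsilon^{-2}\gamma_0^{-1}[\,\cdot+\nu\,]$ and grouping the $n$ and $\xi$ terms gives exactly \eqref{eqn:comp_cost_sparse}.

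\textbf{Main obstacle.} The hardest step is making the oracle's guarantee compatible with the hypotheses of Theorem~\ref{thm:conv_iter}. The theorem requires an alignment in conditional expectation, $\EE[\langle\hat v_0,\nabla f(x_0)\rangle^2\mid x_0]\ge\gamma_0\|\nabla f(x_0)\|^2$, whereas RIP holds only with high probability over $\Psi$.

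I would first try drawing a fresh $\Psi$ at each refresh, independent of the past. On the RIP event of probability $1-e^{-ck'}$ the bound $1-\rho^2$ holds, and otherwise the alignment is at least $0$. This gives expected alignment at least $(1-e^{-ck'})(1-\rho^2)$. The loss can be absorbed by slightly enlarging $T$, by a constant, or by enlarging $k'$ by a constant factor.

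One must also check two further points. First, $\gamma_0$ so obtained satisfies $\gamma_0>\delta+\sqrt{d/n}$, as the theorem requires. Second, the expected cost bound is unaffected, since $\nu$ is deterministic given $k'$ and $T$.
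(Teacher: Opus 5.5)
Your proposal is correct and follows essentially the same route as the paper: you bound the total cost by $N(nd+d\xi)+(N/r)\,\nu$, take $N/r=O(\epsilon^{-2}\gamma_0^{-1})$ from Theorem~\ref{thm:conv_iter} and Corollary~\ref{cor:ref_bound}, and bound $\nu=O\bigl(k'\xi+k'n\log\tfrac{1}{1-\gamma_0}\bigr)$ via IHT, folding the RIP failure probability into $\gamma_0=(1-Ce^{-ck'})(1-\rho^2)$ exactly as in Proposition~\ref{prop:sparse_refresh_cost}. Two simplifications are left implicit: that $\gamma_0/r$ dominates the $\tilde t_0\approx t_0\sqrt{d/(n-1)}$ correction in the denominator, and that $Ce^{-ck'}$ is small relative to $1-\gamma_0$ so the logarithm reduces to $\log\tfrac{1}{1-\gamma_0}$; the paper makes the same simplifications.
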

\begin{rem}
    In the scheme where $d = O(\log(n))$ and $\gamma_0 = \Theta(1)$, the first term for each of the factors for $n,\xi$ is dominated by the $k'$ term, giving us a complexity of $O(\epsilon^{-2} s \log(n) (n+\xi))$. Assuming that $\xi = O(n)$ minimally, since the function is $n-$dimensional, and computationally it requires at least $O(n)$ operations on the input space, then the complexity is $O(s\log(n)\xi \epsilon^{-2})$, scaling in the order of the sparsity $s$ instead of the full dimension $n$.
\end{rem}

For an objective function with a finite sum structure, we can perform a mini-batch variant of our proposed algorithm to obtain the following computational complexity.
\begin{thm}[Mini-batch Computational Complexity]\label{thm:mb_cost}
    Let $\nu$ be the cost of updating the alignment vector and $\xi$ be the cost of computing a directional derivative for one function $f_i$. Then, the cost of the algorithm to achieve a point $\min_k \mathbb{E} \left[ \norm*{\nabla f(x_k)}^2 \right] < \epsilon^2$ is given by
    \begin{equation}
    O\left( \epsilon^{-4\frac{1+\beta}{1-\beta}}  (nd + md\xi) + \epsilon^{\frac{-4}{1-\beta}}\nu \right),
    \end{equation}
    where $m$ is the batch size of the algorithm and $\beta \in (0,1)$ fixed. If $\nu = O(m' n \xi)$ (cost of computing $n$ directional derivatives for $m'$ functions), we have
    \begin{equation}
        O\left( \epsilon^{-4\frac{1
        +\beta}{1-\beta}} (nd + md\xi) + \epsilon^{\frac{-4}{1-\beta}} m'n\xi \right).
    \end{equation}
\end{thm}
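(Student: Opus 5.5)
The proof is a cost-accounting argument layered on the iteration complexity of Theorem~\ref{thm:conv_iter}. We treat the mini-batch variant (Section~\ref{sec:mb_general}) as giving a sufficient-decrease inequality of the form
\[
\EE[f(x_k)] \le \EE[f(x_{k-1})] - \alpha_k c\,(\gamma_{k-1}+\tfrac{d}{n-1})\,\EE\norm{\nabla f(x_{k-1})}^2 + \tfrac{L\alpha_k^2}{2}\tfrac{\sigma^2}{m}\cdot(\text{const}).
\]
It is obtained from \eqref{eqn:lip_ineq} by conditioning first on $\tilde P_{k-1}$ and then on the mini-batch. The extra variance term comes from the fact that only the $(d+1)$-dimensional projection of the stochastic gradient $\frac1m\sum_{i\in B}\nabla f_i$ is used, so it is at most $\sigma^2/m$ by the bounded variance assumption.

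\textbf{Step sizes and iteration count.} We take decaying step sizes $\alpha_k \asymp k^{-(1+\beta)/2}$ for the fixed $\beta\in(0,1)$. Telescoping over $N$ steps and lower bounding $\gamma_{k-1}\ge\delta$ gives
\[
\min_k \EE\norm{\nabla f(x_k)}^2 \lesssim \frac{f(x_0)-f^\ast + \sigma^2\sum_k\alpha_k^2}{\sum_k\alpha_k}.
\]
With $\alpha_k\asymp k^{-(1+\beta)/2}$ we have $\sum_k\alpha_k^2<\infty$ and $\sum_k\alpha_k\asymp N^{(1-\beta)/2}$, so the bound is $O\big(N^{-(1-\beta)/2}\big)$. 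Requiring this to be below $\epsilon^2$ gives
\[
N \asymp \epsilon^{-4/(1-\beta)}.
\]

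\textbf{Refresh schedule.} This is where the persistence of memory enters, and where the exponent $\epsilon^{-4(1+\beta)/(1-\beta)}$ must come from. The mechanism is the alignment-decay argument behind Corollary~\ref{cor:ref_bound}: between refreshes, $\log(\gamma_k/\gamma_{k-1})$ decreases by an amount proportional to the current step size. A guidance vector can therefore be reused until the accumulated step size $\sum\alpha_k$ over the block reaches $\Theta(\log(\gamma_0/\tilde\delta))$. As the steps shrink, the blocks lengthen.

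Reconciling the stated exponents requires a particular pairing of roles. The term $\epsilon^{-4/(1-\beta)}\nu$ says the number of refreshes is $R\asymp\epsilon^{-4/(1-\beta)}$. The term $\epsilon^{-4(1+\beta)/(1-\beta)}(nd+md\xi)$ says the total number of SSD steps is $N_{\text{tot}}=\epsilon^{-4(1+\beta)/(1-\beta)}$, i.e.\ the step count is $R^{1+\beta}$. So the intended scheme is that the $j$-th refresh block has length $\asymp j^{\beta}$ with step size within block $j$ of order $j^{-\beta}$ (up to constants). Then:
\begin{itemize}
\item each block's accumulated step size is $\Theta(1)$, so the alignment stays above $\delta$ by Corollary~\ref{cor:ref_bound};
\item the total step count is $\sum_{j\le R} j^\beta \asymp R^{1+\beta}$;
\item the telescoped descent over $R$ blocks delivers accuracy $\epsilon^2$ once $R\asymp\epsilon^{-4/(1-\beta)}$, after balancing the variance term against $\sum_j\alpha_j$.
\end{itemize}
I would fix exactly this schedule and verify the balance: the descent gain per block is $\Theta(\min_{\text{block}}\EE\norm{\nabla f}^2)$, and the variance accumulation per block is $\sum\alpha^2\asymp j^{\beta}\,j^{-2\beta}=j^{-\beta}$.

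\textbf{Cost per step and total.} Each SSD step generates $\tilde P$ at cost $O(nd)$, as assumed in the computational cost discussion. It also computes $d+1$ directional derivatives for each of the $m$ functions in the batch, costing $O(md\xi)$. Each refresh costs $\nu$. Multiplying these by $N_{\text{tot}}$ and $R$ respectively gives the first claimed bound. Substituting $\nu=O(m'n\xi)$, the cost of the larger-batch gradient $v_0=\frac1{m'}\sum_{i\in B'}\nabla f_i(x_0)$ via $n$ directional derivatives per function, gives the second. We also need $m'$ large enough that the refresh alignment is at least $\gamma_0$, which is the condition on $m'$ stated earlier.

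\textbf{Main obstacle.} The hardest step is the alignment persistence under stochastic mini-batch updates. The decay bound of Corollary~\ref{cor:ref_bound} was derived for exact projected gradients, and here the drift of $\nabla f(x_k)$ away from $\hat v$ is inflated by mini-batch noise. I would first try to bound this drift in expectation by $L\alpha_k(\norm{P P^\top\nabla f}+\sigma/\sqrt m)$. I would then show that the noise contribution, summed over a block, is $O(\sum\alpha_k)$ and can be absorbed into the $\Theta(1)$ per-block step-size budget. Getting the exponent bookkeeping exactly right in this step is the delicate part.
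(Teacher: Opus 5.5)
Your per-step accounting matches the paper: $O(nd+md\xi)$ per SSD step, $\nu$ per refresh, and total steps $N\asymp R^{1+\beta}$ for $R$ refreshes. The gap is that the values of $N$ and $R$ are never derived. You read them off the target bound and then propose a schedule that does not produce them.

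The paper's proof is just $N(nd+md\xi)+s\nu$. It takes $N=O(\epsilon^{-4(1+\beta)/(1-\beta)})$ from Theorem~\ref{thm:iter_mb_general} (not Theorem~\ref{thm:conv_iter}) and $s\asymp N^{1/(1+\beta)}$ from $r_s\asymp s^{1+\beta}$ in \eqref{eqn:mb_num_ref}. Your own steps are inconsistent:
\begin{itemize}
\item Step 1 uses $\alpha_k\asymp k^{-(1+\beta)/2}$ and concludes $N\asymp\epsilon^{-4/(1-\beta)}$. This contradicts Step 2, which needs $N_{\mathrm{tot}}\asymp\epsilon^{-4(1+\beta)/(1-\beta)}$.
\item The balance you promise to verify in Step 2 fails. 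With block $j$ of length $j^\beta$ and step $j^{-\beta}$, you get $\sum\alpha\asymp R$ and $\sum\alpha^2\asymp\sum_j j^{-\beta}\asymp R^{1-\beta}$. Your descent inequality then gives $\min_k\EE\norm{\nabla f(x_k)}^2\lesssim R^{-\beta}$, i.e.\ $R\asymp\epsilon^{-2/\beta}$. This equals $\epsilon^{-4/(1-\beta)}$ only when $\beta=1/3$.
\end{itemize}

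The missing ingredient is the first-order variance bias caused by the stale guidance vector. In the mini-batch setting the alignment only satisfies
\[
\EE\inner{\hat v_k,\nabla f(x_k)}^2\ge\gamma_k\EE\norm{\nabla f(x_k)}^2-\eta_k\sigma_m^2,
\qquad \eta_k\le 3L\sum_{j=r_i}^{k-1}\alpha_j,
\]
by Proposition~\ref{prop:minibatch-refresh}. Plugging this into the descent inequality produces a term $+\alpha_k\eta_k\sigma_m^2$, which is first order in the step size, not $\alpha_k^2\sigma^2$ as in your inequality.

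The noise-induced drift of $\nabla f(x_k)$ is additive, of size $L\alpha_k\sigma_m$ per step. So it cannot be ``absorbed into the $\Theta(1)$ per-block step budget'' once $\norm{\nabla f}\lesssim\sigma_m$, which is exactly the regime near accuracy $\epsilon$. In your scheme $\eta=\Theta(1)$ in every block, so $\sum_k\alpha_k\eta_k\asymp\sum_k\alpha_k\asymp R$. That gives an $O(\sigma_m^2)$ floor instead of convergence.

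The paper avoids this by taking $\alpha_k\asymp 1/(L\sqrt{k})$. The per-block step budget is then $i^\beta/\sqrt{r_i}\asymp i^{(\beta-1)/2}\to 0$. Hence $\sum_k\alpha_k\eta_k\lesssim\sum_{i\le s}i^{\beta-1}\asymp s^{\beta}\asymp N^{\beta/(1+\beta)}$, compared with $\sum_k\alpha_k\asymp\sqrt{N}$. This gives the rate $N^{-(1-\beta)/(2(1+\beta))}$, and that is exactly where the exponent $4(1+\beta)/(1-\beta)$ comes from.

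To repair the proposal, either cite Theorem~\ref{thm:iter_mb_general} and $s\asymp N^{1/(1+\beta)}$ directly, or redo the iteration count with the $\alpha_k\eta_k\sigma_m^2$ term included.
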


\subsection{Local Regimes and Gradient Alignment Phenomena}\label{sec:local}
The main bottleneck of this algorithm is the fact that we have to update the alignment vector after a certain number of steps. However, it turns out that under special circumstances, we forgo the need to update this vector and it is still sufficiently well-aligned to the gradient. 

It is well-understood (Theorem 10.1.3 in \cite{ortega2000iterate}) that in gradient descent for strongly convex functions, if we start from a point where the gradient has non-zero correlation with the smallest eigenvector $u_1$ of the Hessian at the optimal solution, and if this point is sufficiently close to the optimal solution (in the local regime), then we have that the normalised gradient $\nabla f(x_k) / \| \nabla f(x_k) \|$ converges to $u_1$. We will show that our algorithm also enjoys a similar result in a finite horizon. Define the finite horizon
\begin{equation}
    K_\epsilon = \min \{k \mid \|\nabla f(x_k)\| \leq \epsilon\}.
\end{equation}

We will also assume additional properties of the function and the alignment vector, namely the function $f \in C^3$ and a weakly correlated alignment vector exists at every step with high probability, which increases to 1 as the correlation increases to 1.
\begin{assump}\label{assmp:align_prob}
    There exists an alignment vector with alignment $\delta$ exists with high probability. More formally, for all $k \leq K_\epsilon$, denote the event
    \begin{equation}
        \mathcal{K}_k(\delta) := \left\{\inner{\hat{v}_k, \nabla f(x_k)}^2 \geq \delta \norm*{\nabla f(x_k)}^2 \right\}
    \end{equation}
    and assume
    \begin{equation}\label{eqn:align_prob}
        \mathbb{P}\left[ \mathcal{K}_k(
        \delta)\mid \nabla f(x_k)\right] \geq 1 - p_{v}(\delta),
    \end{equation}
    where $p_v(\delta)$ is an exponentially decreasing function of $\delta$.
\end{assump}

\subsubsection{Strong Alignment in Local Regimes}
For this section, we will assume that the function is strongly convex and that the Hessian at the optimal point, denoted by $H$, has eigenvalues and eigenvectors $\{(\lambda_i, u_i)\}_{i=1}^n$, where $\lambda_1 < \lambda_2 < \cdots < \lambda_n$. Under this condition, we show that for any given threshold $\delta_0$, if $\inner{\nabla f(x_0), u_1} \neq 0$, then there exists a $k_1$ iterate such that $\inner{u_1, \nabla f(x_{k_1})}^2 \geq \delta_0 \norm*{\nabla f(x_{k_1})}^2$. What this means is that if the initial gradient is not orthogonal to $u_1$, eventually we have that the gradient of the function is strongly aligned to this vector.

This strong alignment in the $k_1^{th}$ step allows us to {\it freeze} the alignment vector. We show that even after this step, the gradient $\nabla f(x_k)$ remains strongly aligned with $u_1$, coupled with the fact that $v_{k} = v_{k_1}$ is also strongly aligned with $u_1$, we naturally get that $\nabla f(x_k)$ is also strongly aligned with $u_1$. The next proposition gives us a lower bound on this alignment.

\begin{prop}\label{prop:sum_angle}
    Let $u,v,w$ be unit vectors in $\mathbb{R}^n$ such that
    \begin{equation*}
        \inner{u,v}^2 \ge {\delta_1}, \qquad \inner{v,w}^2 \ge {\delta_2}.
    \end{equation*}
    If $\delta_1 + \delta_2 \ge 1$, then
    \begin{equation}\label{eqn:sum_angle}
    \inner{u,w}^2 \ge \left(\sqrt{\delta_1\delta_2} - \sqrt{(1-\delta_1)(1-\delta_2)}\right)^2.
    \end{equation}
\end{prop}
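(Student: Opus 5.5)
This is the triangle inequality for angles between lines. Write $\theta_1 = \arccos|\inner{u,v}|$ and $\theta_2 = \arccos|\inner{v,w}|$, both in $[0,\pi/2]$. The hypotheses say $\cos^2\theta_1 \ge \delta_1$ and $\cos^2\theta_2 \ge \delta_2$. Since $\arccos$ is decreasing, this gives $\theta_1 \le \arccos\sqrt{\delta_1} =: \phi_1$ and $\theta_2 \le \arccos\sqrt{\delta_2} =: \phi_2$.

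The condition $\delta_1+\delta_2 \ge 1$ is exactly what makes $\phi_1+\phi_2 \le \pi/2$. Indeed, $\cos^2\phi_1 \ge 1-\cos^2\phi_2 = \sin^2\phi_2 = \cos^2(\pi/2-\phi_2)$, so $\phi_1 \le \pi/2-\phi_2$. The right-hand side of \eqref{eqn:sum_angle} is $\cos^2(\phi_1+\phi_2)$, because $\cos(\phi_1+\phi_2)=\sqrt{\delta_1\delta_2}-\sqrt{(1-\delta_1)(1-\delta_2)}$. This quantity is nonnegative by the same condition.

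The key step is to show $\arccos|\inner{u,w}| \le \theta_1+\theta_2$. Replace $u$ by $\pm u$ and $w$ by $\pm w$ so that $\inner{u,v}=\cos\theta_1\ge 0$ and $\inner{v,w}=\cos\theta_2 \ge 0$; signs do not affect the squared inner products. Decompose
\[
u=\cos\theta_1\, v+\sin\theta_1\, a, \qquad w=\cos\theta_2\, v+\sin\theta_2\, b,
\]
where $a,b$ are unit vectors orthogonal to $v$ (any choice works if a sine vanishes). Then
\[
\inner{u,w}=\cos\theta_1\cos\theta_2+\sin\theta_1\sin\theta_2\inner{a,b} \ge \cos\theta_1\cos\theta_2-\sin\theta_1\sin\theta_2=\cos(\theta_1+\theta_2),
\]
using $\inner{a,b}\ge -1$ and nonnegative sines.

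Now $\theta_1+\theta_2 \le \phi_1+\phi_2 \le \pi/2$, and $\cos$ is decreasing and nonnegative on $[0,\pi/2]$. Hence $\inner{u,w} \ge \cos(\theta_1+\theta_2) \ge \cos(\phi_1+\phi_2)\ge 0$. Squaring preserves the inequality between nonnegative numbers, which gives \eqref{eqn:sum_angle}.

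The main point needing care is that the bound is nonnegative before squaring. This is exactly where $\delta_1+\delta_2\ge1$ enters; without it, $\cos(\theta_1+\theta_2)$ could be negative and squaring would reverse the inequality. The rest is routine.
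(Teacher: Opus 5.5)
Your proof is correct, and it takes a different route from the paper's.

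\textbf{The paper's argument.} It is algebraic. The Gram matrix of $u,v,w$ is positive semidefinite, so $\det G \ge 0$. This yields a quadratic inequality in $x=\langle u,w\rangle$, which places $x$ between the two roots $\alpha\beta \pm \sqrt{(1-\alpha^2)(1-\beta^2)}$, where $\alpha=\langle u,v\rangle$ and $\beta=\langle v,w\rangle$. From this it deduces $|x|\ge|\alpha\beta|-\sqrt{(1-\alpha^2)(1-\beta^2)}$. It then replaces $|\alpha|,|\beta|$ by $\sqrt{\delta_1},\sqrt{\delta_2}$ and squares, using $\delta_1+\delta_2\ge 1$ for nonnegativity.

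\textbf{Your argument.} Your decomposition $u=\cos\theta_1\, v+\sin\theta_1\, a$, $w=\cos\theta_2\, v+\sin\theta_2\, b$, together with $\langle a,b\rangle\in[-1,1]$, encodes exactly the same constraint. The two roots of the determinant inequality correspond to $\langle a,b\rangle=\pm 1$. So the core inequality is identical, and the difference lies in how the remaining steps are handled.

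\textbf{What each buys.} The Gram-determinant route is compact and gives the full two-sided interval for $\langle u,w\rangle$ in one line. However, its passage from $\alpha^2\ge\delta_1$, $\beta^2\ge\delta_2$ to the bound in terms of $\delta_1,\delta_2$ relies on the monotonicity of $(s,t)\mapsto st-\sqrt{(1-s^2)(1-t^2)}$ on $[0,1]^2$, which the paper only asserts (``which minimizes the lower bound''). Your angular formulation makes that monotonicity immediate: it is just $\theta_1+\theta_2\le\phi_1+\phi_2$ together with $\cos$ being decreasing on $[0,\pi/2]$. You handle signs by flipping $u$ and $w$ rather than through the absolute-value manipulation. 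Your route also makes clear that $\delta_1+\delta_2\ge 1$ enters only to guarantee $\phi_1+\phi_2\le\pi/2$, so that the lower bound is nonnegative before squaring.
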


Note that the condition that $\delta_1 + \delta_2 \geq 1$ is natural, as we would expect that if the sum of the 2 angles is larger than 1, effectively in the worse case, $u, v$ will be orthogonal. Nonetheless strong alignment between $u_1, \nabla f(x_k)$ and $u_1, v_{k_1}$ means that we no longer have to refresh the alignment vector after the $k_1^{th}$ step, effectively making the algorithm as cheap as the original Kozak's algorithm, while enjoying the faster convergence due to the alignment vector.

Following that, we state the main theorem below.
\begin{thm}\label{thm:local-norefresh}
    Let $f \in C^3$ and strongly convex with parameter $\mu$ and $x^\ast$ be the unique minimiser. Suppose the Hessian of $f$ near $x^\ast$: $H = \nabla^2 f(x^\ast)$, is locally Lipschitz around radius $r_0 > 0$ with simple eigenvalues $\lambda_1 < \cdots < \lambda_n$. Let the initial gap satisfy $f(x_0) - f^\ast \leq \frac{1}{2} \mu r_0^2$, where $f^\ast$ is function value at $x^\ast$. Consider the finite horizon $K_\epsilon = \min \{k \mid \norm*{\nabla f(x_k)} \leq \epsilon\}$ and suppose Assumption \ref{assmp:align_prob} holds in this horizon, with $\delta$ satisfying
    $$\delta \gtrsim 1 - \min{ \left(\omega\frac{\mu_1}{\mu_2(1+\eta)}\frac{\lambda_2 - \lambda_1}{ \lambda_1},  \frac{\lambda_2 - \lambda_1}{(1+\eta) \omega^{-1} \frac{\mu_2}{\mu_1} \lambda_1 + (1-\omega^2)^{-1/2} \lambda_n \mu_1}\right)}^2,$$
    where $\omega > 0$ is the initial alignment $\abs*{\inner*{u_1, \nabla f(x_0)}} = \omega \norm*{\nabla f(x_0)}$ and $\mu_i = 1 - \alpha \lambda_i$. Then with Algorithm \eqref{Alg: SSD} with $\alpha \in (0, 1/\lambda_n)$, given $\delta_0 \in (1/2,1]$, there exists an iteration $k_1 < K_\epsilon$ such that
    \begin{equation}
        \mathbb{P} \left[\langle \nabla f(x_{k_1}), u_1 \rangle^2 \geq \delta_0 \|\nabla f(x_{k_1})\|^2\right] \geq 1 - e^{-\frac{1}{8}k_1 (1 - p_{decay}(\hat \tau, \delta, d))},
    \end{equation}
    where $p_{decay}(\hat \tau,\delta, d) = e^{-cd \hat \tau^2} + p_v(\delta)$, and $\hat \tau \in (0,1)$. Furthermore, for $k > k_1$, we fix the alignment vector
    \[
        v_k = v_{k_1} = \nabla f(x_{k_1})
    \]
    and scale the random part of the matrix $P_k$ by a factor of $\sqrt{\frac{n-1}{d}}$, i.e. $\tilde P_k \rightarrow \sqrt{\frac{n-1}{d}} \tilde P_k$ . Then with probability at least
    \[
    \left(1 - \sum_{i = 1}^n e^{-cd\tau_i^2} -e^{-cd\tau^2}\right)^{K_\epsilon - k_1},
    \]
    for $k \in (k_1, K_\epsilon)$, we still have $\frac{|u_i^\top \nabla f(x_k)|}{|u_1^\top \nabla f(x_k)|} = O(\lambda_i \tau_i)$, where
    \begin{align*}
        \tau_1 &\lesssim \frac{1-\alpha \lambda_1}{1 - \alpha \lambda_2} \frac{\lambda_2 - \lambda_1}{\lambda_1} \sqrt{\delta_0} \\
        \tau_i &\lesssim \frac{\lambda_i - \lambda_1}{\lambda_i} \sqrt{1-\delta_0} \\
        \alpha_{k_1} &= \frac{d}{L}\frac{1 - 4\tau \delta_0(1-\delta_0)}{(n-1)(-\tau + 4\delta_0(1-\delta_0)(1+\tau)) + d(2\delta_0-1)^2},
    \end{align*}
    with some $\tau \in (0,1)$ and $\alpha_{k_1}$ being the step size under the rescaled random matrix to ensure exponential decay of the gradient. The $\lesssim$ hides some constant and exponentially decaying term under the local regime, and the constant $c > 0$ is a universal independent of all other variables. The projected dimension $d$ is also required to fulfill
    \begin{equation}
        d \geq \frac{1}{c} \max \left\{ \log(n+1) \cdot \max \left\{ \frac{1}{\delta_0} \left(\frac{\lambda_1}{\lambda_2 - \lambda_1} \frac{1 - \alpha \lambda_2}{1 - \alpha \lambda_1}\right)^2, \frac{\max_{2 \leq i \leq n} \left(\frac{\lambda_i}{\lambda_i - \lambda_1}\right)^2}{(1-\delta_0)}, \frac{1}{\tau^2}\right\}, \frac{1}{\hat \tau^2} \log\left(\frac{1}{1-p_v(\delta)}\right)\right\}.
    \end{equation}
\end{thm}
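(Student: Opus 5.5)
We will linearise the dynamics around $x^\ast$ and track the gradient in the eigenbasis $\{u_i\}$ of $H$. The assumption $f(x_0)-f^\ast\le \frac12\mu r_0^2$ together with the descent property of each step keeps every iterate inside the ball of radius $r_0$. Inside that ball we write
\[
\nabla f(x_k)=H(x_k-x^\ast)+R_k,\qquad \|R_k\|\lesssim \|x_k-x^\ast\|^2,
\]
which uses $f\in C^3$ and the local Lipschitz property of the Hessian. The update then gives a perturbed linear recursion
\[
g_{k+1}\approx \bigl(I-\alpha H P_kP_k^\top\bigr)g_k,\qquad g_k=\nabla f(x_k),
\]
with a quadratic remainder. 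The remainder decays geometrically along the trajectory, which is why the statement hides ``exponentially decaying terms'' in $\lesssim$. In coordinates $a_k^{(i)}=\inner{u_i,g_k}$, the ideal recursion $P_kP_k^\top=I$ gives $a_{k+1}^{(i)}=\mu_i a_k^{(i)}$. This is the power iteration with ratio $\mu_i/\mu_1<1$ for every $i\ge2$, as in the Ortega--Rheinboldt argument.

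\textbf{Phase 1 (reaching $k_1$).} We decompose
\[
P_kP_k^\top=\hat v_k\hat v_k^\top+\tilde P_k\tilde P_k^\top .
\]
On the event $\mathcal K_k(\delta)$ of Assumption~\ref{assmp:align_prob}, $\hat v_k$ is within angle $\arccos\sqrt{\delta}$ of $\hat g_k$, so $\hat v_k\hat v_k^\top g_k=g_k+e_k$ with $\|e_k\|\le\sqrt{1-\delta}\,\|g_k\|$. The random part we control by a Johnson--Lindenstrauss/concentration bound for Haar projections. For fixed vectors, $\|\tilde P_k^\top w\|^2$ lies within $(1\pm\hat\tau)\frac{d}{n-1}\|w\|^2$ with probability at least $1-e^{-cd\hat\tau^2}$. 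Call a step \emph{good} if both events hold; its failure probability is at most $p_{decay}$. On a good step we show that the alignment ratio
\[
\rho_k=\frac{\|g_k-a_k^{(1)}u_1\|}{|a_k^{(1)}|}
\]
contracts by a factor bounded by
\[
\frac{\mu_2}{\mu_1}(1+\eta)+O\!\left(\sqrt{1-\delta}\,\frac{\lambda_n}{\lambda_1}\right)<1 .
\]
The hypothesised lower bound on $\delta$, in terms of $\omega$, $\mu_1/\mu_2$ and the gap $\lambda_2-\lambda_1$, is exactly what makes this factor less than $1$ and what prevents the component along $u_1$ from being destroyed. On a bad step we only use the crude bound $\rho_{k+1}\le C\rho_k$. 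By a Chernoff bound on the number of good steps among the $k_1$ independent trials, at least half of them are good with probability $1-e^{-\frac18k_1(1-p_{decay})}$. Since $\omega>0$ gives a finite $\rho_0$, this forces $\rho_{k_1}^2\le(1-\delta_0)/\delta_0$, that is, $\inner{u_1,\hat g_{k_1}}^2\ge\delta_0$.

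\textbf{Phase 2 (frozen $v$).} For $k>k_1$ we set $v=g_{k_1}$ and rescale $\tilde P_k$ by $\sqrt{(n-1)/d}$, so that $\EE[\tilde P_k\tilde P_k^\top]=I-\hat v\hat v^\top$. The expected update is then the full GD map, and the step is $g_{k+1}=(I-\alpha_{k_1}H\,\Pi_k)g_k$ plus a remainder, where $\Pi_k$ is a random projection that is approximately the identity on fixed vectors. We argue by induction on $k$, with the hypothesis $|a_k^{(i)}|/|a_k^{(1)}|=O(\lambda_i\tau_i)$. Conditionally on $g_k$, we apply the concentration bound to the $n+1$ fixed vectors $u_1,\dots,u_n$ and $\hat g_k$. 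A union bound gives the per-step probability $1-\sum_i e^{-cd\tau_i^2}-e^{-cd\tau^2}$, and the product over $K_\epsilon-k_1$ steps yields the stated probability. The lower bound on $d$, with its $\log(n+1)$ factor, is what makes each exponent small, with $\tau_1$ and $\tau_i$ sized so that the fluctuation in each coordinate is dominated by the deterministic contraction $\mu_i/\mu_1$. Proposition~\ref{prop:sum_angle}, applied to $u_1$, $\hat v$ and $\hat g_k$ with $\delta_0>1/2$, keeps $\hat v$ aligned with $g_k$ as well. This in turn lets us choose $\alpha_{k_1}$ from the Lipschitz descent inequality \eqref{eqn:lip_ineq}, by solving the quadratic in $\alpha$ so that $\EE\|g_{k+1}\|^2\le(1-c')\|g_k\|^2$; this gives the displayed formula for $\alpha_{k_1}$.

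The main obstacle is the per-coordinate control in Phase~2. After rescaling, the random term $\frac{n-1}{d}\tilde P\tilde P^\top$ has fluctuations of size $\tau\sqrt{(n-1)/d}$ in operator norm, so a naive bound blows up. We therefore plan to bound only the bilinear forms
\[
u_i^\top\Bigl(\tfrac{n-1}{d}\tilde P\tilde P^\top-(I-\hat v\hat v^\top)\Bigr)g_k
\]
via polarisation on fixed vectors. The weak coupling to the small coordinates $a^{(i)}$ has to be absorbed through the factor $(\lambda_i-\lambda_1)/\lambda_i$, and this is the delicate step.
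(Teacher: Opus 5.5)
Your Phase~2 follows the paper closely: per-coordinate ratios $|u_i^\top g_k|/|u_1^\top g_k|$, bilinear concentration by polarisation (Lemma~\ref{lem:Pk_highprob}, Corollary~\ref{cor:rescaled_Pk_highprob}), Proposition~\ref{prop:sum_angle} giving alignment $(2\delta_0-1)^2$ with the frozen vector, the step size from the quadratic in Lemma~\ref{lem:decay_rescaled}, and Lemma~\ref{lem:prod_prob} for the product of conditional probabilities. The gap is in Phase~1.

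First, the good-step estimate is not a multiplicative contraction of $\rho_k$. On $\mathcal K_k(\delta)$ the error $E_k=(I-\tilde P_k\tilde P_k^\top)\Proj_{\hat v_k^\perp}g_k$ can have norm $\sqrt{1-\delta}\,\norm{g_k}$ in a direction unrelated to $g_k$. Relative to $|a_k^{(1)}|$ it therefore adds $\alpha\lambda_n\sqrt{1-\delta}\sqrt{1+\rho_k^2}$ to the numerator, and this does not vanish as $\rho_k\to0$. Concretely, take $g_k\parallel u_1$ and $\hat v_k=\sqrt{\delta}\,u_1+\sqrt{1-\delta}\,u_j$ for some $j\ge2$: then $\rho_k=0$, but $\rho_{k+1}$ is of order $\alpha\lambda_j\sqrt{1-\delta}$. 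What you can actually prove is $\rho_{k+1}\le q\rho_k+\varepsilon$ with $\varepsilon\asymp\sqrt{1-\delta}$. By Lemma~\ref{lem:fp_perturb} this only drives $\rho_k$ into a ball of radius $O(\varepsilon/(1-q))$. So ``this forces $\rho_{k_1}^2\le(1-\delta_0)/\delta_0$'' does not follow for an arbitrary $\delta_0$: $\delta$ must be chosen close to $1$ in terms of $\delta_0$. The paper does exactly this, setting $\sqrt{1-\delta}=C^{-1}\sqrt{(1-\tilde\delta_0)/\tilde\delta_0}$ after establishing the perturbed contraction. The two terms in the minimum defining the lower bound on $\delta$ are precisely the conditions that make that recursion contract and keep $\sqrt{1+t_k^2}$ from growing.

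Second, the bad-step bound $\rho_{k+1}\le C\rho_k$ is not available. When $\mathcal K_k(\delta)$ fails, $\hat v_k$ is unconstrained and $\norm{E_k}$ can be comparable to $\norm{g_k}$. The new $u_1$-coefficient $\mu_1a_k^{(1)}+\alpha\lambda_1u_1^\top E_k+u_1^\top r_k$ can then be made zero once $\sqrt{1+\rho_k^2}\gtrsim\mu_1/(\alpha\lambda_1)$. For small $\rho_k$, a single bad step produces a ratio of order $\alpha\lambda_n$ from an arbitrarily small one. Counting good steps therefore cannot conclude. Even granting some $C$, you would need $q^{G}C^{B}\to0$ for $G$ good and $B$ bad steps, and with the additive floor the position of the bad steps matters, not just their number. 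Moreover, the exponent $\frac18k_1(1-p_{decay})$ only guarantees $\frac12(1-p_{decay})k_1$ good steps, not half of them.

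In the paper, good-step counting appears only in Lemma~\ref{lem:exp_decay}, for $f(x_k)-f^\ast$. There a bad step is harmless because $f(x_{k+1})\le f(x_k)$ holds deterministically. That lemma is the source of the probability $1-e^{-\frac18k_1(1-p_{decay})}$, and it is also the missing justification for your assertion that the Taylor remainder decays geometrically. The alignment recursion is instead run at every step up to $k_1$ with the deterministic bound $\norm{E_k}\le\sqrt{1-\delta}\,\norm{g_k}$. No concentration is needed there, since $\norm{I-\tilde P_k\tilde P_k^\top}_{\op}\le1$. If you repair your argument along these lines, note that it uses $\mathcal K_k(\delta)$ at each of those steps. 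By a union bound that costs an additional $k_1p_v(\delta)$, which is not part of the decay event.
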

\begin{rem}
    This means that we can control the alignment of the gradient with the non leading eigenvectors to be arbitrarily small, by controlling the closeness of $\delta_0 \rightarrow 1$. In other words, the gradient remains weakly correlated with the leading eigenvector, and consequently, is weakly correlated with the {\it frozen} alignment vector.
\end{rem}
\begin{rem}
   Although the step size $\alpha_{k_1} \approx \frac{d}{nL\delta_0(1-\delta_0)}$ is larger than the step size when $k < k_1$, it can be arbitrarily close to it by taking $\delta_0 \rightarrow 1$. This means that our step size does not suffer the same issue as in the Kozak's SSD algorithm, which is of order $\frac{d}{n}$.
\end{rem}
\begin{rem}
    Under certain scenarios (which we will discuss in the following section), the bottom eigengap $\lambda_2 - \lambda_1 = \Theta(1)$. This means that the bounds on $\tau_i$ solely depend on the alignment parameter $\delta_0$. In fact, since $\lambda_i > \lambda_2$ for $i > 2$, $\tau_i$ for the corresponding $i's$ is also allowed to be larger, making the success probability larger. Suppose $d = \frac{(\log(n))^m}{c\min_i\tau_i^2}$ for $m > 1$, then $d$ need not be too large to achieve a probability of $(1 - n^{1-m})^{K_\epsilon - k_1}$.
\end{rem}

\subsubsection{Application Scenarios in which Local Regime is Natural}
We will take a look at some applications where the structure of the problem leads to advantage in the local regime.

\paragraph{The Order of the Bottom Spectral Gap.}

For a matrix $H$, define the \emph{bottom gap} as
$$\delta(H) = \lambda_2(H) - \lambda_1(H).$$
In problems where a single feature direction is unusually weak (nearly degenerate) while the remaining directions are uniformly well-conditioned (variance of order one), we have $\delta(H) = \Theta(1)$. Spectral models consisting of isolated eigenvalues separated from a bulk have been studied extensively in random matrix theory and high dimensional statistics, such as through spiked covariance models and finite rank perturbations \cite{baik2005phase}. 

Such a spectral structure can be motivated by several settings in machine learning and statistics. For example, in linear regression with highly correlated inputs, a particular combination of nearly redundant features may have small but non zero variance, while the remaining directions retain $O(1)$ variance. More generally, how the non uniformity of the spectrum of the covariance affects overparameterised linear and kernel regression has been studied in benign and tempered overfitting \cite{bartlett2020benign, mallinar2022taxonomy, tsigler2023benign}. Related weak directions can also arise in latent variable models, such as VAEs, where \emph{posterior collapse} happens when one or more latent variables become uninformative. This has been related to the geometry of the objective function and the covariance structure of the data \cite{lucas2019dontblame}. Although these examples do not immediately imply an isolated bottom eigenvalue of the Hessian, it motivates the considering of spectral models containing a distinguished weak direction.

A rather straightforward example to look at is ridge regression models, which naturally lead to positive definite matrices of the form
\[
    H = \frac{1}{m} X^\top X + \lambda I, \qquad X = [x_1, x_2, \cdots x_m]^\top\in \mathbb{R}^{m \times n}.
\]
With $S = \frac{1}{m} X^\top X$, we get
\[
    \delta(H) = \lambda_2(S) - \lambda_1(S) = \delta(S).
\]
Essentially, this means that the regularisation does not change the bottom-gap, despite making the associated function strongly convex. In practice, $X$ might be our data set which is sampled from some population with covariance $\Sigma \in \mathbb{R}^{n \times n}$. 
Using the {\it spiked model} from random matrices as a concrete example, let the rows $x_i \in \mathbb{R}^n$ of $X$ be sampled independently from
\[
    x_i \sim N(0, \Sigma), \qquad \Sigma = \sigma^2 I_n - c \cdot u u^\top,
\]
where $u$ is a fixed unit vector, $\sigma^2 > c$ and $c > 0$ are fixed constants. Then $\lambda_1(\Sigma) = \sigma^2 - c$ and $\lambda_i = \sigma^2$ for $i \geq 2$, implying that the population bottom-gap is $c$ (i.e. $\delta(\Sigma) = c$). In this case, a sufficient condition on the bottom gap for $\Sigma$ ensures that the sample covariance also has a gap of the same order. Moreover, by Weyl's inequality
\[
    \delta(S) = \lambda_2(S) - \lambda_1(S) \geq \lambda_2(\Sigma) - \norm*{S - \Sigma}_{op} - \lambda_1(\Sigma) - \norm*{S - \Sigma}_{op} = \delta(\Sigma) - 2 \norm*{S - \Sigma}_{op}
\]
When $S$ concentrates around $\Sigma$ (i.e. $\norm*{S - \Sigma}_{op} = o(c)$), we get
\[
    \delta(H) = \delta(S) = c + o(1),
\]
implying that the bottom-gap for the problem is asymptotically $c$. \cite{koltchinskii2017concentration} showed that for iid Gaussian rows with covariance $\Sigma$,
\[
    \norm*{S - \Sigma}_{op} \lesssim \norm*{\Sigma}_{op} \left(\sqrt{\frac{r_{\text{eff}}}{m}} + \frac{r_{\text{eff}}}{m} \right), \qquad r_{\text{eff}} = \frac{\text{tr}(\Sigma)}{\norm*{\Sigma}_{op}}.
\]
Equivalently, if $\norm*{\Sigma}_{op} \left(\sqrt{\frac{r_{\text{eff}}}{m}} + \frac{r_{\text{eff}}}{m} \right) \ll c$, then $\delta(H) \approx c = \Theta(1)$. The same idea can be extended to generalised linear models of the form
\[
    H = \frac{1}{m} A^\top D^\ast A + \lambda I.
\]

\paragraph{Fast Estimation of Alignment Vector in Sparse Functions.}
Here, we show that in structured scenarios, a sufficiently good "once for all" estimation of the gradient could be computed relatively cheaply. In some sparse setups where the sparsity $s$ grows as a factor of $n$. i.e. $s = n^\iota$ for some small $\iota > 0$. As we will see later in Section \ref{sec:application_sparse}, we have the alignment between the estimation and the vector to be
\[
\gamma_0 = \left(1 - Ce^{-ck'}\right)(1-\rho^2),
\]
where $k'$ is the number of rows of the sensing matrix of the order $k' = O(s\log(n/s))$, $\rho$ is the error incurred by the reconstruction algorithm and $C,c > 0$ are universal constants.
Suppose we want an alignment $\gamma_0 = (1-Ce^{-ck'})(1 - \Theta(d/n))$. Equivalently, $\rho = \sqrt{\Theta(d/n)}$, then the number of iterations required is
\[
    T = \log \left( 1/\rho \right)
        = \frac{1}{2}\log \left( \frac{1}{\Theta(d/n)} \right) = \Theta(\log(n/d)).
\]
Coupled with the cost of each iteration in the recovery algorithm, we have that the cost to estimate such a vector is $O(T \times k'n + k'\xi) = O(s\log(n/s) \times (n\log(n/d) + \xi))$.
\section{Applications to Machine Learning}\label{sec:applications}

\subsection{Structured Optimisation and Fast Generation of Guidance Vectors}
In machine learning, the optimisation problems arising naturally out of inferential problems (or otherwise) typically have structural properties that make them amenable to algorithmic solutions, despite their typically high ambient dimensionality and other superficial complexities. A canonical example of this is accorded by the notion of sparsity, whereby a function depends only on a (relatively small) subset of the ambient coordinates, and its generalisation to the so-called manifold hypothesis, which posits that real world data may be envisaged to come from some low-dimensional manifold (whose specifics would in general be unknown to the practitioner). The natural goal, which has been achieved to a significant degree of success in the machine learning and statistics literature, is to leverage such structural properties to the effect that the complexity of algorithms scale with the intrinsic dimensionality (as opposed to the ambient dimensionality, which is typically much larger).

In this work, we bring this philosophy to bear on SSD approaches to optimisation, focussing on two foundational structures nearly ubiquitous in optimisation for ML -- sparsity and a mini-batch structure. We demonstrate that these structures can be effectively leveraged for fast and inexpensive generation of the guidance vector, which is a key component of our persistence of memory based approach to sparse SSD. This makes our method computationally attractive, with the additional advantage that our guidance vector generation mechanisms are also very strongly parallelisable, which is an additional benefit with regard to modern GPU-based or distributed computing architectures.

En route, we also establish to our knowledge the first theoretical analysis of classical SSD methods in the setting of sparse functions, which could be of independent interest.

\subsection{Sparse Functions}\label{sec:application_sparse}
\subsubsection{Structure of the Problem and Motivations}
A common structure exploited in machine learning objectives is when the function itself depends on only a small number of directions in its input space, even though it is nominally defined on a high-dimensional domain. 

We say $f: \mathbb{R}^n \to \mathbb{R}$ is \emph{sparse} (or has \emph{low-dimensional structure}, which can also be called "functions with low dimensionality" \cite{wang2016bayes}) if there exists an orthogonal projection matrix $\Pi$ on a $\mathbb{R}^s$ subspace ($s \le n$) such that for all $x \in \mathbb{R}^n$, 
\[
    f(x)=f(\Pi x).
\]
Let $R \in \mathbb{R}^{s \times n}$ be a matrix formed from an orthogonal basis of $\Pi \mathbb{R}^n$. We define $g: \mathbb{R}^s \mapsto \mathbb{R}$:
\[
    g(y)=f(R^\top y).
\]
Since $f(x)=f(\Pi x)$, we have that 
\begin{equation}\label{eqn:sparse_defn}
    g(Rx)=f(x).
\end{equation}
That is, $f$ only varies along the $s$-dimensional subspace spanned by the columns of $R$, and is constant along all directions orthogonal to it. This structure is often called a \emph{multi-index model} in the statistics literature, with the columns of $R$ referred to as the \emph{indices} or \emph{relevant directions}. Some examples include:

\begin{enumerate}
    \item {\bf Single-Index Models} ($s=1$): $f(x) = g(r^\top x)$ for a single direction $r \in \mathbb{R}^n$, such as generalised linear models $f(x) = \sigma(r^\top x)$ for a link function $\sigma$, or the activation of a single neuron; see e.g.\ \cite{hardle1993optimal} for classical estimation theory in this setting.
    \item {\bf Finite-Index Models} ($s > 1$ fixed): These are extensions of the single-index model, closely related to \emph{projection pursuit} \cite{diaconis1984nonlinear}, where for fixed $s > 1$, we have index vectors $\{w_1, \cdots, w_s\} \subset \mathbb{R}^n$ and $f$ is a function on the indices $w_j^\top x$, i.e. $f(x) =g(w_1^\top x, \cdots, w_s^\top x)$. Neural networks with one hidden layer are an example of a finite-index model: given weights $v_j \in \mathbb{R}^n, a_j \in \mathbb{R}$ and biases $b_j \in \mathbb{R}$, a one-hidden-layer neural network can be expressed as
    \[
        y = \sum_{j=1}^s a_j \sigma(v_j^\top x + b_j),
    \]
    where $\sigma$ is the activation function. Then, the indices are simply $z_j = v_j^\top x + b_j$ and the function is given by $g(z_1, \cdots, z_s) = \sum_{j=1}^s a_j \sigma(z_j)$.
    \item {\bf Axis-Aligned Sparsity}: When $R$ is restricted to a subset $S \subset \{1, \dots, n\}$ of $|S| = s$ coordinate directions (i.e.\ columns of the identity), this recovers the more familiar notion of coordinate sparsity, where $f$ depends on only $s$ of its $n$ input coordinates, as exploited by sparse regression methods such as the lasso \cite{tibshirani1996regression}.
    \item {\bf High-Dimensional Regression and Variable Selection.} In genomics and biomedical statistics, one often wishes to predict a phenotype or clinical outcome from a feature vector $x \in \mathbb{R}^n$ where $n$ (e.g.\ the number of measured genes, SNPs, or biomarkers) vastly exceeds the number of available samples. Domain knowledge typically suggests that only a small number $s$ of features, or linear combinations thereof, are causally relevant, motivating models of exactly the form \eqref{eqn:sparse_defn} -- both for statistical identifiability with limited samples and for interpretability of the resulting model \cite{fan2010selective}.
\end{enumerate}

Beyond these settings where sparsity is an explicit modelling assumption, such functions are frequently encountered in many applications. For instance, the loss functions of neural networks often have low rank Hessians \cite{gur2018gradient, sagun2017empirical, papyan2018full}. This phenomenon is also prevalent in other areas such as hyper-parameter optimization for neural networks \cite{bergstra2012search}, heuristic algorithms for combinatorial optimization problems \cite{hutter2014efficient}, complex engineering and physical simulation problems as in climate modeling \cite{knight2007parameter}, and policy search \cite{frohlich2019bayes}.

The sparse structure induces a restriction on the gradient of $f$. By the chain rule,
\[
    \nabla f(x) = R^\top \nabla g(R x),
\]
so $\nabla f(x)$ always lies in the $m$-dimensional row space of $R$, regardless of $x$. This is the key structural fact: although $f$ is defined on $\mathbb{R}^n$, its entire first-order behavior is confined to a $m$-dimensional subspace. In particular, if one knew $R$ in advance, optimising $f$ would reduce to optimising the $s$-dimensional function $g$ -- a dramatic reduction in complexity when $s \ll n$.

In practice, of course, $R$ is unknown and must be estimated alongside $g$, and much of the algorithmic interest in this setting lies precisely in how to \emph{identify} the relevant subspace efficiently; for instance using zeroth- or first-order queries whose number scales with $s$ rather than $n$.

\subsubsection{Fast Guidance Vectors in Sparse Settings}
Here, we look at how to obtain a guidance vector much cheaper than computing the full gradient when the sparsity of the problem is {\it axis-aligned} (i.e. the function only depends on a number of fixed coordinates, and $R$ is made up of coordinate vectors). The key to obtaining such a vector hinges on a result from compressed sensing, which allows us to measure a sparse vector up to an error relative to the vector norm, with cost that scales in the order of the $O(s\log(n/s) (n + \xi))$, in comparison to the $O(n\xi)$ cost for computing $n-$directional derivatives. (When $\xi = O(n)$, we have that the former scales much better in terms of $n$ than the latter.)

Given a sparse signal (vector) $y^\ast$, compressed sensing aims to recover this vector from linear measurements $x = \Psi y^\ast + e$, where the matrices $\Psi$ satisfy the restricted isometry property (RIP). A matrix $\Psi \in \mathbb{R}^{k' \times n}$ is said to satisfy the RIP with constant $\delta_s$ if for every $s-$sparse vector $z \in \mathbb{R}^n$,
\begin{equation}
    (1-\delta_s) \norm*{z}^2 \leq \norm*{\Psi z}^2 \leq (1+\delta_s) \norm*{z}^2.
\end{equation}
Some examples include the Gaussian, subsampled Hadamard and partial Fourier matrices. For example, if $\Psi$ is a Gaussian matrix, with entries $\Psi_{ij} \sim N(0, 1/k')$. For $k' \gtrsim s \log(n/s) \eta^{-2}$, we have $\mathbb{P} \left[\delta_{s} \leq \eta \right] \geq 1-C\exp(-ck')$, where $C, c$ are universal constants \cite{baraniuk2008ripgauss}.

Many compressed sensing recovery algorithms (CoSaMP, IHT, HTP, ...) produce $y_T$ with
\begin{equation}\label{eqn:cs_recov_error}
    \norm*{y_T - y^\ast} \leq \rho \norm*{y^\ast},
\end{equation}
after $T$ iterations at a cost of $O(\nu(\rho))$, when $e = 0$, $y^\ast$ is exactly $s-$sparse, provided that $\Psi$ satisfies a RIP condition of the relevant order with probability $\geq p$. The small error means that these 2 vectors have lower bounded correlation. To see this, we observe that $y_T$ lies in a ball of radius $\rho \norm*{y^\ast}$ around the vector $y^\ast$. The largest possible angle between $y_T$ and $y^\ast$ occurs when $y_T$ is tangent to the ball around $y^\ast$. Denoting the angle between these 2 vectors as $\theta$, we find that
\begin{equation*}
    \sin \theta \leq \frac{\rho \norm*{y^\ast}}{\norm*{y^\ast}} = \rho.
\end{equation*}
Thus,
\begin{equation*}
    \frac{\inner*{y_T, y^\ast}^2}{\norm*{y_T}^2 \norm*{y^\ast}^2}
        = \cos^2 \theta 
        \geq 1 - \rho^2,
\end{equation*}
and correspondingly
\begin{equation}
    \inner*{\hat {y_T}, y^\ast}^2 \geq (1-\rho^2) \norm*{y^\ast}^2.
\end{equation}
Applying this to our algorithm, we let $y^\ast = \nabla f(x_0)$, $x = \Psi \nabla f(x_0)$ and $y_0 = 0$. Then, with $v_0 = y_T$, we will obtain
\begin{equation*}
    \inner*{\hat v_0, \nabla f(x_0)}^2 \geq (1-\rho^2) \norm*{\nabla f(x_0)}^2.
\end{equation*}
Of course, this only occurs conditioned on the event $\mathcal{A}$ that the algorithm satisfies Equation \eqref{eqn:cs_recov_error} (which is normally only dependent on the choice of $\Psi$ used and independent of $x_0$). In the complement event, we can use 0 as a lower bound to obtain,
\begin{equation}\label{eqn:sparse_gamma_0}
    \mathbb{E} \left[\inner*{\hat v_0, \nabla f(x_0)}^2\right] 
        \geq \mathbb{E} \left[\inner*{\hat v_0, \nabla f(x_0)}^2 1_{\mathcal{A}}\right] 
        \geq (1-\rho^2) \mathbb{E} \left[\norm*{\nabla f(x_0)}^2 1_\mathcal{A} \right]
        = \gamma_0 \mathbb{E} \left[ \norm*{\nabla f(x_0)}^2 \right],
\end{equation}
where $\gamma_0 = (1-\rho^2) \times p$, with $\mathbb{P} [\mathcal{A}] \geq p$.

\paragraph{Iterative Hard Thresholding (IHT).} The algorithm is as follows: starting with $y_0 = 0$ and $x = \Psi y^\ast$, the iterates $y_t$ are updated by
\begin{equation}\label{eqn:iht_init}
    y_{t+1} = H_s \left[ y_t + \mu \Psi^\top (x - \Psi y_t) \right],
\end{equation}
where $\mu = \frac{1}{1 + \delta_s}$ and $H_s$ is the thresholding function that sets all but the top $s$ elements to $0$. By Corollary 1 of \cite{davies2009iht}, if $\Psi$ has RIP with $\delta_{3s} < 1/15$, then $y_t$ satisfies
\begin{equation}
    \norm*{y_t - y^\ast} \leq 2^{-t} \norm{y^\ast}.
\end{equation}
This means that $T = \left\lceil \frac{\log(1/\rho)}{\log(2)} \right\rceil$ to get Equation \eqref{eqn:cs_recov_error}. The follow proposition follows for the cost of a single refresh:
\begin{prop}\label{prop:sparse_refresh_cost}
    Let $f$ be a $s-$sparse function and let $\xi$ be the cost of computing a directional derivative. For an initial alignment $\gamma_0 \in (0,1)$, the IHT algorithm with Gaussian matrix of appropriate variance requires a computational cost of
    \begin{equation}\label{eqn:sparse_refresh_cost}
        O \left( \log\left(\frac{1 - Ce^{-ck'}}{1 - Ce^{-ck'} - \gamma_0}\right) k' n + k'\xi\right)
    \end{equation}
    where $k' \gtrsim s\log(n/s)$ and $C,c>0$ are constants independent of $n,s$.
\end{prop}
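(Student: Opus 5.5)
The plan is to account separately for the two ingredients of a refresh: acquiring the $k'$ compressed measurements $x=\Psi\nabla f(x_0)$, and running IHT for $T$ iterations on them. Throughout I condition on the event $\mathcal{A}$ that the Gaussian matrix $\Psi\in\mathbb{R}^{k'\times n}$, with entries $N(0,1/k')$, has RIP constant $\delta_{3s}<1/15$. Invoking \cite{baraniuk2008ripgauss} with $\eta=1/15$ and sparsity level $3s$, this holds with probability at least $p:=1-Ce^{-ck'}$ as soon as $k'\gtrsim s\log(n/s)$. The factor $3s$ and the value of $\eta$ only change constants, which are absorbed into the $\gtrsim$.

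\textbf{Measurements.} Each row $\psi_j$ of $\Psi$ gives the scalar $\inner*{\psi_j,\nabla f(x_0)}$, which is one directional derivative of $f$ at $x_0$ along $\psi_j$. Collecting all $k'$ of them costs $k'\xi$. Sampling $\Psi$ itself costs $O(k'n)$, and this is dominated by the IHT cost below.

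\textbf{Choosing $T$.} From \eqref{eqn:sparse_gamma_0}, the expected alignment is $\gamma_0=(1-\rho^2)p$. Hence for a target $\gamma_0<p$ we need $\rho^2 = 1-\gamma_0/p = (p-\gamma_0)/p$. By Corollary 1 of \cite{davies2009iht}, on $\mathcal{A}$ we have $\norm*{y_T-\nabla f(x_0)}\le 2^{-T}\norm*{\nabla f(x_0)}$, so it suffices to take
\[
T=\left\lceil \frac{\log(1/\rho)}{\log 2}\right\rceil=\left\lceil\frac{1}{2\log 2}\log\left(\frac{p}{p-\gamma_0}\right)\right\rceil .
\]
Substituting $p=1-Ce^{-ck'}$ gives exactly the logarithm in \eqref{eqn:sparse_refresh_cost}.

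\textbf{Cost of IHT.} Each update \eqref{eqn:iht_init} needs one product $\Psi y_t$, one product $\Psi^\top(x-\Psi y_t)$, and the thresholding $H_s$. The two products cost $O(k'n)$ each. Thresholding costs $O(n)$ via linear-time selection, or $O(n\log n)$ by sorting. Either is absorbed into $O(k'n)$, since $k'\gtrsim s\log(n/s)$ is at least of order $\log n$ in the relevant regime; otherwise I simply note $O(n\log n)\le O(k'n)$ up to constants. No further function evaluations are needed during IHT. The total is therefore $O(Tk'n+k'\xi)$, which is \eqref{eqn:sparse_refresh_cost}.

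\textbf{Expected main obstacle.} The delicate point is the bookkeeping of the failure probability. The alignment guarantee \eqref{eqn:sparse_gamma_0} only bounds the expected alignment, using $0$ on $\mathcal{A}^c$. Consequently $\gamma_0$ must satisfy $\gamma_0<1-Ce^{-ck'}$ for the logarithm to be finite, and the argument needs $\mathcal{A}$ to be independent of $x_0$. That independence holds because $\Psi$ is drawn freshly and independently at each refresh. I would state the condition $\gamma_0<p$ explicitly as the range in which the proposition is meaningful.
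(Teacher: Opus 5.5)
Your proposal is correct and follows the paper's proof essentially step for step. The paper uses the same accounting: $k'\xi$ for the measurements $\Psi\nabla f(x_0)$ via $k'$ directional derivatives along the rows of $\Psi$, then $O(k'n)$ per IHT iteration with thresholding absorbed, then $T=\lceil \log(1/\rho)/\log 2\rceil$ with $\rho^2=1-\gamma_0/(1-Ce^{-ck'})$ from the RIP event of probability at least $1-Ce^{-ck'}$. Your explicit restriction $\gamma_0<1-Ce^{-ck'}$ (needed for the logarithm to be finite) and your use of $\log(1/\rho)$ rather than the paper's typo $\log(1/p)$ are small improvements on the paper's write-up.
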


\begin{rem}[Structured Random Matrices]
While Gaussian $\Psi$ is convenient for analysis, it requires $O(k'n)$ storage and $O(k'n)$ time per matrix-vector product. Structured alternatives allow both to be reduced substantially, at the cost of a worse (but still logarithmic) dependence on $n$ in the sample complexity $k'$.

A \emph{subsampled Fourier} (or Hadamard) matrix formed by selecting $k'$ rows uniformly at random from the $n\times n$ discrete Fourier (Walsh-Hadamard) transform and rescaled by $1/\sqrt{k'}$, satisfies RIP of order $s$ with constant $\eta$ with high probability provided
\begin{equation}
    k' \gtrsim s\log^2(s)\log(n)\,\eta^{-2},
\end{equation}
the current best known bound, due to \cite{haviv2017restricted} (improving on the earlier $s\log^4(n)$-type bounds of \cite{rudelson2008sparsereconstruction}) . This is known to be close to optimal; \cite{blasiok2019improved} showed that $k' = \Omega(s\log s\log(n/s))$ rows are necessary for subsampled Hadamard matrices to satisfy RIP at all. Crucially, both $\Psi z$ and $\Psi^\top w$ can be computed via the FFT in $O(n\log n)$ time which is independent of $k'$, rather than the $O(k'n)$ required for a dense Gaussian $\Psi$. This improves the per-iteration IHT cost in Proposition~\ref{prop:sparse_refresh_cost} from $O(k'n)$ to $O(n\log n)$ whenever $k' = \omega(\log n)$ (the typical regime, since $k'\gtrsim s\log(n/s)$).
\end{rem}

\subsubsection{Analysis of Classical SSD in Sparse Settings}\label{sec:sparse_kozak}

Here we analyse the classical SSD method (i.e., \cite{kozak2021stochastic}) under the assumption that the function $f$ is sparse in some {\it unknown} basis (i.e. the function has a {\it low intrinsic dimension} and not necessarily limited to the axis-aligned sparsity discussed above). More precisely, we do not have any constraints on $R$ other than it is an $s-$dimensional orthogonal matrix in $\mathbb{R}^n$. Leveraging Equations \ref{eqn:sparse_defn}, we can actually prove that if $f$ has a low intrinsic dimension $s$, then the original SSD method only suffers from a $\frac{s}{d}$ multiplicative factor, with respect to the number of iterations, instead of the $\frac{n}{d}$ multiplicative factor.

More precisely we have the following theorem:
\begin{thm}\label{thm:sparse_classic}
Assume that $f$ has an intrinsic dimension of $s<n$. Let $\Pi,R,g$ be defined as above. Let $d,\alpha$ satisfy
\begin{equation*}
    \max\!\left\{1,2\log\!\left(\frac{2n^2}{9s}\right)\right\} \le d\le \frac{s}{16}, \qquad \alpha=\frac{n}{18sL}.
\end{equation*}
Then to obtain $\min_{1\le k\le N}\mathbb E\!\left[\|\nabla f(x_{k-1})\|^2\right] \le \epsilon^2$, we require
\begin{equation}\label{eqn:sparse_iter}
    N\ge \frac{36Ls}{d\,\epsilon^2}\bigl(f(x_0)-f^\ast\bigr),
\end{equation}
where $f^\ast$ is the optimal solution.
\end{thm}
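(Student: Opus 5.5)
We follow the standard descent-lemma argument for classical SSD. The key extra ingredient is that $\nabla f(x)=R^\top\nabla g(Rx)$ always lies in the fixed $s$-dimensional subspace $W=\mathrm{row}(R)$. The update is $x_k=x_{k-1}-\alpha P P^\top\nabla f(x_{k-1})$ with $P\in\RR^{n\times d}$ Haar distributed, $P^\top P=I_d$. Plugging this into \eqref{eqn:lip_ineq} and using $\|PP^\top z\|^2=\|P^\top z\|^2$ (valid since $P^\top P=I_d$) gives
\begin{equation*}
f(x_k)\le f(x_{k-1})-\alpha\|P^\top\nabla f(x_{k-1})\|^2+\frac{L\alpha^2}{2}\|P^\top\nabla f(x_{k-1})\|^2 .
\end{equation*}
This alone only yields the $d/n$ factor, because $\EE\|P^\top z\|^2=\frac dn\|z\|^2$. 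We therefore exploit sparsity in the smoothness step instead. The function $f$ only sees $\Pi x$, so we apply the descent lemma to $g$ at $y_{k-1}=Rx_{k-1}$ with step $R(x_k-x_{k-1})$. Since $g$ is $L$-smooth (as $R$ has orthonormal rows), this gives
\begin{equation*}
f(x_k)\le f(x_{k-1})-\alpha\|P^\top\nabla f\|^2+\frac{L\alpha^2}{2}\|\Pi PP^\top\nabla f\|^2 .
\end{equation*}
The gain is that the quadratic term involves $\Pi PP^\top$, i.e. $RP\,P^\top R^\top$ acting on $\nabla g$, rather than the full $PP^\top$.

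The main step is to control, for $z\in W$, the random quantities $a=\|P^\top z\|^2$ and $b=\|\Pi PP^\top z\|^2$. Write $Q=RP\in\RR^{s\times d}$; then $b=\|QQ^\top w\|^2$ with $z=R^\top w$. We will use the following concentration facts for a Haar $d$-frame:
\begin{itemize}
\item $\frac nd\,a$ concentrates around $\|z\|^2$;
\item $\frac ns\,Q^\top Q$ concentrates around $I_d$ in operator norm, by Johnson--Lindenstrauss / subspace embedding applied to the $s$-dimensional space $W$.
\end{itemize}
Together these give $\EE b\lesssim \frac sn\,\EE a$, more precisely $b\le \|Q^\top Q\|_{op}\,a\le \frac{9s}{8n}\,a$ with high probability. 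The condition $d\le s/16$ makes the deviation of $\frac ns Q^\top Q$ from $I_d$ small. The condition $d\ge 2\log(2n^2/(9s))$ makes the failure probability small enough that its contribution is absorbed: on the failure event we use the crude bound $b\le a$, weighted by a probability of order $9s/(2n^2)$. This is exactly the form suggested by the constants in the hypothesis. Altogether we aim for
\begin{equation*}
\EE\big[b\mid x_{k-1}\big]\le \frac{2s}{n}\,\EE\big[a\mid x_{k-1}\big], \qquad \EE\big[a\mid x_{k-1}\big]=\frac dn\|\nabla f(x_{k-1})\|^2 .
\end{equation*}
Establishing this bound with explicit constants is the hardest step. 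I would first try writing $P=G(G^\top G)^{-1/2}$ with $G$ Gaussian, and then use Gaussian singular value bounds for both $G$ and $RG$.

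With this bound in hand, taking conditional expectation gives
\begin{equation*}
\EE f(x_k)\le \EE f(x_{k-1})-\alpha\Big(1-\frac{L\alpha s}{n}\Big)\frac dn\,\EE\|\nabla f(x_{k-1})\|^2 .
\end{equation*}
With $\alpha=\frac{n}{18sL}$ the bracket is at least $17/18$, so the per-step decrease is at least $\frac{d}{36Ls}\,\EE\|\nabla f(x_{k-1})\|^2$ once constants are absorbed. Telescoping over $k=1,\dots,N$ and using $f\ge f^\ast$ gives
\begin{equation*}
\min_k\EE\|\nabla f(x_{k-1})\|^2\le \frac{36Ls\,(f(x_0)-f^\ast)}{dN},
\end{equation*}
and requiring the right-hand side to be at most $\epsilon^2$ yields \eqref{eqn:sparse_iter}.
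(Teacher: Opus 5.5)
Your plan follows the paper's proof step for step. It uses the descent lemma for $g$ along $y_k=Rx_k$ and the bound $\|QQ^\top w\|^2\le\|Q^\top Q\|_{\mathrm{op}}\|Q^\top w\|^2$, which is the paper's $\lambda_{\max}$ of $(G^\top G)^{-1/2}(RG)^\top(RG)(G^\top G)^{-1/2}$. It also uses the representation $P=G(G^\top G)^{-1/2}$ with singular-value bounds for $G$ and $RG$. Finally, it applies the crude bound $b\le a\le\|z\|^2$ on a failure event of probability $p\le 2e^{-d/2}\le 9s/n^2$, which is what the lower bound on $d$ buys.

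What needs correcting are the constants you attach to the key step. The high-probability claim $\|Q^\top Q\|_{\mathrm{op}}\le\frac{9s}{8n}$ is not what those tools give. It is in fact false for $d$ near $s/16$: for $s\ll n$ the top eigenvalue of $P^\top\Pi P$ concentrates near $\frac sn(1+\sqrt{d/s})^2\approx\frac{25}{16}\frac sn$. Correspondingly, $d\le s/16$ only keeps $\frac ns Q^\top Q$ bounded, not close to $I_d$.

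The Gaussian bounds give $\beta=\frac{(\sqrt s+2\sqrt d)^2}{(\sqrt n-2\sqrt d)^2}\le\frac{9s}{n}$. Adding the failure term $p\|z\|^2\le\frac{9sd}{n^2}\|z\|^2$ (using $d\ge1$) yields $\EE[b\mid x_{k-1}]\le\frac{18s}{n}\EE[a\mid x_{k-1}]$, not $\frac{2s}{n}$. This is exactly what $\alpha=\frac{n}{18sL}$ is calibrated for. Your bracket becomes $1-\frac{L\alpha}{2}\cdot\frac{18s}{n}=\frac12$ instead of $\frac{17}{18}$, and the per-step decrease is exactly $\frac{d}{36sL}\EE\|\nabla f(x_{k-1})\|^2$. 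So the theorem still follows as stated once the constants are fixed.

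Your $\frac{2s}{n}$ target is actually true in expectation, but it comes from a different computation than the one you outline. The exact Haar second moment $\EE[PP^\top\Pi PP^\top]$ gives $\EE[b]\le\frac{s+d+1}{n}\EE[a]$ directly, with no high-probability event needed.
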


\subsection{Finite Sum Structure with Minibatch Gradient Descent}
Many machine learning tasks are formulated within the framework of {\it empirical risk minimisation} (ERM) \cite{vapnik1991erm, vapnik1998statistic}. Suppose data $(x,y)$ are drawn from some distribution $\mathcal{D}$ and the aim is to find parameters $\theta$ minimising the population risk
\[
    R(\theta) = \mathbb{E}_{(x,y) \sim \mathcal{D}} \left[\ell(\theta; x,y)\right],
\]
where $\ell(\theta;x_i, y_i)$ is the loss incurred by parameters $\theta$ on the single example $(x, y)$ - e.g. squared loss $\ell = \frac{1}{2} (f_\theta(x) - y)^2$ for regression, or cross-entropy loss for classification. Since $\mathcal{D}$ is unknown, $R(\theta)$ cannot be computed or optimised directly. Instead, in practice, given a dataset of M i.i.d. samples $\{(x_i, y_i)\}_{i=1}^M \sim \mathcal{D}$, the standard approach is to minimise the {\it empirical risk}
\[
    f(\theta) = \frac{1}{M} \sum_{i=1}^M \ell(\theta;x_i,y_i),
\]
which serves as an unbiased estimator for the loss $R(\theta)$. This substitution of an intractable expectation by a finite sum over observed data is precisely what gives $f$ its {\it finite-sum structure}, and it is this structure that the mini-batch exploits \cite{bottou2018optimization}.

Because $f$ is the sum of per-example losses, the gradient is also an average:
\[
    \nabla f(\theta) = \frac{1}{M} \sum_{i=1}^M \nabla_\theta \ell(\theta;x_i, y_i).
\]
This linearity is what enables mini-batching possible: the gradient of an average is the average of the gradients. Full-batch gradient descent requires computing gradients over the entire dataset at every iteration, making each update prohibitively expensive when the dataset contains millions of training examples, as is common in modern machine learning applications \cite{bottou2018optimization,goodfellow2016deep}. This computational burden is further compounded by the high dimensionality of contemporary neural networks, which may contain billions of trainable parameters in large language models \cite{chen2025memoryefficientrso,liang2024onlinesubspace}. Consequently, practical learning algorithms typically employ stochastic or mini-batch gradients, since gradients computed from small subsets of the data often provide sufficiently informative descent directions while substantially reducing the computational cost per iteration \cite{bottou2018optimization}.

Concretely, instead of using all $M$ points, mini-batch gradient descent uniformly samples $m$ points from $[M]$ (we will consider sampling with replacement, but similar results hold for sampling without replacement), and computes the gradient estimator
\[
    g_B(\theta) = \frac{1}{m} \sum_{i \in B} \nabla_{\theta} \ell(\theta; x_i, y_i).
\]
The key property of this gradient estimator is that it is unbiased:
\[
    \mathbb{E}_B \left[ g_B(\theta) \right] 
    = \mathbb{E}_B \left[\frac{1}{m} \sum_{i \in B} \nabla_\theta \ell(\theta;x_i, y_i)  \right] 
    = \nabla f(\theta),
\]
which is what allows the method to still converge (in expectation) despite each step being "noisy" - the noise averages out over iterations. In this section, we show that our algorithm can also be applied in the mini-batch setting.

\subsubsection{Structure of The Problem and Motivations}
We will consider functions of the form 
\begin{equation}\label{eqn:mb}
    f(x) = \frac{1}{M}\sum_{i=1}^M f_i(x),
\end{equation}
where the size $M$ may potentially be larger than the dimension of the problem $n$. Here, we will have an additional assumption on the variance of the mini-batch gradient, which is a common assumption when dealing with mini-batch gradient descent \cite{bottou2018optimization}.
\begin{assump}\label{assump:variance-minibatch}
    (Variance-bound on mini-batch gradient): For a batch $B_m$ iid sampled from $[M]$ with $|B_m| = m$, we have $\mathbb{E} \left[ \left\| \frac{1}{m} \sum_{i \in B_m} \nabla f_i(x) - \nabla f(x) \right\|^2 \right] \leq \sigma_m^2$.
\end{assump}
If we assume the that this holds for any $m = 1$ with $\sigma_1^2$, then through simple algebra, we can set $\sigma^2_m = \sigma_1^2/m$. This means that the variance has an order of $m^{-1}$, where $m$ is the batch size.

\paragraph{Refresh Schedule.}
In the convergence of the standard SGD, the existence of the variance term requires the use of a vanishing step-size to control the impact of this factor. Similarly, our algorithm will employ a decaying step size to control the variance, which at the same time allows us to use an increasing number of steps before refresh. To this end, we consider the refresh schedule $r_{i+1} - r_i = i^\beta$, where $r_i$ is the iteration where the the alignment vector is updated for the $i^{th}$ time, and $\beta > 0$ is a parameter that we can choose depending on the problem. There can be other refresh schedule which could be considered, but our analysis will mainly cover this example and the efficacy of other schedules could be verified in a similar manner. More details on the analysis of the refresh schedule is in Section \ref{sec:mb_general}.

\paragraph{Mini-batch Variant Pseudocode.}
Here, we present the mini-batch variant of the proposed algorithm.
\begin{algorithm}
	\caption{Subspace SGD with Persistence of Memory (SSGDPM)}\label{Alg: SSD_minibatch}
	\begin{algorithmic}[1]
		\State{\textbf{Inputs:} $d, \delta, m, \beta$ \Comment{subspace rank, alignment threshold, batch size, refresh parameter}}
		\State{\textbf{Initialize:} $x_0$ \Comment{arbitrary initialisation}}
        \State $i \gets 0$
        \State $r_i \gets 0$
		\For {k = 1, 2, \ldots}
        \If{$i = 0$ or $k - r_i > i^\beta$}
            \State Generate a new $v_{k-1}$
            \State $i \gets i + 1$
            \State $r_i \gets k$
        \Else
            \State $v_{k-1} \gets v_{k-2}$
        \EndIf
		\State Generate $\tilde{P}_{k-1}$ orthogonal to $v_{k-1}$
        \State $\hat{v}_{k-1} \gets v_{k-1} / \|v_{k-1}\|$
        \State $P_{k-1} \gets \begin{pmatrix}
            \hat{v}_{k-1} & \tilde{P}_{k-1}
        \end{pmatrix}$
        \State Sample $B_{k-1}$ iid from $[M]$ with $|B_{k-1}| = m$
		\State $x_{k} \gets x_{k-1} - \alpha_{k-1} \frac{1}{m}\sum_{i \in B_{k-1}}P_{k-1} P_{k-1}^\top \nabla f_i(x_{k-1})$
		\EndFor
		\end{algorithmic}
\end{algorithm}

\subsubsection{Analysis for Convergence Iteration} \label{sec:mb_general}

Here, we show that under bounded variance assumption, the convergence rate of the algorithm can be arbitrarily close to the standard SGD rate. Before that, we will take a look at 2 interesting properties of under this setup. Firstly, for a desired initial alignment $\gamma_0$, suppose we consider the same oracle for the alignment vector, a batch size $m'$ satisfying
\begin{equation}
    m' \geq \frac{\gamma_0 \sigma_1^2}{(1 - \gamma_0)\| \nabla f(x) \|^2}
\end{equation}
is required. Secondly, the alignment between consecutive gradients will incur an additional variance error, which is described below.
\begin{prop}\label{prop:minibatch-refresh}
    Assuming that $f(x) = \frac{1}{M} \sum_{i=1}^M f_i(x)$ where each $f_i$ is $L$-smooth, then Algorithm \ref{Alg: SSD_minibatch} with $\beta \in (0, 1)$ has that $\mathbb{E} \left[ \langle v_k, \nabla f(x_k) \rangle^2 \right] \geq \delta \mathbb{E} \left[ \|\nabla f(x_k) \|^2 \right] - \eta_k \sigma_m^2$ for all $k \geq 1$ under the step-size schedule of $\alpha_k = \frac{1}{L\sqrt{k+1}}$ and $\eta_k = \sqrt{k} - \sqrt{r_i}$, where $r_i$ is the iteration of the $i^{th}$ update to the alignment vector.
\end{prop}

With these in place, we have the convergence rate of the mini-batch variant of the proposed algorithm under minimal assumptions.
\begin{thm}\label{thm:iter_mb_general}
    Suppose the objective function is \eqref{eqn:mb} and Assumption \eqref{assump:variance-minibatch} holds. Suppose $r_{i+1} - r_i = i^\beta$ for some $\beta \in (0,1)$, and let $k_0 > 0$ satisfy $k_0 = r_{i^\ast}$ with $i^\ast \geq \left(\frac{5}{\log(\gamma_0/2\delta)}\right)^{2/(1-\beta)}$. Let $\tau$ denote the index of the minimally obtained gradient, i.e. $\tau = \arg \min_{t \in [N]} \mathbb{E}[\| \nabla f(x_t)\|^2]$. Then to achieve an $\epsilon$ gradient, i.e. $\mathbb{E} \left[\| \nabla f(x_\tau) \|^2 \right] < \epsilon^2$, we will require
    \begin{equation}
        \epsilon^{-2} \left( 1- \left(\frac{k_0}{N}\right)^{\frac{\beta}{1 + \beta}} \right) + \frac{\sqrt{k_0}}{N^{\frac{\beta}{1+\beta}}} \lesssim N^{\frac{1-\beta}{2(1+\beta)}}
    \end{equation}
    iterations. Equivalently, we have $N = O \left(\epsilon^{-4 \left( \frac{1 + \beta}{1 - \beta} \right)} \right)$.
\end{thm}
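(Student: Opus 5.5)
The plan is the standard descent-lemma argument for SGD with decaying step size, with the alignment bound of Proposition \ref{prop:minibatch-refresh} supplying the descent term. I then split the horizon at $k_0$ and use the refresh schedule $r_{i+1}-r_i=i^\beta$ to control how many refreshes occur and how large the accumulated variance penalty $\eta_k\sigma_m^2$ becomes.

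\emph{Step 1: one-step inequality.} Apply \eqref{eqn:lip_ineq} to the update of Algorithm \ref{Alg: SSD_minibatch}, with $g_{k-1}=\frac1m\sum_{i\in B_{k-1}}\nabla f_i(x_{k-1})$. Take expectation over $B_{k-1}$ first; unbiasedness gives
\[
\mathbb{E}\inner{\nabla f(x_{k-1}),P_{k-1}P_{k-1}^\top g_{k-1}}=\mathbb{E}\norm{P_{k-1}^\top\nabla f(x_{k-1})}^2 .
\]
Next take expectation over $\tilde P_{k-1}$ using \eqref{eqn:randmat}. This gives
\[
\mathbb{E}\norm{P^\top\nabla f}^2=\Bigl(1-\tfrac{d}{n-1}\Bigr)\mathbb{E}\inner{\hat v,\nabla f}^2+\tfrac{d}{n-1}\mathbb{E}\norm{\nabla f}^2 .
\]
Since $P$ is a projection, the quadratic term is bounded by $\frac{L\alpha_{k-1}^2}{2}\bigl(\mathbb{E}\norm{P^\top\nabla f}^2+\sigma_m^2\bigr)$. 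With $\alpha_k=\frac{1}{L\sqrt{k+1}}\le 1/L$, the net coefficient on $\mathbb{E}\norm{P^\top\nabla f}^2$ is at least $\alpha_{k-1}/2$.

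\emph{Step 2: insert the alignment bound.} Use Proposition \ref{prop:minibatch-refresh} to obtain
\[
\mathbb{E}[f(x_k)]\le \mathbb{E}[f(x_{k-1})]-c\,\alpha_{k-1}\delta\,\mathbb{E}\norm{\nabla f(x_{k-1})}^2+\alpha_{k-1}\eta_{k-1}\sigma_m^2+\tfrac{L}{2}\alpha_{k-1}^2\sigma_m^2 .
\]
Here $\delta$ should be read as $\gamma_0/2$ or similar along a refresh window. The condition on $i^\ast$ is what guarantees that the log-decay of alignment within a window of length $i^\beta$ stays above $\delta$ once $i\ge i^\ast$, mirroring Corollary \ref{cor:ref_bound}. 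This is exactly why the sum starts at $k_0=r_{i^\ast}$.

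\emph{Step 3: telescope from $k_0$ to $N$.} Bound the gradient terms by $\epsilon^2\sum\alpha_k$, then estimate each piece separately:
\begin{itemize}
\item $\sum_{k_0}^N\alpha_k\asymp(\sqrt N-\sqrt{k_0})/L$.
\item $\sum\alpha_k^2\sigma_m^2\lesssim\log N$, which is lower order.
\item The variance penalty. Within the window $[r_i,r_{i+1})$ we have $\eta_k\le\sqrt{r_{i+1}}-\sqrt{r_i}\approx i^\beta/(2\sqrt{r_i})$ and $r_i\asymp i^{1+\beta}$, so $\eta_k\lesssim i^{(\beta-1)/2}$. Summing $\alpha_k\eta_k$ over the window gives $\sim i^\beta\cdot i^{-(1+\beta)/2}\cdot i^{(\beta-1)/2}=i^{\beta-1}$. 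Over the $I\asymp N^{1/(1+\beta)}$ windows this totals $\lesssim I^{\beta}=N^{\beta/(1+\beta)}$.
\end{itemize}
Dividing by $\sum\alpha_k\asymp\sqrt N(1-\sqrt{k_0/N})$ and rearranging should give the displayed relation. The term $(k_0/N)^{\beta/(1+\beta)}$ comes from the window count between $k_0$ and $N$, and the $\sqrt{k_0}$ term from the lower limit of the step sum. Solving gives $N^{(1-\beta)/(2(1+\beta))}\gtrsim\epsilon^{-2}$, i.e. $N=O(\epsilon^{-4(1+\beta)/(1-\beta)})$.

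\emph{Main obstacle.} The hardest step is the bookkeeping in Step 3. I need to get exactly the exponent balance $(1-\beta)/(2(1+\beta))$, and to make sure the variance penalty, rather than the optimisation term, dominates. I would first verify the window-sum scaling carefully with integral comparisons $r_i\asymp i^{1+\beta}/(1+\beta)$, and only then match the constants to the form in the statement.
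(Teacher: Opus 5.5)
Your proposal is correct and follows essentially the same route as the paper: the smoothness descent inequality with batch unbiasedness and $\mathbb{E}[\tilde P\tilde P^\top]=\frac{d}{n-1}(I-\hat v\hat v^\top)$, the alignment bound of Proposition~\ref{prop:minibatch-refresh} with $\gamma_{k-1}>\delta$ guaranteed from $k_0=r_{i^\ast}$ onwards, and the window-by-window estimate $\sum_k\alpha_k\eta_k\lesssim\sum_i i^{\beta-1}\lesssim N^{\beta/(1+\beta)}$ divided by $\sum_k\alpha_k\asymp\sqrt N-\sqrt{k_0}$. The only difference is cosmetic: you average over the batch first and keep the projection in the quadratic term, which gives the coefficient $\alpha_{k-1}/2$ directly, whereas the paper drops the projection and must impose $\alpha_{k-1}<\frac{2}{L}\bigl((1-\frac{d}{n-1})\delta+\frac{d}{n-1}\bigr)$.
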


\begin{rem}
    Although this analysis assumes that the refresh schedule follows a specific structure, in general, this can also be fixed like in the vanilla algorithm, or even have another formulation by itself. The analysis for this would be similar to the schedule we are assuming here, which is discussed in Appendix \ref{sec:mb_convergence}.
\end{rem}

\section{Proof Ideas}\label{sec:proof_ideas}
In this section, we will briefly describe the techniques and tools required to show the results in Section \ref{sec:results} and \ref{sec:applications}. For a more detailed analysis, we refer the reader to the appendix.

\subsection{General Algorithm}\label{sec:idea_general}
In the simple variant of our algorithm, our aim is to obtain an $\Theta(1)$ dependence on the dimension of the projection, as compared the SSD algorithm. From the $L-$smoothness property of the function, we get
\begin{equation}\label{eqn:convergence_initial_eqn}
    f(x_k) - f(x_{k-1}) \leq -\alpha \norm*{P_{k-1}^\top \nabla f(x_{k-1})}^2 + \frac{\alpha^2 L}{2} \norm*{P_{k-1} P_{k-1}^\top \nabla f(x_{k-1})}^2.
\end{equation}
In expectation, the projected gradient is lower bounded by $((1 - d/(n-1))\gamma_{k-1} + d/(n-1)) \norm*{\nabla f(x_{k-1})}^2$, by the construction of our algorithm. The rest of the proof would then follow by adding steps $1$ to $N$ and rearranging the terms. We note that the SSD algorithm does not have the $\gamma_{k-1} > \delta$ term in this expression, which would result in an overall dependence of $n/d$ in the final expression for the iteration complexity.

In the next part, we show that given an initial alignment of $\gamma_0$, the alignment vector remains {\it weakly correlated} with the next gradient. This is possible as gradient updates are local and we ask how quickly the correlation with the moving gradient decays. We first rewrite
\begin{align*}
    \inner*{\nabla f(x_k), \hat v_k} 
        &= \inner*{\vkkh, \nabla f(x_{k-1})} + \inner*{\vkkh, \nabla f(x_k) - \nabla f(x_{k-1})},
\end{align*}
with $v_k = v_{k-1}$. Squaring this and using Cauchy-Schwartz Inequality on the second term yields
\[
    \inner*{\vkh, \nabla f(x_k)}^2 \geq \inner*{\vkkh, \nabla f(x_{k-1})}^2 - 2\norm*{\nabla f(x_k) - \nabla f(x_{k-1})} \abs*{\inner*{\vkkh, \nabla f(x_{k-1})}}.
\]
The first term follows by the induction hypothesis. The second term is controlled using the $L-$smoothness of the property of the gradient, which means we have to bound the term $\norm*{P_{k-1} P_{k-1}^\top \gfk}$. This is done through careful analysis of the projection $P_{k-1} P_{k-1}^\top$ with conditional arguments. To translate back to the correct norm (i.e. $\norm*{\gfk}$ instead of $\norm*{\gfkk}$, we need an upper bound for $\norm*{\nabla f(x_k)}$ in terms of the norm of the previous gradient, givingus a formula for $\gamma_k$ in terms of $\gamma_{k-1}$. This recurrence relation then allows us to find the maximum number of steps before the alignment drops below $\delta$. Because gradient updates are local in terms of the step size, we are expected to have the maximum number of steps before a refresh is required to also depend on the step size, as shown in Corollary \ref{cor:ref_bound}.

The last part of this section is dedicated to finding a tight upper bound for the sum of $\gamma_k$. Although we can simply lower bound the individual alignments by $\delta$ to get $r\delta$ for $r$ steps of re-using the same alignment vector, doing so will not give us a sharp bound in terms of the dependence on $\delta$. We explicitly study the recurrence relation of $\gamma_k$ to obtain a sharper lower bound on the sum. Instead of $N\times\delta$, we see that it is $N \times \left(\frac{d}{n-1} + \frac{4\alpha L}{\log(\gamma_0/\delta)}\gamma_0\right)$.

All bounds in the proof are derived by first conditioning on the previous iteration, to get a descent inequality for the current step, before taking full expectations.

The computational complexity can then be obtained from the iteration complexity by noting that the cost per iteration during refresh is $nd + \nu$, while the cost per iteration during the non-refresh steps is $nd + d\xi$.

\subsection{Mini-batch Setting}
Here, we will discuss the general idea to prove convergence for the mini-batch variant of the algorithm, and refer the readers to the more detailed analysis in the respective Appendix \ref{sec:mb_convergence}. The general steps to take are rather consistent with the general algorithm, as described in Section \ref{sec:idea_general}. The only difference here is that the descent direction is $P_{k-1} P_{k-1} \nabla g_{k-1}$ instead of $P_{k-1} P_{k-1} \gfkk$. This means that we require to control the term $\norm*{P_{k-1} P_{k-1}^\top \nabla g_{k-1}}$ in terms of $c\norm*{\gfkk}$ and the noise.

Through direct computation, we will obtain that
\[
    \mathbb{E} \left[ \inner*{\vkh, \gfk}^2 \right]
        \geq \gamma_k \mathbb{E} \left[ \norm*{\gfk}^2 \right] - \eta_k \sigma_m^2,
\]
where the variance term $\sigma_m$ will be added directly to the descent equation \eqref{eqn:convergence_initial_eqn}. Then, all that remains is to analyse the recurrence relation of $\gamma_k$ like before. This time however, because of the decaying step size, $\gamma_k$ decays at an increasingly slower rate as $k$ increases, meaning we are allowed to use the same alignment vector for even more steps, motivating the use of $r_{i+1} - r_i = i^\beta$ for some $\beta \in (0,1)$. The recurrence is then solved with this assumption to find that it suffices to have
\[
    i^{(\beta-1)/2} \lesssim \frac{\log(\gamma_0/2\delta)}{5},
\]
for every refresh step $r_i$.

Finally, equation \eqref{eqn:convergence_initial_eqn} is summed from $k=1$ to $N$, with careful analysis of the variance contributions $\eta_k \sigma_m^2$ and also the number of updates to the alignment vector needed in $N$ steps, i.e. $s$ such that $r_s = N$, which turns out to be $O(N^{1/(1+\beta)})$.

\subsection{Local Regime}
The proof establishes that the gradient iterates $g_k = \nabla f(x_k)$ aligns progressively with the leading eigenvector $u_1$ of the Hessian $H = \nabla^2 f(x^\ast)$.  We measure alignment through the quantity
\begin{equation*}\label{eq:tk-def}
  t_k \;:=\; \frac{\norm{\Proj_{u_1^\perp} g_k}}{\norm{\Proj_{u_1} g_k}},
\end{equation*}
which is the tangent of the angle between $g_k$ and $u_1$. Showing $t_k \to 0$ is equivalent to showing that $g_k$ converges in direction to $u_1$. The argument proceeds in two phases.

\paragraph{Intuition and Key Recursion.}
The starting point is a gradient recursion obtained by Taylor-expanding the gradient of $f$ around $x^\ast$ and substituting the projected-gradient update \eqref{eqn:update_step}. A short calculation gives
\begin{equation}\label{eq:overview-rec}
  g_{k+1} \;=\; M g_k \;+\; \alpha H E_k \;+\; r_k,
  \qquad M \;=\; I - \alpha H,
\end{equation}
where $E_k = (I-P_kP_k^\top)g_k$ is the error due to the random projection of the algorithm and $r_k = r(e_{k+1}) - r(e_k)$ collects the Taylor remainders from the Hessian Lipschitz condition.  Because $\norm{r_k} = O(\norm{g_k}^2)$, the remainder is second order in the gradient norm and becomes negligible as the iterates approach $x^\ast$. The directional behaviour of $g_k$
is therefore governed by the first two terms of \eqref{eq:overview-rec}.

\paragraph{Phase 1: Gradient Alignment via a Perturbed Power Iteration.}
Observe that in the absence of projection errors ($E_k = 0$) and Taylor error terms, the recursion \eqref{eq:overview-rec} reduces to $g_{k+1} = Mg_k$. By assumption, $M = I - \alpha H$ has eigenvalues $\mu_i = 1-\alpha\lambda_i$ satisfying $\mu_1 > \mu_2 > \cdots > \mu_n > 0$, and iterating $g_{k+1} = Mg_k$ is precisely a power iteration on $M$. The component of $g_k$ along $u_1$ stays relatively constant, while every orthogonal component decays at a strictly smaller rate $\frac{\mu_i}{\mu_1}$. Consequently $t_k$ contracts at a rate $\frac{\mu_2}{\mu_1} < 1$, and $g_k$ aligns to $u_1$ geometrically fast.

In the presence of the projection error $\alpha H E_k$, one has to bound the numerator and denominator of $t_{k+1}$ separately. We utilise the alignment assumption $\langle \hat v_k, g_k\rangle^2 \ge \delta\norm{g_k}^2$ to show that this error is of order $\sqrt{1-\delta}$. Specifically, we have that
\begin{itemize}
  \item \emph{Upper bound on the numerator.}  Because $M$ and
        $\Proj_{u_1^\perp}$ share the same eigenbasis, they commute and
        \[
          \norm{\Proj_{u_1^\perp} M g_k} \;\leq\; \mu_2\,\norm{\Proj_{u_1^\perp} g_k}.
        \]
        The projection-error term $\alpha\norm{\Proj_{u_1^\perp} H E_k}$ contributes an additive noise of order $\sqrt{1-\delta}\,\norm{g_k\vphantom{k^\perp}}$, controlled by the alignment parameter $\delta$. We can bound the contribution from $H$ by $\lambda_n$ which is constant in terms of the free parameter $\delta$.
  \item \emph{Lower bound on the denominator.} The leading contribution is $\mu_1\norm{\Proj_{u_1} g_k}$, reduced by a perturbation of the same order $\sqrt{1-\delta}\,\norm{g_k\vphantom{k^\perp}}$. The noise in the denominator is similar controlled as in the numerator case by utilising the alignment property we assumed.
\end{itemize}
The residual terms are controlled by using a pre-established exponential decay estimate for $\norm*{g_k}$, whose own probability guarantee will show in the final bound. These two bounds can then be combined and expressing $\norm{g_k}$ in terms of $t_k$ via $\norm{g_k} = \norm{\Proj_{u_1}g_k}\sqrt{1+t_k^2}$ yields a scalar recursion of the form
\begin{equation*}
  t_{k+1} \;\lesssim\; \frac{\tilde \mu^{(k)}_2}{\mu_1}\,t_k \;+\; C(\delta),
\end{equation*}
where $C(\delta) \to 0$ as $\delta \to 1$ and $\tilde \mu^{(k)}_2$ is a perturbation of $\mu_2$ which also depends on $\delta$. We then find the condition on $\delta$ such that we ensure that the leading factor is $<1$ so that the above is a \emph{perturbed contraction}. The standard fixed-point iteration argument shows that $t_k$ converges to a neighbourhood of zero with radius $O\bigl(C(\delta)/(1-\max_k \tilde \mu_2^{(k)}/\mu_1)\bigr)$. By choosing $\delta$ sufficiently close to $1$, in other words, by requiring the adaptive direction $\hat v_k$ to be sufficiently well aligned with $g_k$ one can make this neighbourhood arbitrarily small. In particular, there exists a finite iterate $k_1 < K_\varepsilon$ at which $t_{k_1} = O(1-\delta)$, or equivalently,
\begin{equation*}
  \langle u_1, g_{k_1}\rangle^2 \;\ge\; \delta_0^2 \norm*{g_{k_1}}^2,
\end{equation*}
for a parameter $\delta_0$ that can be made arbitrarily small by increasing $\delta$.  All concentration events required for this phase hold simultaneously with probability at least $1 - k_0p_{decay}(\hat \tau, \delta, d)$ by a union bound, which we can improve to $1 - e^{-\frac{1}{8}k_1(1 - p_{decay}(\hat \tau, \delta, d))}$ by an application of Bernstein Inequality. Next, we set $g_{k_1} = v_{k_1}$ and for phase 2, have that for $k \geq k_1$,
\[
    \langle u_1, \hat v_{k}\rangle^2 \;\ge\; \delta_0^2.
\]

\paragraph{Phase 2: Frozen Direction and Continued Alignment.}
Having shown that $g_{k_1}$ (and hence the alignment vector $v_{k_1}:= g_{k_1}$ used from this point on) is $\delta_0$-aligned with $u_1$, we now ask whether the alignment persists once $v_k$ is frozen at $v_{k_1}$ for $k > k_1$, rather than re-estimated at every few steps. This is the regime relevant to the "no-refresh" claim: the guiding direction is fixed, so any further error induced by the algorithm comes from the fresh random matrix $\tilde P_k$ alone.

Unlike Phase~1, there is no standing alignment hypothesis to fall back on here, so the numerator and denominator bounds on the ratio $t_i^{(k)} := |u_i^\top g_k|/|u_1^\top g_k|$ (for each $i \geq 2$) must be derived directly from a Johnson-Linderstrauss-type concentration bound. With probability $1 - 2e^{-cd\tau_i^2}$, the random projection $\tilde P_k$ approximately preserves the inner products $\inner*{\tilde P_k^\top u_i, \tilde P_k^\top g_k}$ up to a slack $\tau_i$ relative to their projections onto $v_{k_1}^\perp$. More importantly, the same result holds for the re-scaled random matrix $\hat P_k$ which we use instead in this phase. Feeding this into the same expansion in \eqref{eq:overview-rec} yields, for every $i \geq 2$, a scalar recursion of the same shape as in Phase~1,
\[
    t_i^{k+1} \lesssim \frac{\tilde \mu_i^{(k)}}{\mu_1} t_i^{(k)} + \tilde \epsilon_i^{(k)},
\]
where now the slack term $\tilde \epsilon_i^{(k)}$ is driven by the chosen concentration parameter $\tau_i$ (in place of $\sqrt{1-\delta}$ as in Phase~1) together with the same negligible exponentially-decaying remainder. As before, requiring $\tilde \mu_i^{(k)} < \mu_1$ gives a contraction condition, which translates into an explicit upper bound on $\tau_i$, of order $(\lambda_i - \lambda_1)/\lambda_i \,\cdot\, \sqrt{1-\delta_0}$. Eigen-gaps that are small relative to $\lambda_1$ force tighter concentration, i.e. a higher-probability (larger $d$) requirement.

One-subtlety remains, because of the re-scaling of the random matrix $\tilde P_k \rightarrow \hat P_k$, the effective projection operator onto the random subspace $\hat P_k \hat P_k^\top$ has a slightly worse operator-norm, of order $(n-1)/d$ instead of 1. This inflates the remainder term by the same factor, and mildly perturbs the step-size required to ensure that we still have a exponential rate of decay, but does not change the qualitative picture that the dominant contractions are still goverened by the spectral gaps $\lambda_i - \lambda_1$. One only needs $k_1 = O(\log(n/d))$ steps of "burn-in" before this extra factor becomes immaterial.

Running the same perturbed-fixed-point argument componentwise, each $t_i^{(k)}$ is shown to be non-increasing (given the desired condition on $\tau_i$), so the aggregate quantity $\sum_{i\geq2} (t_i^{(k)})^2$ or equivalently $\norm{\Proj_{u_1^\perp} g_k}/|u_1^\top g_k|$ stays controlled for the enture horizon $k \in (k_1, K_\epsilon)$. 

Finally, combining the events across all $i \in [n]$ and all $k \in (k_1, K_\epsilon)$ via a union bound (using a successive conditioning argument for each consecutive event, together with the Phase~1 probability for reaching the initial alignment $\delta_0$), we obtain the stated overall success probability in \eqref{eqn:local_regime_prob}. Translating the resulting bound on $\tau_i$ back into a bound on $\norm*{\Proj_{u_1^\perp}g_k}/|u_1^\top g_k|$ gives the relationship between the target terminal correlation floor $\delta_1$ and the required initial threshold $\delta_0$. A tighter final alignment guarantee requires pushing $\delta_0$ closer to 1, i.e. a longer or more tightly concentrated Phase~1.
\section{Numerical Experiments}\label{sec:experiments}

In this section, we demonstrate our algorithms numerically for both the sparse and mini-batch variant. These experiments were run on an HPC cluster with $12$ CPU cores and $96$GB ram, with 2,000 concurrent directional derivatives computed at each iteration. For a Haar-distributed matrix with the lower dimension $d \ll n$ (such as $d = \log(n)$), it is orthogonal to any fixed vector with high probability, which allowed us to forgo the need to orthogonalise the matrix to lie in $v_k^\perp$ in the experiments. Each experiment is run with 5 seeds, with the min-max error bars plotted.

\paragraph{Sparse Setting.}
To illustrate the theoretical improvements of the computational complexity under the sparse setting, we conducted numerical experiments on a Rosenbrock function. A general Rosenbrock function has the form
\begin{equation}
    f(x) = \sum_{i=1}^M a_i(x_i - b_i)^2 + c_i(x_{i+1} - d_ix_i^2)^2,
\end{equation}
where $a_i, b_i, c_i, d_i$ are constants which define the structure of the function, and the objective is to minimise this function. To sparsify this function, we simply add a prefactor $m_i \in \{0, 1\}$ in front of each summand. In this experiment, we choose $M = 50,000$ with a sparsity factor of $250$. i.e. only $125$ $m_i's$ are $1$ (which are guaranteed to be non-consecutive). 

We run the standard gradient descent algorithm, Kozak's original SSD algorithm, and the 2 variants of our proposed algorithm on this test function. The step sizes for all algorithms are taken to be the same at $0.01$. For our algorithm where the alignment vector is estimated using IHT, we choose to run the algorithm with an upper bound on the sparsity, $s \leq s^\ast$. Using this $s^\ast$, we then select $k' = s^\ast \log(n)$ as an estimate on the number of rows of the random matrix to draw to compute $x$ in Equation \eqref{eqn:iht_init} and for the subsequent IHT steps. We test 2 values of $s^\ast$ satisfying $p := k'/n = 0.1, 0.2$.

For the value of $d$ in both Kozak's SSD algorithm and our algorithm, we opted to use $d=10$. Numerically, the convergence rates did not show much difference when $d$ varied within a small range around $10$, hence we only showed it with $d=10$. All results are plotting with $f(x)$ against time $s$.

Our proposed algorithm used 10GB of memory for the one with $p = 0.2$ and 5.4GB for $p = 0.1$, GD used 56.4GB and SSD used 0.6GB.

\paragraph{Minibatch Setting.}
For the minibatch setting, we consider a regularised multi-layer neural network (MLP) on the mnist dataset. We consider a function of the form
\begin{equation}
    f(x) = \frac{1}{N} \sum_{i=1}^N \ell(f_\theta(x_i) - y_i) + \lambda \norm*{\theta}_2^2,
\end{equation}
where the cross-entropy loss is used as the loss function, and the regularising coefficient is taken to be $0.005$. The architecture we choose is a 3-layer MLP, where the 2 hidden layers are dimensions $64$ each. The output layer is a 10 dimension vector which is then given into the loss function through the cross-entropy loss. The weights of the network are initialised with the He initialisation. 

We benchmark our algorithm against the vanilla SGD and Kozak's SSD, where we run a variant of Kozak's SSD by projecting the minibatch gradient instead of the full gradient at each iterate. The batch sizes $m$ in both the kozak's algorithm and SGD were taken to be $128$, while this same batch size was used to update the new alignment vector in our proposed algorithm, with the consequent iterations done with batch sizes $16$. To keep the noise from the gradient estimator small, the batch $B_{k}$ was taken to be $B_k \subset B_{r_i}'$ for $r_i \leq k < r_{i+1}$, and the step size follows $\alpha_{k} = \frac{\alpha_0}{\sqrt{r_i}}$. The step sizes for the minibatch variant of Kozak's method and SGD uses $\alpha_k = \frac{\alpha_0}{\sqrt{k}}$. Since Kozak's algorithm has a much higher iteration complexity, the step size decays to 0 much faster than the other algorithms, hence we decided to work with $\alpha_0 = 10$ for the minibatch variant of Kozak's algorithm while $\alpha_0 = 0.1$ for our proposed algorithm and SGD. All random subspaces are taken with $d = 10$. 

Our algorithm and SGD utilised 72GB of memory (because the refresh size used in our algorithm is the same as the batch size used in SGD), while the minibatch variant of Kozak's algorithm used 4GB. Although our algorithm used much more memory as compared to Kozak's algorithm, the number of iterations where such a capacity is needed is much less than the total iterations; it is only needed during the steps where the alignment vector is updated.

The results of both experiments can be observed in the figures below.

\begin{figure}[htbp]
    \centering

    \begin{subfigure}{0.48\textwidth}
        \centering
        \includegraphics[width=\linewidth]{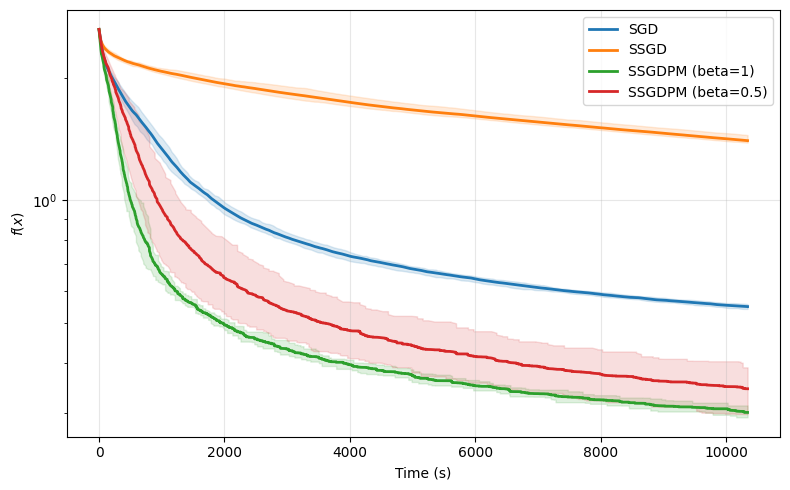}
        \label{fig:mlp}
    \end{subfigure}
    \hfill
    \begin{subfigure}{0.48\textwidth}
        \centering
        \includegraphics[width=\linewidth]{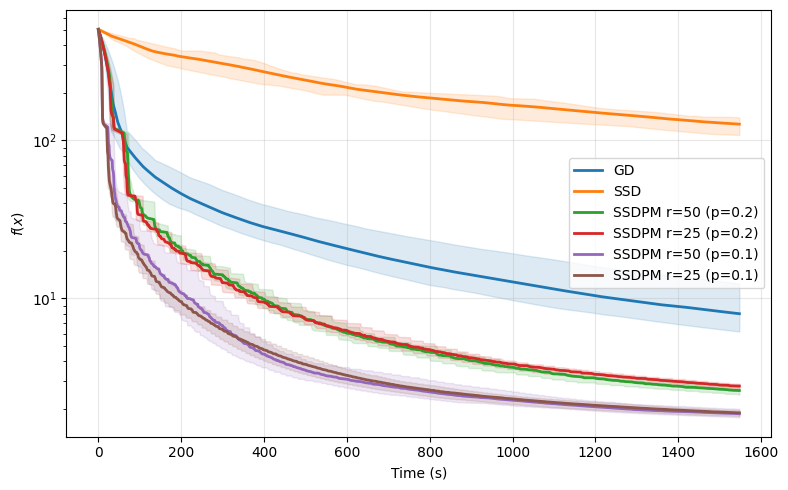}
        \label{fig:rosenbrock}
    \end{subfigure}

    \caption{The left figure is the minibatch setting with mnist dataset, while the right figure is the sparse setting with the Rosenbrock function.}
    \label{fig:combined}
\end{figure}
\section{Acknowledgements}
The authors would like to thank Alexandre d'Aspremont for illuminating discussions. AT was supported by the JSPS Grant-in-Aid for Scientific Research(B) JP23K28041. SG was supported in part by the NUS Dean’s Chair Associate Professorship E-146-00-0037-01 and the Singapore MOE grants A-8002014-00-00 and A-8003802-00-00.

\newpage
\bibliographystyle{abbrv}
\bibliography{refs}

\newpage 

\appendix

\section{Mathematical Tools Needed}
We study several properties of the random matrix $\tilde P_k$ which will be of use in the later proofs.
\begin{prop}[\cite{vershynin2018highdimensional} Lemma 5.3.2]
    Let $Q$ be a projection from $\mathbb{R}^n$ to a random $d$-dimensional subspace uniformly distributed in $G_{n, d}$. For a fixed vector $z \in \mathbb{R}^n$, we have
    \begin{align}
        \mathbb{E} \left[\norm*{Qz}^2\right] 
            &= \frac{d}{n} \norm{z}^2, \\
        (1-\tau) \frac{d}{n} \norm{z}^2 
            \leq \norm*{Qz}^2 
            &\leq (1 + \tau) \frac{d}{n} \norm{z}^2
    \end{align}
    with probability at least $1 - 2\exp{(-c\tau^2 d)}$. 
\end{prop}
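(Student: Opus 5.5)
By rotation invariance I would reduce to a problem about a uniformly random point on the sphere and then apply concentration of Lipschitz functions on $S^{n-1}$. Without loss of generality $z \neq 0$ and, by homogeneity, $\norm{z} = 1$. A uniformly distributed subspace in $G_{n,d}$ can be written as $U E$, where $E = \mathrm{span}(e_1,\dots,e_d)$ and $U$ is a Haar-distributed orthogonal matrix. Then $\norm{Qz} = \norm{P_E U^\top z}$, where $P_E$ is the coordinate projection onto $E$. Since $U^\top z =: X$ is uniformly distributed on $S^{n-1}$, we get
\[
\norm{Qz}^2 \stackrel{d}{=} \sum_{i=1}^d X_i^2 .
\]

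\textbf{Expectation.} By exchangeability of the coordinates of $X$, each $\EE[X_i^2]$ is the same. They sum to $\EE\norm{X}^2 = 1$, so each equals $1/n$. Summing over $i \le d$ gives $\EE\norm{Qz}^2 = d/n$, which is the first identity.

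\textbf{Concentration.} Consider $F(x) = \norm{P_E x}$ on $S^{n-1}$. It is $1$-Lipschitz, so by concentration of measure on the sphere (Vershynin, Thm.~5.1.4),
\[
\mathbb{P}\bigl(|F(X) - \EE F(X)| \ge t\bigr) \le 2e^{-c n t^2}.
\]
The sub-gaussian tail also gives $\mathrm{Var}\,F \le C/n$. Together with $\EE F^2 = d/n$, this implies
\[
\sqrt{d/n} - C/\sqrt{n} \le \EE F \le \sqrt{d/n}.
\]
So the mean of $F$ is within a relative error $O(1/\sqrt d)$ of $\sqrt{d/n}$.

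Now take $t = \tfrac{\tau}{3}\sqrt{d/n}$. On the good event,
\[
\bigl|F - \sqrt{d/n}\bigr| \le \Bigl(\tfrac{\tau}{3} + \tfrac{C}{\sqrt d}\Bigr)\sqrt{d/n}.
\]
The failure probability is $2e^{-c\tau^2 d/9}$.

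\textbf{Main obstacle: the centering error.} When $C/\sqrt d \le \tau/3$, the deviation is at most $\tfrac{2\tau}{3}\sqrt{d/n}$. Squaring gives $(1 \pm 2\tau/3)^2 \subset [1-\tau, 1+\tau]$ for $\tau \le 3/4$, which yields the two-sided bound on $\norm{Qz}^2$. In the complementary regime $\tau^2 d < 9C^2$, the claimed probability $1 - 2e^{-c\tau^2 d}$ is negative once the constant $c$ is shrunk. The statement is then vacuous, and it suffices to adjust the universal constant $c$.

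\textbf{Large $\tau$.} For $\tau > 3/4$, the lower bound holds for free after reducing to $\tau = 3/4$. The upper tail is handled directly with $t = \tfrac{\tau}{3}\sqrt{d/n}$, since $(1 + \tau/3 + \tau/3)^2 \le 1 + \tau$ fails only for large $\tau$. In that case one uses $t \asymp \sqrt{\tau d/n}$ instead, giving $e^{-c\tau d} \le e^{-c\tau^2 d}$ only when $\tau \le 1$. So I would state the result for $\tau \in (0,1)$, which is the range used in the paper.
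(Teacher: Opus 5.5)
Your route is the same as the proof of the cited Lemma~5.3.2 in Vershynin; the paper gives no argument beyond the citation. You use rotation invariance to reduce to a fixed coordinate projection of a uniform point on $S^{n-1}$, symmetry for the expectation, and concentration of the $1$-Lipschitz map $x\mapsto\norm{P_E x}$ on the sphere. You also replace $\EE F$ by $\sqrt{\EE F^2}=\sqrt{d/n}$ at a cost of $O(1/\sqrt n)$. Vershynin absorbs that shift into the sub-gaussian norm of $F-\sqrt{d/n}$, which is equivalent to your case split.

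Your caveat on $\tau$ is also correct. Vershynin's statement is for the unsquared norm with factors $(1\pm\varepsilon)\sqrt{d/n}$, valid for all $\varepsilon>0$. For the squared version as transcribed in the paper, the upper tail for $\tau>1$ is only of order $e^{-c\tau d}$. So restricting to $\tau\in(0,1)$, the range used later, is genuinely needed.

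There is, however, a concrete error in your squaring step. The claim $(1\pm 2\tau/3)^2\subset[1-\tau,1+\tau]$ for $\tau\le 3/4$ is false. On the upper side, $(1+2\tau/3)^2-(1+\tau)=\tau/3+4\tau^2/9>0$ for every $\tau>0$. On the lower side, $(1-2\tau/3)^2-(1-\tau)=\tau(4\tau/9-1/3)$, which is negative exactly when $\tau<3/4$. In short, a relative deviation of $2\tau/3$ in $\norm{Qz}$ becomes roughly $4\tau/3$ in $\norm{Qz}^2$. The same false inequality reappears in your ``large $\tau$'' paragraph.

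The fix only changes constants: your total relative deviation budget must be at most $\tau/3$.
\begin{itemize}
\item Take $t=\frac{\tau}{6}\sqrt{d/n}$ and work in the regime $C/\sqrt d\le \tau/6$. On the good event, $|F-\sqrt{d/n}|\le \frac{\tau}{3}\sqrt{d/n}$.
\item Then $(1+\tau/3)^2\le 1+\tau$ for $\tau\le 3$, and $(1-\tau/3)^2\ge 1-2\tau/3\ge 1-\tau$.
\item The failure probability is $2e^{-c\tau^2 d/36}$.
\item The complementary regime $\tau^2 d<36C^2$ is vacuous after shrinking $c$, exactly as you describe.
\end{itemize}
If you fix $\tau\in(0,1)$ from the start, the separate large-$\tau$ discussion becomes unnecessary.
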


\begin{lem}\label{lem:Pk_highprob}
    Let $u, v \in \mathbb{R}^n$ be fixed vectors and let $P \in \mathbb{R}^{n \times d}$ be Haar distributed on the subspace orthogonal to $u_1$, i.e., $P$ has orthonormal columns chosen uniformly from $u_1^\perp$. Then
    \begin{equation}
        \mathbb{P} \Big[ \big|\inner{P^\top u, P^\top v} - \frac{d}{n-1} \inner{u_\perp, v_\perp} \big| \leq \tau \frac{d}{n-1} \norm{u_\perp}\norm{v_\perp} \Big] 
        \geq 1- 2\exp(-c\tau^2 d),
    \end{equation}
    where $u_\perp = (I - u_1u_1^\top)u$ and $v_\perp = (I - u_1u_1^\top)v$ are the components in $u_1^\perp$.
\end{lem}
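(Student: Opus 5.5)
The plan is to reduce Lemma \ref{lem:Pk_highprob} to the preceding Proposition (Vershynin, Lemma 5.3.2) by working inside the $(n-1)$-dimensional space $u_1^\perp$ and then applying polarization. Since the columns of $P$ lie in $u_1^\perp$, we have $P^\top u = P^\top u_\perp$ and $P^\top v = P^\top v_\perp$. Hence $\inner{P^\top u, P^\top v} = \inner{P^\top u_\perp, P^\top v_\perp}$. Identify $u_1^\perp$ isometrically with $\mathbb{R}^{n-1}$. Then $Q = PP^\top$ is the orthogonal projection onto a uniformly random $d$-dimensional subspace of $\mathbb{R}^{n-1}$, i.e.\ distributed as $G_{n-1,d}$. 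Moreover $\norm{Qz}^2 = \norm{P^\top z}^2$ for $z \in u_1^\perp$. The Proposition, applied in ambient dimension $n-1$, therefore gives: for any fixed $z \in u_1^\perp$,
\[
\left|\norm{P^\top z}^2 - \tfrac{d}{n-1}\norm{z}^2\right| \le \tau \tfrac{d}{n-1}\norm{z}^2
\]
with probability at least $1-2e^{-c\tau^2 d}$.

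Next, the inner product is handled by polarization. Both sides of the target inequality are bilinear in $(u_\perp, v_\perp)$ and scale with $\norm{u_\perp}\norm{v_\perp}$, and the claim is trivial if either vector vanishes. So we may assume $\norm{u_\perp}=\norm{v_\perp}=1$. Set $a = u_\perp$ and $b = v_\perp$. Then
\[
\inner{P^\top a,P^\top b} = \tfrac14\left(\norm{P^\top(a+b)}^2 - \norm{P^\top(a-b)}^2\right).
\]
Apply the norm concentration to the two fixed vectors $a\pm b$, and write $\varepsilon_\pm$ for the deviation of $\norm{P^\top(a\pm b)}^2$ from $\frac{d}{n-1}\norm{a\pm b}^2$. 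Then
\[
\left|\inner{P^\top a,P^\top b} - \tfrac{d}{n-1}\inner{a,b}\right| \le \tfrac14\left(|\varepsilon_+|+|\varepsilon_-|\right) \le \tfrac{\tau}{4}\tfrac{d}{n-1}\left(\norm{a+b}^2+\norm{a-b}^2\right) = \tau\tfrac{d}{n-1},
\]
using the parallelogram law $\norm{a+b}^2+\norm{a-b}^2 = 4$. Rescaling back gives the factor $\norm{u_\perp}\norm{v_\perp}$.

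The main obstacle is the probability bookkeeping. The argument uses two events, which by a union bound hold simultaneously with probability at least $1-4e^{-c\tau^2 d}$, not the stated $1-2e^{-c\tau^2 d}$. I would handle this by absorbing the factor into the universal constant, as is standard. Concretely, apply the Proposition with $c$ replaced by a smaller $c'$ so that $4e^{-c'\tau^2 d}\le 2e^{-c''\tau^2 d}$ for $d\tau^2 \gtrsim 1$, and note that the bound is vacuous otherwise. Alternatively, a direct route applies concentration to the single Lipschitz function $P\mapsto\inner{P^\top a,P^\top b}$ on the Stiefel manifold. A minor point to check is that "Haar on $u_1^\perp$" indeed induces the uniform law on $G_{n-1,d}$ after the isometric identification; this holds because the identification intertwines the orthogonal groups.
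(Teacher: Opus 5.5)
Your proposal is correct and follows the paper's own route. Both arguments identify $u_1^\perp \simeq \mathbb{R}^{n-1}$ so that $PP^\top$ is uniform on $G_{n-1,d}$, apply the Vershynin norm-concentration bound, and pass to inner products via polarization and the parallelogram law. Your bookkeeping is cleaner than the paper's: the paper normalizes $u$ rather than $u_\perp$, and it asserts the bound ``in the same event'' without the union bound over the two vectors $u_\perp \pm v_\perp$, which you correctly absorb into the constant $c$.
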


\begin{proof}
    Let $\Pi = I - u_1 u_1^\top$ denote the projection onto $u_1^\perp$. Then $P$ is Haar distributed on $\Pi$, i.e., its columns form an orthonormal basis of a uniformly random $d$-dimensional subspace of $\Pi$.  

    Define $u_\perp = \Pi u$ and $v_\perp = \Pi v$. Then
    \[
        \inner{P^\top u, P^\top v} = \inner{P^\top u_\perp, P^\top v_\perp}.
    \]

    Now, $P$ restricted to $u_1^\perp$ is equivalent to saying $PP^\top$ is a uniform random projection in $\mathbb{R}^{n-1}$. That is, if we identify $u_1^\perp \simeq \mathbb{R}^{n-1}$, then $P P^\top$ is uniform in $G_{n-1, d}$. Applying Lemma 5.3.2 in $\mathbb{R}^{n-1}$ and noting that $\norm{PP^\top u} = \norm{P^\top u}$, we find that
    \[
        \mathbb{E}[\norm{P^\top u_\perp}^2] = \frac{d}{n-1} \norm{u_\perp}^2, \quad
        \mathbb{E}[\norm{P^\top v_\perp}^2] = \frac{d}{n-1} \norm{v_\perp}^2,
    \]
    and the standard concentration inequality yields
    \[
        \norm{P^\top u_\perp}^2 \approx (1 \pm \tau) \frac{d}{n-1} \norm{u_\perp}^2 \quad \text{with probability at least } 1-2\exp(-c\tau^2 d),
    \]
    and similarly for $v_\perp$. Next, using the polarization identity
    \[
        \inner{P^\top u_\perp, P^\top v_\perp} = \frac{1}{4} \Big( \norm{P^\top (u_\perp + v_\perp)}^2 - \norm{P^\top (u_\perp - v_\perp)}^2 \Big),
    \]
    we obtain
    \begin{align*}
        \inner{P^\top \tilde u_\perp, P^\top \tilde v_\perp} 
            &\leq \frac{1}{4} \frac{d}{n-1}\Big( (1+\tau)\norm{\tilde u_\perp + \tilde v_\perp}^2 - (1-\tau)\norm{\tilde u_\perp - \tilde v_\perp}^2 \Big) \\
            &= \frac{1}{4} \frac{d}{n-1}\Big( \norm{\tilde u_\perp + \tilde v_\perp}^2 - \norm{\tilde u_\perp - \tilde v_\perp}^2 + \tau (\norm{\tilde u_\perp + \tilde v_\perp}^2 + \norm{\tilde u_\perp - \tilde v_\perp^2})\Big) \\
            &= \frac{d}{n-1} \inner{\tilde u_\perp, \tilde v_\perp} + \frac{\tau}{2} \frac{d}{n-1} (\norm{\tilde u_\perp}^2 + \norm{\tilde v_\perp}^2) \\
            &= \frac{d}{n-1} \inner{\tilde u_\perp, \tilde v_\perp} + \tau \frac{d}{n-1},
    \end{align*}
    where $\tilde u = u/\norm{u}$. Consequently, this implies that
    \begin{equation}
        \inner{P^\top u_\perp, P^\top v_\perp} 
        \leq \frac{d}{n-1} \left( \inner{ u_\perp, v_\perp} + \tau \norm{u_\perp}\norm{v_\perp} \right)
    \end{equation}
    in the same event. The lower bound is similarly obtained, completing the proof.
\end{proof}

\section{Proof for General Variant}\label{sec:proof_general}
In this section, we will provide the details to the proof for the results in the general algorithm, namely the iteration complexity in Theorem \ref{thm:conv_iter} and the time complexity in Theorem \ref{thm:cost_vanilla}. The main bulk of the argument is done in a consecutive conditioning manner, so we will first define the $\sigma-$algebras 
\begin{equation}\label{cond:sig_alg_1}
    \mathcal{F}_k = \sigma(P_0, P_1, \cdots, P_{k-1}).
\end{equation}

\subsection{Proof for Iteration Complexity (Theorem \ref{thm:conv_iter})}
To solve for Equation \eqref{eqn:conv_iter}, we will employ a 3-stage approach.
\begin{enumerate}
    \item Obtain a lower bound on the value of $\gamma_k$ given the previous $\gamma_{k-1}$.
    \item Find an upper bound on $k$ such that $\gamma_k > \delta$ is still satisfied.
    \item Sum the values of $\gamma_k$ from $1$ to $N$ and consequently find a lower bound on $N$ which sufficiently gives us $\mathbb{E} \left[ \norm{\nabla f(x_N)}^2 \right] < \epsilon^2$.
\end{enumerate}

\paragraph{Obtain a Recurrence Relation on $\gamma_k$.}
To find the recurrence relationship between $\gamma_k$ and $\gamma_{k-1}$, we first prove the following lemma that tells us how much the norm of the gradients changes after 1 iteration. WLOG, we will assume that the refresh step is at $x_0$, which is the initial point. Since the algorithm is sequential, we can always condition on the corresponding refresh steps for the iterations after it.

\begin{lem}\label{lem:grad_consec}
    Suppose $f$ is L-smooth, then in expectation, the squared norm of the gradient of $f$ at consecutive steps can be controlled by the inequality
    \begin{equation}
        \EE \left[\norm*{\nabla f(x_{k})}^2 \right] \leq \left( 1 + \alpha L \right)^2 \EE \left[\norm*{\nabla f(x_{k-1})}^2\right].
    \end{equation}
    
    \begin{proof}
        By the Lipschitz continuity of the gradient, we have from Equation \ref{eqn:l-smooth-1}
        \begin{equation*}
            \| \nabla f(x_{k}) \| - \| \nabla f(x_{k-1}) \| \leq L \|x_k - x_{k-1}\|.
        \end{equation*}
        Taking conditional expectation over $\mathcal{F}_{k-1}$ and substituting the update step \ref{eqn:update_step}, we can get
        \begin{equation}\label{eqn:grad_consec_1}
            \mathbb{E} \left[ \norm*{\nabla f(x_{k})} \mid \mathcal{F}_{k-1} \right] \leq \norm*{\nabla f(x_{k-1})} + \alpha L \mathbb{E} \left[ \norm*{P_{k-1} P_{k-1}^\top\nabla f(x_{k-1})} \mid \mathcal{F}_{k-1} \right].
        \end{equation}
        We now require a bound on the projected gradient. Observe that by the orthogonality of the columns of the matrix $P_{k-1}$, we have
        \begin{align*}
            \|P_{k-1}P_{k-1}^\top \nabla  f(x_{k-1})\|^2
                &= \nabla  f(x_{k-1})^\top P_{k-1}P_{k-1}^\top P_{k-1}P_{k-1}^\top \nabla  f(x_{k-1}) \\
                &=\nabla  f(x_{k-1})^\top P_{k-1}P_{k-1}^\top \nabla  f(x_{k-1}) \\
                &= \|P_{k-1}^\top \nabla f(x_{k-1})\|^2.
        \end{align*}
        Taking conditional expectation over $\mathcal{F}_{k-1}$, we have
        \begin{align}
            \EE[\|P_{k-1}P_{k-1}^\top \nabla  f(x_{k-1})\|^2 \mid \mathcal{F}_{k-1}]
                &= \left\langle \nabla f(x_{k-1}), \EE \left[P_{k-1}P_{k-1}^\top \right] \nabla f(x_{k-1}) \right\rangle \nonumber\\
                &= \left\langle \nabla f(x_{k-1}), \left( \left(1-\frac{d}{n-1}\right) \hat{v}_{k-1}\hat{v}_{k-1}^\top + \frac{d}{n-1}I_n \right) \nabla f(x_{k-1}) \right\rangle \nonumber\\
                &= \left(1-\frac{d}{n-1}\right)\left\langle \hat{v}_{k-1}, \nabla f(x_{k-1}) \right\rangle^2 + \frac{d}{n-1} \|\nabla f(x_{k-1})\|^2 \label{eqn:grad_consec_2}.
        \end{align}
        By Cauchy-Schwartz and Jensen's Inequality, we have
        \begin{align*}
            \EE \left[ \norm*{P_{k-1}P_{k-1}^\top \nabla  f(x_{k-1})} \mid \mathcal{F}_{k-1} \right]
                &\leq \left( \EE \left[ \norm*{P_{k-1}P_{k-1}^\top \nabla  f(x_{k-1})}^2 \mid \mathcal{F}_{k-1} \right] \right)^{1/2} \\
                &\leq \left(\|\nabla f(x_{k-1})\|^2\right)^{1/2} \\
                &= \| \nabla f(x_{k-1})\|.
        \end{align*}
        Substituting this back into \eqref{eqn:grad_consec_1}, we can get 
        \begin{align*}
            \EE[\| \nabla f(x_{k}) \|^2 \mid \mathcal{F}_{k-1}]
                &\leq \| \nabla f(x_{k-1}) \|^2 + 2\alpha L \| \nabla f(x_{k-1}) \| \EE\left[ \|P_{k-1} P_{k-1}^\top\nabla f(x_{k-1})\| \mid \mathcal{F}_{k-1} \right] \\&\qquad+ \alpha^2 L^2 \EE\left[\|P_{k-1} P_{k-1}^\top\nabla f(x_{k-1})\|^2 \mid \mathcal{F}_{k-1} \right] \\
                &\leq \left( 1 + \alpha L \right)^2 \|\nabla f(x_{k-1}) \|^2,
        \end{align*}
        and the inequality follows by taking expectation on both sides conditional on the $\sigma-$algebra of the refresh step and using the tower property of conditional expectation.
\end{proof}
\end{lem}
This gives us a control on how much the gradient norm changes in 1 update step. The change can be controlled arbitrarily small using the step size, as these changes are local. With this, we can obtain the recurrence relationship between consecutive $\gamma$'s as shown by the next lemma.
\begin{lem}\label{lem:consec_gamma}
    Suppose $\gamma_{i}$ satisfies $\EE[\langle \nabla f(x_{i}), \hat{v}_{i} \rangle^2] \geq \gamma_{i} \EE[\|\nabla f(x_{i})\|^2]$ for $i = k-1$, then with $v_k = v_{k-1}$, the same inequality is satisfied for $i=k$ with
    \begin{equation}
    \gamma_k = \frac{\left(1 - 2 \alpha L \sqrt{\left(1-\frac{d}{n-1}\right)} \right)\gamma_{k-1} - 2 \alpha L \sqrt{\frac{d}{n-1}}}{\left( 1 + \alpha L \right)^2}.
    \end{equation}

    Note that this value can be gauaranteed to be positive with control of the step size $\alpha$, which will be explained in the next section.
    
    \begin{proof}
        By rewriting $\gfk = \nabla f(x_{k-1}) + (\gfk - \nabla f(x_{k-1}))$, we have
        \begin{align*}
            \langle \hat{v}_{k}, \nabla f(x_{k}) \rangle^2
                &= \langle \hat{v}_{k-1}, \nabla f(x_{k-1}) \rangle^2 + 2\langle \hat{v}_{k-1}, \gfk - \nabla f(x_{k-1}) \rangle \langle \hat{v}_{k-1}, \nabla f(x_{k-1}) \rangle \\
                &\qquad + \langle \hat{v}_{k-1}, \gfk - \nabla f(x_{k-1}) \rangle^2 \\
                &\geq \langle \hat{v}_{k-1}, \nabla f(x_{k-1}) \rangle^2 + 2\langle \hat{v}_{k-1}, \gfk - \nabla f(x_{k-1}) \rangle \langle \hat{v}_{k-1}, \nabla f(x_{k-1}) \rangle \\
                &\geq \langle \hat{v}_{k-1}, \nabla f(x_{k-1}) \rangle^2 - 2 \|\gfk - \nabla f(x_{k-1})\| |\langle \hat{v}_{k-1}, \nabla f(x_{k-1}) \rangle|,
        \end{align*}
        where the last line is by Cauchy-Schwartz Inequality. Using the assumption that the function is $L$-smooth, we have
        \begin{equation}\label{eqn:original_rec_lbd}
            \langle \hat{v}_{k}, \nabla f(x_{k}) \rangle^2
                \geq \langle \hat{v}_{k-1}, \nabla f(x_{k-1}) \rangle^2 - 2 L\|x_k - x_{k-1}\| |\langle \hat{v}_{k-1}, \nabla f(x_{k-1}) \rangle|
        \end{equation}
        and
        \begin{equation*}
            \EE[\langle \hat{v}_{k}, \nabla f(x_{k}) \rangle^2 \mid \mathcal{F}_{k-1}]
                \geq \langle \hat{v}_{k-1}, \nabla f(x_{k-1}) \rangle^2 - 2L |\langle \hat{v}_{k-1}, \nabla f(x_{k-1}) \rangle| \times \EE[\|x_k - x_{k-1}\| \mid \mathcal{F}_{k-1}].
        \end{equation*}
        To upper-bound the last term, we first use Cauchy-Schwartz and then Equation \eqref{eqn:grad_consec_2} to get
        \begin{align*}
            \EE[\|x_k - x_{k-1}\| \mid \mathcal{F}_{k-1}]
                &= \alpha \EE\left[\|P_{k-1}P_{k-1}^\top \nabla f(x_{k-1})\| \mid \mathcal{F}_{k-1} \right] \\
                &\leq \alpha \EE\left[\|P_{k-1}P_{k-1}^\top \nabla f(x_{k-1})\|^2 \mid \mathcal{F}_{k-1} \right]^{1/2} \\
                &\leq \alpha \left( \left(1-\frac{d}{n-1}\right)\left\langle \hat{v}_{k-1}, \nabla f(x_{k-1}) \right\rangle^2 + \frac{d}{n-1} \|\nabla f(x_{k-1})\|^2\right)^{1/2}.
        \end{align*}
        Using the inequality $\sqrt{a+b} \leq \sqrt{a} + \sqrt{b}$, we have
        \begin{equation}\label{eqn:original_rec_ubd}
             \EE[\|x_k - x_{k-1}\| \mid \mathcal{F}_{k-1}] \leq \alpha \left[ \sqrt{\left(1-\frac{d}{n-1}\right)} |\left\langle \nabla \hat{v}_{k-1}, f(x_{k-1}) \right\rangle| + \sqrt{\frac{d}{n-1}} \|\nabla f(x_{k-1})\|\right].
        \end{equation}
        Substituting this back into the equation above, we have
        \begin{align*}
            \EE[\langle \hat{v}_{k}, \nabla f(x_{k}) \rangle^2 \mid \mathcal{F}_{k-1}]
                &\geq \langle \hat{v}_{k-1}, \nabla f(x_{k-1}) \rangle^2 - 2 \alpha L |\langle \hat{v}_{k-1}, \nabla f(x_{k-1}) \rangle| \times \\
                &\qquad \left[ \sqrt{\left(1-\frac{d}{n-1}\right)} |\left\langle \hat{v}_{k-1}, \nabla f(x_{k-1}) \right\rangle| + \sqrt{\frac{d}{n-1}} \|\nabla f(x_{k-1})\|\right]  \\
                &= \left(1 - 2 \alpha L \sqrt{\left(1-\frac{d}{n-1}\right)} \right)\langle \hat{v}_{k-1}, \nabla f(x_{k-1}) \rangle^2 \\
                &\qquad - 2 \alpha L \left[\sqrt{\frac{d}{n-1}} \|\nabla f(x_{k-1})\|\right] |\langle \hat{v}_{k-1}, \nabla f(x_{k-1}) \rangle| \\
                &\geq \left(1 - 2 \alpha L \sqrt{\left(1-\frac{d}{n-1}\right)} \right)\langle \hat{v}_{k-1}, \nabla f(x_{k-1}) \rangle^2 - 2 \alpha L \sqrt{\frac{d}{n-1}} \|\nabla f(x_{k-1})\|^2.
        \end{align*}
        Taking expectation and using the induction hypothesis on $\langle \hat{v}_{k-1}, \nabla f(x_{k-1}) \rangle^2$, we have
        \begin{equation*}
            \EE[\langle \hat{v}_{k}, \nabla f(x_{k}) \rangle^2]
                \geq \left(\left(1 - 2 \alpha L \sqrt{\left(1-\frac{d}{n-1}\right)} \right)\gamma_{k-1} - 2 \alpha L \sqrt{\frac{d}{n-1}} \right) \EE[ \| \nabla f(x_{k-1}) \|^2 ].
        \end{equation*}
        Finally, we use Lemma \eqref{lem:grad_consec} to translate the gradient norm on the RHS back to $\|\nabla f(x_k)\|$ to get
        \begin{equation}
            \gamma_k = \frac{\left(1 - 2 \alpha L \sqrt{\left(1-\frac{d}{n-1}\right)} \right)\gamma_{k-1} - 2 \alpha L \sqrt{\frac{d}{n-1}}}{\left( 1 + \alpha L \right)^2}.
        \end{equation}
    \end{proof}
\end{lem}

\paragraph{Finding an Upper Bound for Number of Reuse.}
As a direct consequence, we can find the number of steps $r$ in which we the same alignment vector $v_0$ still retains $\delta$ amount of correlation with the gradient. i.e., $r = \max\{k \in \mathbb{N} \mid \gamma_k > \delta \}$, where $\gamma_k = \mathbb{E}[\langle \hat{v}_k, \nabla f(x_k) \rangle^2]/\mathbb{E}[\|\nabla f(x_k)\|^2]$. 
\begin{cor}\label{cor:ref_bound}
    Suppose $\gamma_0$ is the initial alignment parameter. Then, the number of steps $r$ in which we can reuse the same alignment vector $v_0$ is upper bounded by
    \begin{equation}
        r \leq \frac{\log\left( \frac{\gamma_0}{\delta + \sqrt{\frac{d}{n-1}}} \right)}{\log\left( \frac{(1+\alpha L)^2}{1-2\alpha L} \right)},
    \end{equation}
    where $\alpha < 1/2L$.
    \begin{proof}
        From Lemma \eqref{lem:consec_gamma}, we have the recurrence relation
        \begin{equation*}
            \gamma_k = \frac{\left(1 - 2 \alpha L \sqrt{\left(1-\frac{d}{n-1}\right)} \right)\gamma_{k-1} - 2 \alpha L \sqrt{\frac{d}{n-1}}}{\left( 1 + \alpha L \right)^2}.
        \end{equation*}
        Solving the recurrence by induction, we obtain
        \begin{align*}
            \gamma_k 
                &=\left(\frac{1-2\alpha L \sqrt{1-\frac{d}{n-1}}}{(1+\alpha L)^2}\right)^k \gamma_0 - \frac{2\alpha L \sqrt{\frac{d}{n-1}}-2\alpha L \sqrt{\frac{d}{n-1}}\left(\frac{1-2\alpha L \sqrt{1-\frac{d}{n-1}}}{(1+\alpha L)^2}\right)^k}{(1+\alpha L)^2 - \left(1 - 2\alpha L \sqrt{1-\frac{d}{n-1}}\right)} \\
                &\geq \left(\frac{1-2\alpha L \sqrt{1-\frac{d}{n-1}}}{(1+\alpha L)^2}\right)^k \gamma_0 - \frac{2\alpha L \sqrt{\frac{d}{n-1}}}{(1+\alpha L)^2 - \left(1 - 2\alpha L \sqrt{1-\frac{d}{n-1}}\right)} \\
                &\geq \left(\frac{1-2\alpha L \sqrt{1-\frac{d}{n-1}}}{(1+\alpha L)^2}\right)^k \gamma_0 - \frac{2}{2 + \alpha L}\sqrt{\frac{d}{n-1}} \\
                &\geq \left(\frac{1-2\alpha L}{(1+\alpha L)^2}\right)^k \gamma_0 - \sqrt{\frac{d}{n-1}}.
        \end{align*}
        Setting this to be at least $\delta$, we have
        \begin{equation*}
            \left(\frac{1-2\alpha L}{(1+\alpha L)^2}\right)^r \geq \frac{\delta + \sqrt{\frac{d}{n-1}}}{\gamma_0}.
        \end{equation*}
        Taking logarithm on both sides, we have
        \begin{equation}
            r \leq \frac{\log\left( \frac{\gamma_0}{\delta + \sqrt{\frac{d}{n-1}}} \right)}{\log\left( \frac{(1+\alpha L)^2}{1-2\alpha L} \right)}.
        \end{equation}
    \end{proof}
\end{cor}

\begin{rem}
    For the denominator of the form above, using $\log(1+x) \leq x$ for $x > -1$ and $-\log(1-y) \leq \frac{y}{1-y}$ for $y \in [0, 1)$, we have 
    \[
        \log\left( \frac{(1+\alpha L)^2}{1 - 2\alpha L} \right) = 2 \log(1+\alpha L) - \log(1-2\alpha L) \leq 2\alpha L + \frac{2\alpha L}{1-2\alpha L} = \frac{4 \alpha L (1-\alpha L)}{1 - 2\alpha L}.
    \]
    So, $r \leq \frac{1-2\alpha L}{4\alpha L (1-\alpha L)} \log \left( \frac{\gamma_0}{\delta + \sqrt{\frac{d}{n-1}}} \right)$ is sufficient.
\end{rem}

\paragraph{Summing $\gamma_k$ and Combining Everything.}
Now, we can finally prove the main iteration complexity result in Theorem \ref{thm:conv_iter}.
\begin{proof}
    By $L$-smoothness of $f$, we have from Equation \eqref{eqn:lip_ineq} that
    \begin{align*}
        f(x_k) - f(x_{k-1}) \leq -\alpha \|P_{k-1}^\top \nabla f(x_{k-1}) \|^2 + \frac{\alpha^2 L}{2} \|P_{k-1} P_{k-1}^\top \nabla f(x_{k-1}) \|^2,
    \end{align*}
    where we used the update step as defined in \eqref{eqn:update_step}. Taking conditional expectation using the $\sigma$-algebras defined previously in \eqref{cond:sig_alg_1}, we have
    \begin{align*}
        \EE[f(x_{k-1}) - f(x_{k}) \mid \mathcal{F}_{k-1}]
            &\geq \alpha \EE\left[\norm{P_{k-1}^\top \nabla f(x_{k-1})}^2 \mid \mathcal{F}_{k-1}\right] - \frac{\alpha^2 L}{2} \EE\left[\norm{P_{k-1} P_{k-1}^\top \nabla f(x_{k-1})}^2 \mid \mathcal{F}_{k-1}\right] \\
            &= \alpha\brac{1 - \frac{\alpha L}{2}} \EE\left[\|P_{k-1}^\top \nabla f(x_{k-1}) \|^2 \mid \mathcal{F}_{k-1}\right] \\
            &= \alpha \brac{1 - \frac{\alpha L}{2}} \sqbrac{ \brac{1 - \frac{d}{n-1}}\EE \left[ \langle \hat{v}_{k-1}, \nabla f(x_{k-1}) \rangle^2 \mid \mathcal{F}_{k-1} \right] + \frac{d}{n-1}\|\nabla f(x_{k-1})\|^2}.
    \end{align*}
    Assuming we choose an $\alpha < \frac{1}{2L}$, we have $\alpha \left( 1 - \frac{\alpha L}{2} \right) > 0$. Next, taking expectation and with Equation \eqref{eqn:alignment_parameter}, we can get
    \begin{equation*}
        \EE[f(x_{k-1}) - f(x_{k})] \geq \alpha \left( 1 - \frac{\alpha L}{2} \right)\left(\brac{1 - \frac{d}{n-1}}\gamma_{k-1} + \frac{d}{n-1}\right)\EE\left[\|\nabla f(x_{k-1})\|^2\right].
    \end{equation*}
    Taking sum over the $N$ steps, we get
    \begin{equation}\label{eqn:for_PL}
        \alpha \left( 1 - \frac{\alpha L}{2} \right) \sum_{i=1}^{N} \left(\brac{1-\frac{d}{n-1}}\gamma_{i-1} + \frac{d}{n-1}\right)\EE\left[\|\nabla f(x_{i-1})\|^2\right] \leq \EE[f(x_{0}) - f(x_{N-1})] \leq f(x_{0}) - f^\ast,
    \end{equation}
    since $f^\ast \leq f(x)$. Rewriting the inequality, we have
    \begin{equation}
        \min_i \mathbb{E} \left[ \norm*{\nabla f(x_i)}^2 \right] \leq \frac{\alpha^{-1} \left( 1 - \frac{\alpha L}{2} \right)^{-1} (f(x_{0}) - f^\ast)}{\sum_{i=1}^{N} \left(\left(1 - \frac{d}{n-1} \right)\gamma_{i-1} + \frac{d}{n-1}\right)}.
    \end{equation}
    Now, all that is left is to simplify the LHS term. From Lemma \eqref{lem:consec_gamma}, we have
    \begin{align*}
        \gamma_i &= \frac{\left(1 - 2 \alpha L \sqrt{1-\frac{d}{n-1}} \right)\gamma_{i-1} - 2 \alpha L \sqrt{\frac{d}{n-1}}}{\left( 1 + \alpha L \right)^2} \\
        &= a\gamma_{i-1} - b \\
        &= a^i \gamma_0 - b\left(\frac{1-a^i}{1-a} \right),
    \end{align*}
    where
    \begin{align*}
        a &= \frac{1 - 2 \alpha L \sqrt{1-\frac{d}{n-1}}}{\left( 1 + \alpha L \right)^2} \\
        b &= \frac{2 \alpha L \sqrt{\frac{d}{n-1}}}{\left( 1 + \alpha L \right)^2}.
    \end{align*}
    Suppose at the $k$\textsuperscript{th} refresh, the initial alignment is $\gamma_0^{(k)}$ and the alignment threshold is $\delta_k$. Denote the number of steps before the next refresh as $r_k$, i.e. $r_k < \frac{1-2\alpha L}{4\alpha L (1-\alpha L)} \log \left( \frac{\gamma_0^{(k)}}{\delta_k + \sqrt{\frac{d}{n-1}}} \right)$ by Corollary \eqref{cor:ref_bound}. For simplicity, assume we have $\kappa$ number of refreshes, i.e. $N = \sum_{i=1}^\kappa r_i$. Then, the sum over $\gamma_i$ can be simplified to be
    \begin{align*}
        \sum_{i=0}^{N-1} \gamma_i
            &= \sum_{k=1}^\kappa \sum_{i=0}^{r_{k} - 1} \gamma_i^{(k)} \\
            &= \sum_{k=1}^\kappa \sum_{i=0}^{r_{k}-1} \left( a^i \gamma_0^{(k)} - b\left(\frac{1-a^i}{1-a} \right) \right)\\
            &= \sum_{k=1}^\kappa \left( \gamma_0^{(k)}\sum_{i=0}^{r_{k}-1} a^i - b \sum_{i=1}^{r_{k}-1} \left(\frac{1-a^i}{1-a} \right) \right) \\
            &= \sum_{k=1}^\kappa \left[\gamma_0^{(k)} \frac{1-a^{r_{k}}}{1-a} - \frac{b}{1-a} \left( r_{k}-1 - \frac{a(1-a^{r_{k}-1})}{1-a} \right)\right] \\
            &= \frac{1}{1-a} \sum_{k=1}^\kappa \left[\gamma_0^{(k)} (1-a^{r_{k}}) - b \left( r_{k}-1 - \frac{a}{1-a} + \frac{a^{r_{k}}}{1-a} \right)\right].
    \end{align*}
    Since $b > 0$ and $\gamma_0^{(k)} > 0$, we have $\gamma_0^{(k)} (1-a^{r_{k}}) > \gamma_0^{(k)} (1-a)$ and $-a^{r_{k}}b > -ab$, giving us
    \begin{align*}
        \sum_{i=0}^{N-1} \gamma_i
            &\geq \frac{1}{1-a} \sum_{k=1}^\kappa \left[\gamma_0^{(k)} (1-a) - b \left( r_{k}-1 - \frac{a}{1-a} + \frac{a}{1-a} \right)\right]  \\
            &= \sum_{k=1}^\kappa \left[\gamma_0^{(k)} - b \left( \frac{r_{k}-1}{1-a} \right)\right]  \\
            &= \sum_{k=1}^\kappa \gamma_0^{(k)} - \frac{b}{1-a} \left( \sum_{k=1}^\kappa (r_{k}-1) \right)  \\
            &= \sum_{k=1}^\kappa \gamma_0^{(k)} - \frac{b}{1-a} (N - \kappa).
    \end{align*}
    For the 2nd term, we can simplify it to be
    \begin{align*}
        \frac{b}{1-a} 
            &=  \left(\frac{2\alpha L \sqrt{\frac{d}{n-1}}}{(1+\alpha L)^2} \right) \left( 1 - \frac{1-2\alpha L \sqrt{1-\frac{d}{n-1}}}{(1+\alpha L)^2} \right)^{-1} \\
            &=  \frac{2\alpha L}{1 + 2\alpha L + \alpha^2 L^2 - \left(1-2\alpha L \sqrt{1-\frac{d}{n-1}}\right)} \sqrt{\frac{d}{n-1}} \\
            &=  \frac{1}{\frac{\alpha L}{2} + \left(1+\sqrt{1-\frac{d}{n-1}}\right) } \sqrt{\frac{d}{n-1}} \\
            &= t_0 \sqrt{\frac{d}{n-1}},
    \end{align*}
    where
    \begin{equation}
        t_0 = \frac{1}{\frac{\alpha L}{2} + \left(1+\sqrt{1-\frac{d}{n-1}}\right) } \in \left(\frac{4}{9}, \frac{1}{2} + O \left(\frac{d}{n} \right)\right).
    \end{equation}
    Consequently, we have
    \begin{align*}
        \sum_{i=0}^{N-1} \left( \brac{1-\frac{d}{n-1}}\gamma_i + \frac{d}{n-1} \right) 
        &\geq \brac{1-\frac{d}{n-1}} \sum_{k=1}^\kappa \gamma_0^{(k)} - t_0 \brac{1-\frac{d}{n-1}}\sqrt{\frac{d}{n-1}} (N - \kappa) + N\frac{d}{n-1}.
    \end{align*}
    To find $\kappa$, we can sum the $r_i's$ to get
    \begin{equation*}
        N = \sum_{i=i}^\kappa r_i \leq \frac{1-2\alpha L}{4\alpha L (1-\alpha L)} \sum_{i=1}^\kappa \log \left( \frac{\gamma_0^{(i)}}{\delta_i + \sqrt{\frac{d}{n-1}}} \right).
    \end{equation*}
    Equivalently, we have
    \begin{equation*}
        \frac{4\alpha L (1 -\alpha L)}{1 -2 \alpha L} N \leq \log\left( \prod_{i=1}^\kappa \frac{\gamma_0^{(i)}}{\delta_i + \sqrt{\frac{d}{n-1}}}\right) = \sum_{i=1}^\kappa \log \left( \frac{\gamma_0^{(i)}}{\delta_i + \sqrt{\frac{d}{n-1}}} \right).
    \end{equation*}
    Since we have $\delta_k = \delta$ and $\gamma^{(k)}_0 = \gamma_0$, we have
    \begin{equation*}
        \kappa \geq \frac{4\alpha L (1-\alpha L) N}{(1 - 2\alpha L)\log(\gamma_0/\tilde \delta)},
    \end{equation*}
    where $\tilde \delta = \delta + \sqrt{\frac{d}{n-1}}$. For $\alpha$ satisfying
    \[
        \alpha \geq \frac{1}{L} \cdot \frac{K}{2 + K + \sqrt{4 + K^2}} , \quad K = \log\left(\frac{\gamma_0}{\tilde\delta}\right)\cdot
    \frac{t_0\left(1-\frac{d}{n-1}\right)\sqrt{\frac{d}{n-1}}-\frac{d}{n-1}}{\left(1-\frac{d}{n-1}\right)\left(\gamma_0+t_0\sqrt{\frac{d}{n-1}}\right)},
    \]
    we have 
    \[
        \frac{d}{n-1} + \frac{4\alpha L (1-\alpha L)}{(1-2\alpha L)\log(\gamma_0/\tilde \delta)} \brac{1-\frac{d}{n-1}} \gamma_0 - \tilde t_0 > 0,
    \]
    where $\tilde t_0 = t_0 \left(1 - \frac{d}{n-1} \right) \sqrt{\frac{d}{n-1}}\left(1 - \frac{4\alpha L(1-\alpha L)}{(1-2\alpha L)\log(\gamma_0/\tilde \delta)}\right)$. Substituting this back into the equation above, we have
    \begin{equation}
        N \min_{k \in [N]} \mathbb{E} \left[ \norm*{\nabla f(x_k)}^2 \right] \leq \frac{\alpha^{-1} \left(1- \frac{\alpha L}{2}\right)^{-1} (f(x_0) - f^\ast)}{\frac{d}{n-1} + \frac{4\alpha L (1-\alpha L)}{(1-2\alpha L)\log(\gamma_0/\tilde \delta)} \brac{1-\frac{d}{n-1}} \gamma_0 - \tilde t_0},
    \end{equation}
     It then suffices to have
    \begin{equation}
        N \geq \frac{\alpha^{-1} \left(1- \frac{\alpha L}{2}\right)^{-1}(f(x_0) - f^\ast)}{\frac{4\alpha L (1-\alpha L)}{(1-2\alpha L)\log(\gamma_0/\tilde \delta)} \left(1 - \frac{d}{n-1}\right) \gamma_0 + \frac{d}{n-1} - \tilde t_0}\epsilon^{-2}
    \end{equation}
    to obtain $\min_{k \in [N]} \mathbb{E} \left[ \norm*{\nabla f(x_k)}^2 \right] < \epsilon^2$.
\end{proof}

\subsection{Proof for Computational Cost (Theorem \ref{thm:cost_vanilla})}
To compute the total cost of the algorithm, we let $r$ be the number of steps before we refresh the correlation vector $v_k = \gfk$. For each step where $v_k = v_{k-1}$, the computational cost is $O(nd + d\xi)$, which is the cost of computing $d$ directional derivatives and the cost of the matrix-vector multiplication. For $v_k = \gfk$, the cost is $O(nd + \nu)$ instead (the matrix-vector multiplication still costs the same). 

The total computational cost can then be computed as
    \begin{align*}
        &\underbrace{N \times (nd + d\xi)}_{v_k = v_{k-1}} + \underbrace{\frac{N}{r} \times (nd + \nu)}_{v_k = \gfk} \nonumber\\
            &= \frac{\alpha^{-1} \left(1- \frac{\alpha L}{2}\right)^{-1}\epsilon^{-2}}{\left(1 - \frac{d}{n-1}\right)\gamma_0/r + \frac{d}{n-1} - t_0\left(1 - \frac{d}{n-1}\right)\sqrt{\frac{d}{n-1}} \left(1 - 1/r\right)}(f(x_0) - f^\ast) \left[\left( d\xi + nd \right) + \frac{1}{r} \left( nd + \nu \right)\right] \\
            &= \frac{\alpha^{-1} \left(1- \frac{\alpha L}{2}\right)^{-1}\epsilon^{-2}}{\left(1 - \frac{d}{n-1}\right)\gamma_0 + r\frac{d}{n-1} - t_0\left(1 - \frac{d}{n-1}\right)\sqrt{\frac{d}{n-1}} \left(r - 1\right)}(f(x_0) - f^\ast) \left[r\left( d\xi + nd \right) + \left( nd + \nu \right)\right] \\
            &= O\left(\epsilon^{-2} \gamma_0\left[rd\left(n + \xi \right) + \nu \right]\right) \\
            &= O\left(\epsilon^{-2} \gamma_0^{-1}\left[d \log \left( \frac{\gamma_0}{\delta} \right) \left(\xi + n \right) + \nu \right]\right).
    \end{align*}
\section{Proof for Sparse Variant}
This section wil first show derivations for the computational complexity of our proposed algorithm in the sparse setting, followed by the analysis of the classical SSD algorithm in the same setting.

\subsection{Proof for Refresh Cost in Sparse Case (Proposition \ref{prop:sparse_refresh_cost})}
In this section, we will analyse the computational cost of the IHT algorithm applied to our problem. Recall that the algorithm is
\[
    y_{t+1} = H_s\left[y_t + \mu \Psi^\top (z - \Psi y_t)\right],
\]
where $z = \Psi \nabla f(x)$, $\Psi \in \mathbb{R}^{k' \times n}$, $y_0 \in \mathbb{R}^n$ and $H_s$ is the hard thresholding function. The initial cost to compute $z$ is $O(k' \xi)$, while each iteration is simply a matrix vector multiplication costing $O(k'n)$. The function $H_s$ at most requires $O(n\log(n))$ with sorting (which could be improved to $O(n)$ with partitioning, but the cost is dominated by $O(k'n)$ either way as $k' \gtrsim s\log(n/s)$). This means that 1 iteration of IHT has a complexity of $O(k'n + n\log(n)) = O(k'\xi)$. Consequently, $T$ iterations has a complexity of $O(Tk'n)$.

For a fixed initial alignment $\gamma_0$, $\gamma_0 = (1-\rho^2) \times p$ as shown in Equation \eqref{eqn:sparse_gamma_0}, where $\mathbb{P}[\mathcal{A}] \geq p$ for an appropriately defined event $\mathcal{A}$. By Corollary 1 of \cite{davies2009iht}, $\mathcal{A} = \{\delta_{3s} \leq \frac{1}{15}\}$, and with Gaussian matrices $\Psi \in \mathbb{R}^{k' \times n}$ where $k' \gtrsim n \log(n/s)$ and $\Psi_{ij} \sim N(0, 1/k')$, we have by \cite{baraniuk2008ripgauss} that $\mathbb{P}[\mathcal{A}] \geq 1 - Ce^{-ck'}$, where $C, c>0$ constants. So, we have
\[
    \rho(\gamma_0) = \sqrt{1 - \frac{\gamma_0}{1 - Ce^{-ck'}}} = \sqrt{\frac{1 - Ce^{-ck'} - \gamma_0}{1 - Ce^{-ck'}}},
\]
coupled with $T = \lceil \log(1/p)/\log(2) \rceil$ allows us to simplify the cost to
\[
    \nu(\rho(\gamma_0)) + O(k'\xi) = O \left( \log\left(\frac{1 - Ce^{-ck'}}{1 - Ce^{-ck'} - \gamma_0}\right) k' n + k'\xi \right),
\]
Note that when $Ce^{-ck'} \ll 1$, we have the logarithm term to be approximately $-\log(1-\gamma_0)$, which will not be too large whenever $\gamma_0$ is not too close to $1$.

\subsection{Proof for Computational Cost in Sparse Case (Theorem \ref{thm:cost_sparse})}

To compute the computational cost of the algorithm in the case where the function is linearly sparse, we do similar calculations as in the proof for Theorem \ref{thm:cost_vanilla}.
    Let $r$ be the number of steps before $v_k = y_T$, where $y_T$ is the estimation of $\nabla f(x_k)$ obtained from $T$ iterations of IHT. For each step where $v_k = v_{k-1}$, the computational cost is $O(nd + d\xi)$. In the update step, the computational cost is $O \left( v(\gamma_0) k' n + k'\xi + nd + d\xi \right)$, where $v(\gamma_0) =  \log\left(\frac{1 - Ce^{-ck'}}{1 - Ce^{-ck'} - \gamma_0}\right)$. The total computational cost will be given as
    \begin{align*}
        &\underbrace{\left( N - \frac{N}{r} \right) \times (nd + d\xi)}_{v_k = v_{k-1}} + \underbrace{\frac{N}{r} \times \left( v(\gamma_0) k' n + k'\xi + nd + d\xi \right)}_{v_k = \gfk} \\
            &= N \left(nd + d\xi + \frac{v(\gamma_0)k'n + k'\xi}{r}\right) \\
            &= \frac{\alpha^{-1} \left(1- \frac{\alpha L}{2}\right)^{-1}\epsilon^{-2}(f(x_0) - f^\ast)}{\left(1 - \frac{d}{n-1}\right)\gamma_0 + r\frac{d}{n-1} - t_0\left(1 - \frac{d}{n-1}\right)\sqrt{\frac{d}{n-1}} \left(r - 1\right)} \left( r(nd + d\xi) + v(\gamma_0)k'n + k'\xi\right).
    \end{align*}
    Suppose we assume $\gamma_0/r \gg d/(n-1)$, then we have
    \begin{align*}
        \mbox{Cost } 
            &\approx \frac{\alpha^{-1} (1-\alpha L /2)^{-1} \epsilon^{-2}}{\gamma_0 + (1-t_0)rd/(n-1)}(f(x_0) - f^\ast) \left[ r( nd + d\xi ) + v(\gamma_0) k'n + k'\xi\right] \\
            &= O \left( \epsilon^{-2} \gamma_0^{-1} \left[ r (nd + d\xi) + v(\gamma_0) k'n + k'\xi \right] \right) \\
            &= O\left( \epsilon^{-2} \gamma_0^{-1} \left[ (rd + v(\gamma_0) k')n + (rd + k')\xi \right] \right) \\
            &= O\left( \epsilon^{-2} \gamma_0^{-1} \left[ (d\log(\gamma_0/\delta) + v(\gamma_0) k') \times n + (d\log(\gamma_0/\delta) + k') \times \xi \right] \right),
    \end{align*}
    where $r = O(\log(\gamma_0/\delta))$ as in Corollary \eqref{cor:ref_bound}. Substituting $v(\gamma_0)$ back in and assuming $Ce^{-ck'} \ll 1$, we get
    \begin{equation}
        O\left( \frac{\epsilon^{-2}}{\gamma_0} \left[ \left( d\log(\gamma_0/\delta) + k' \log \left(\frac{1}{1-\gamma_0}\right) \right) \times n + \left( d\log(\gamma_0/\delta) + k' \right) \times \xi \right] \right),
    \end{equation}
    where $k' \gtrsim s\log(n/s)$.

\subsection{Analysis of Classical SSD in Sparse Setting (Theorem \ref{thm:sparse_classic})}

\begin{thm}\label{thm:conv2-explicit-simple}
Assume that \(f\) has intrinsic dimension \(s<n\), so that
\[
f(x)=g(Rx),\qquad \Pi=R^\top R,
\]
where \(R\in\mathbb R^{s\times n}\) has orthonormal rows. Let
\[
x_{k}=x_k-\alpha P_{k-1}P_{k-1}^\top \nabla f(x_{k-1}),
\]
where \(P_{k-1}\) are i.i.d. random matrices uniformly distributed on
\(\mathrm{St}(n,d)\), and define \(y_k:=Rx_k\). Assume that \(g\) is
\(L\)-smooth. Suppose that
\begin{equation}\label{eq:size-cond-final}
    \max\!\left\{1,2\log\!\left(\frac{2n^2}{9s}\right)\right\}\le d\le \frac{s}{16}, \qquad \alpha=\frac{n}{18sL},
\end{equation}
then
\begin{equation}\label{eq:main-bound-final}
    \min_{1 \le k\le N}\mathbb E\!\left[\|\nabla f(x_{k-1})\|^2\right]
        \le
    \frac{36Ls}{Nd}\,\bigl(f(x_0)-f^\ast\bigr).
\end{equation}
Consequently, if
\[
    N\ge \frac{36Ls}{d\,\epsilon^2}\bigl(f(x_0)-f^\ast\bigr),
\]
then
\[
    \min_{1\le k\le N}\mathbb E\!\left[\|\nabla f(x_{k-1})\|^2\right] \le \epsilon^2.
\]
\end{thm}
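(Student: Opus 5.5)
Since $\nabla f(x)=R^\top\nabla g(Rx)$, the whole argument lives in $\mathbb R^s$. With $y_k=Rx_k$ and $h_k:=\nabla g(y_k)$, the update reads $y_{k}=y_{k-1}-\alpha RP_{k-1}P_{k-1}^\top R^\top h_{k-1}$, and $\|\nabla f(x_k)\|=\|h_k\|$ because $R$ has orthonormal rows. Write $Q_{k-1}:=R P_{k-1}P_{k-1}^\top R^\top\in\mathbb R^{s\times s}$, the compression of a Haar rank-$d$ projection to the $s$-dimensional row space. The plan is the usual descent-lemma argument for $g$, using the $L$-smoothness of $g$:
\[
g(y_k)\le g(y_{k-1})-\alpha\, h_{k-1}^\top Q_{k-1}h_{k-1}+\frac{\alpha^2L}{2}\|Q_{k-1}h_{k-1}\|^2 .
\]
The gain from sparsity comes from the second-order term. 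Its naive bound $\|Q h\|^2\le h^\top Q h$ only permits $\alpha\sim 1/L$, which brings back the $n/d$ factor. The true size of $Q$ is $\|Q\|_{\op}\approx \max(d,s)/n$, so a step size $\alpha\sim n/(sL)$ should be admissible.

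\textbf{Step 1 (first moment).} Condition on $\mathcal F_{k-1}$. By invariance, $\EE[P P^\top]=\frac dn I_n$, so $\EE[h^\top Q h]=\frac dn\|h\|^2$.

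\textbf{Step 2 (second-order term).} I need $\EE\|Qh\|^2\lesssim \frac dn\cdot\frac sn\|h\|^2$, up to a constant such as $2$. One route is an exact Haar second-moment computation: $\EE[(PP^\top)A(PP^\top)]$ with $A=R^\top R$ a rank-$s$ projection has the known closed form $c_1 A+c_2\,\mathrm{tr}(A) I+\dots$, from which $\EE\|R PP^\top R^\top h\|^2 = \frac{d}{n}\bigl(\frac{d+1}{n+2}+\frac{s}{n}\cdots\bigr)\|h\|^2$ follows. The other route, which matches the stated conditions, is high probability: $RP\in\mathbb R^{s\times d}$ behaves like a scaled Gaussian matrix, so $\|RP\|_{\op}^2\le \frac{(\sqrt s+\sqrt d)^2}{n}(1+o(1))\le \frac{2s}{n}$ with failure probability $e^{-cd}$. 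The condition $d\le s/16$ makes $(\sqrt s+\sqrt d)^2\le \frac{25}{16}s$, and $d\ge 2\log(2n^2/9s)$ makes the failure probability at most $\frac{9s}{2n^2}$. On the bad event I use the trivial bound $\|Qh\|\le\|h\|$, so its contribution is $\alpha^2L\cdot\frac{9s}{2n^2}\|h\|^2$. With $\alpha=n/(18sL)$ this is $\frac{\alpha d}{n}\cdot O(1/d)$, which is absorbed. Combining, $\EE\|Qh\|^2\le \|RP\|^2_{\op}\,\EE[h^\top Q h]+\text{bad}\le \frac{2s}{n}\frac dn\|h\|^2+\frac{9s}{2n^2}\|h\|^2$.

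\textbf{Step 3 (assemble).} Plugging in gives $\EE[g(y_{k-1})-g(y_k)\mid\mathcal F_{k-1}]\ge \alpha\frac dn\bigl(1-\frac{\alpha L s}{n}-\dots\bigr)\|h_{k-1}\|^2$. With $\alpha=\frac{n}{18sL}$ the bracket is at least $\tfrac12$, so the decrease is at least $\frac{d}{36Ls}\|h_{k-1}\|^2$. Taking total expectation, telescoping, and using $g\ge f^\ast$ then gives \eqref{eq:main-bound-final}.

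The main obstacle is Step 2. The operator-norm concentration for the compressed Haar projection has to be obtained with explicit constants that match $18$ and $36$, and the high-probability bound, which is not independent of $h$'s alignment, has to be combined cleanly with the expectation (via $\EE[XY]\le\|X\|_\infty$ on the good event). If the constants fail to match, I would fall back on the exact Weingarten second-moment formula, which yields $\EE\|Qh\|^2\le \frac dn\bigl(\frac{d+s+1}{n}\bigr)\|h\|^2$ without any probability bookkeeping.
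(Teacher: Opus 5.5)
Your high-probability route is essentially the paper's proof. You reduce to $g$, use $\EE[h^\top Qh]=\frac dn\|h\|^2$, bound $\|Qh\|^2\le\|RP\|_{\op}^2\,h^\top Qh$ on a good event (which does not depend on $h$, and $h^\top Qh\ge 0$ lets you drop the indicator), and use $0\preceq Q\preceq I_s$ off it. The one correction is that your claim $\|RP\|_{\op}^2\le 2s/n$ at failure level $\frac{9s}{2n^2}$ is not justified by the hypotheses with explicit constants. Instead, write $P=G(G^\top G)^{-1/2}$ and take deviation $t=\sqrt d$ in $\sigma_{\max}(RG)\le\sqrt s+\sqrt d+t$ and $\sigma_{\min}(G)\ge\sqrt n-\sqrt d-t$. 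This gives $\|RP\|_{\op}^2\le\frac{(\sqrt s+2\sqrt d)^2}{(\sqrt n-2\sqrt d)^2}\le\frac{9s}{n}$ with failure probability $2e^{-d/2}\le\frac{9s}{n^2}$; your bracket then becomes $1-\frac14\bigl(1+\frac1d\bigr)\ge\frac12$, and the constant $36$ comes out exactly.

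Your Weingarten fallback is also correct and in fact cleaner. One gets $\EE\|Qh\|^2=\frac dn\bigl[\frac{d+2}{n+2}+\frac{(n-d)(s-1)}{(n+2)(n-1)}\bigr]\|h\|^2\le\frac{d(d+s+1)}{n^2}\|h\|^2$, and with $d+s+1\le\frac98 s$ this yields a per-step decrease of $\frac{31}{32}\cdot\frac{d}{18sL}\ge\frac{d}{36sL}$. That route does not need the lower bound $d\ge 2\log(2n^2/9s)$ at all.
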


\begin{proof}
Since \(f(x)=g(Rx)\), the chain rule gives
\begin{equation}\label{eq:grad-relation-final}
    \nabla f(x)=R^\top \nabla g(Rx).
\end{equation}
Hence
\[
    \|\nabla f(x_k)\|=\|\nabla g(y_k)\|,
    \qquad
    f(x_k)=g(y_k).
\]

Let
\[
    u_k:=\nabla g(y_k),
    \qquad
    B_k:=RP_k(RP_k)^\top = R P_k P_k^\top R^\top \in \mathbb R^{s\times s}.
\]
Then
\[
y_{k}=Rx_{k}
=Rx_{k-1}-\alpha R P_{k-1}P_{k-1}^\top \nabla f(x_{k-1})
= y_{k-1}-\alpha B_{k-1} u_{k-1},
\]
where we used \eqref{eq:grad-relation-final}. By \(L\)-smoothness of \(g\),
\begin{equation}\label{eq:smoothness-step-final}
    g(y_{k})-g(y_{k-1})
        \le -\alpha\,u_{k-1}^\top B_{k-1} u_{k-1} + \frac{\alpha^2 L}{2} \norm*{B_{k-1} u_{k-1}}^2.
\end{equation}

We now estimate the two terms on the right-hand side in conditional expectation. Let
\[
    \mathcal F_k:=\sigma(P_0,\dots,P_{k-1}).
\]
Since \(P_{k-1}\) is independent of \(\mathcal F_{k-1}\), conditioning on \(\mathcal F_{k-1}\) allows us to regard \(u_{k-1}\) as fixed.

\medskip

\noindent
\paragraph{Step 1: The Linear Term.}
Since \(P_{k-1}\) is Haar-distributed on \(\mathrm{St}(n,d)\), rotational invariance gives
\[
    \mathbb E\left[P_{k-1} P_{k-1}^\top\right]=\frac{d}{n}I_n.
\]
Therefore
\[
    \mathbb E\left[ B_{k-1} \mid \mathcal F_{k-1} \right]
        = R\,\mathbb E \left[ P_{k-1} P_{k-1} ^\top \right]\,R^\top
        = \frac{d}{n}RR^\top
        = \frac{d}{n}I_s,
\]
and thus
\begin{equation}\label{eq:linear-term-final}
    \mathbb E\!\left[u_{k-1}^\top B_{k-1} u_{k-1}\mid \mathcal F_{k-1}\right]
        = u_{k-1}^\top \mathbb E \left[ B_{k-1}\mid \mathcal F_{k-1} \right]u_{k-1}
        = \frac{d}{n}\|u_{k-1}\|^2.
\end{equation}

\paragraph{Step 2: The Quadratic Term.}
Write
\[
    P_{k-1} = G_{k-1} \left(G_{k-1}^\top G_{k-1} \right)^{-1/2},
\]
where \(G_k\in\mathbb R^{n\times d}\) is a standard Gaussian matrix. Then
\[
    B_{k-1} = R G_{k-1} \left(G_{k-1}^\top G_{k-1} \right)^{-1} G_{k-1}^\top R^\top.
\]
Since \(R\) has orthonormal rows, \(R G_{k-1}\in\mathbb R^{s\times d}\) is also a standard Gaussian matrix. A direct computation gives
\[
    u_{k-1}^\top B_{k-1} u_{k-1} 
        = \norm*{(G_{k-1}^\top G_{k-1})^{-1/2}(R G_{k-1})^\top u_{k-1}}^2.
\]
Moreover,
\begin{align}
    \|B_{k-1} u_{k-1}\|^2
        &= \norm*{R G_{k-1} (G_{k-1}^\top G_{k-1})^{-1}(R G_{k-1})^\top u_{k-1}}^2 \nonumber\\
        &\le \lambda_{\max}\!\Big( (G_{k-1}^\top G_{k-1})^{-1/2}(R G_{k-1})^\top(R G_{k-1})(G_{k-1}^\top G_{k-1})^{-1/2} \Big) \times \nonumber \\
        &\qquad \norm*{(G_{k-1}^\top G_{k-1})^{-1/2}(R G_{k-1})^\top u_{k-1}}^2. \label{eq:hp-pointwise-final}
\end{align}

Now fix \(t=\sqrt d\), and define
\begin{equation}\label{eq:beta-final}
    \beta := \frac{(\sqrt s+2\sqrt d)^2}{(\sqrt n-2\sqrt d)^2}.
\end{equation}
Let
\[
    \mathcal E_{k-1}:= \left\{ \lambda_{\max}\!
    \Big( (G_{k-1}^\top G_{k-1})^{-1/2}(R G_{k-1})^\top(R G_{k-1})(G_{k-1}^\top G_{k-1})^{-1/2} \Big) 
    \le \beta \right\},
\]
and
\[
    p_{k-1} := \mathbb P \left[\mathcal E_{k-1}^c \mid \mathcal F_{k-1} \right].
\]

On \(\mathcal E_{k-1}\), \eqref{eq:hp-pointwise-final} yields
\[
    \norm*{B_{k-1} u_{k-1}}^2\le \beta\, u_{k-1}^\top B_{k-1} u_{k-1}.
\]
On \(\mathcal E_{k-1}^c\), we only need to use that \(B_{k-1}\) is a positive contraction. Indeed, for every \(v\in\mathbb R^s\),
\[
    v^\top B_{k-1} v
        = \|P_{k-1}^\top R^\top v\|^2
        \le \|R^\top v\|^2
        = v^\top RR^\top v
        = \|v\|^2,
\]
because \(P_{k-1} P_{k-1}^\top \preceq I_n\) and \(RR^\top = I_s\). Therefore
\[
    0\preceq B_{k-1}\preceq I_s,
        \qquad\text{hence}\qquad
    \|B_{k-1} u_{k-1}\|\le \|u_{k-1}\|.
\]
Splitting according to \(\mathcal E_k\), we obtain
\begin{align}
\mathbb E\!\left[\|B_{k-1} u_{k-1}\|^2\mid \mathcal F_{k-1}\right]
    &= \mathbb E\!\left[\|B_{k-1} u_{k-1}\|^2\mathbf 1_{\mathcal E_{k-1}}\mid \mathcal F_{k-1}\right] + \mathbb E\!\left[\|B_{k-1} u_{k-1}\|^2\mathbf 1_{\mathcal E_{k-1}^c}\mid \mathcal F_{k-1}\right] \nonumber\\
    &\le \beta\,\mathbb E\!\left[u_{k-1}^\top B_{k-1} u_{k-1}\mid \mathcal F_{k-1}\right] + p_{k-1}\|u_{k-1}\|^2 \nonumber\\
    &= \left(\beta\frac{d}{n}+p_{k-1}\right)\|u_{k-1}\|^2, \label{eq:quadratic-cond-final}
\end{align}
where we used \eqref{eq:linear-term-final}.

\paragraph{Step 3: Bound on \(p_k\).}
Since
\[
    \lambda_{\max}\!
    \Big((G_{k-1}^\top G_{k-1})^{-1/2}(R G_{k-1})^\top(R G_{k-1})(G_{k-1}^\top G_{k-1})^{-1/2}\Big)
        \le \frac{\lambda_{\max}((R G_{k-1})^\top(R G_{k-1}))}{\lambda_{\min}(G_{k-1}^\top G_{k-1})},
\]
the event \(\mathcal E_{k-1}^c\) is contained in
\[
    \left\{\sigma_{\max}(R G_{k-1})>\sqrt s+2\sqrt d \right\} 
    \cup
    \left\{\sigma_{\min}(G_{k-1})<\sqrt n-2\sqrt d \right\}.
\]
By the standard Gaussian singular value bounds,
\[
    \mathbb P\!\left(\sigma_{\max}(R G_{k-1})>\sqrt s+2\sqrt d\right)\le e^{-d/2},
\]
and
\[
    \mathbb P\!\left(\sigma_{\min}(G_{k-1})<\sqrt n-2\sqrt d\right)\le e^{-d/2},
\]
where the second estimate is valid because \(2\sqrt d<\sqrt n\) by \eqref{eq:size-cond-final}. Therefore, by a union bound,
\begin{equation}\label{eq:pk-final}
    p_{k-1} 
        = \mathbb P \left[ \mathcal E_{k-1}^c \mid \mathcal F_{k-1} \right]
        = \mathbb P \left[ \mathcal E_{k-1}^c \right]
        \le 2e^{-d/2}.
\end{equation}

By the lower bound on \(d\) in \eqref{eq:size-cond-final},
\[
    2e^{-d/2}\le \frac{9s}{n^2}.
\]
Since \(d\ge 1\), this implies that
\begin{equation}\label{eq:pk-simplified-final}
    p_{k-1} \le \frac{9sd}{n^2}.
\end{equation}

\paragraph{Step 4: Simplify \(\beta\).}
From \(d\le s/16\), we have \(2\sqrt d\le \sqrt s/2\), so
\[
\sqrt s+2\sqrt d\le \frac32\sqrt s.
\]
From \(d\le n/16\), we have \(2\sqrt d\le \sqrt n/2\), so
\[
\sqrt n-2\sqrt d\ge \frac12\sqrt n.
\]
Therefore
\begin{equation}\label{eq:beta-upper-final}
\beta
=
\frac{(\sqrt s+2\sqrt d)^2}{(\sqrt n-2\sqrt d)^2}
\le
\frac{(\frac32\sqrt s)^2}{(\frac12\sqrt n)^2}
=
9\,\frac{s}{n}.
\end{equation}
Combining \eqref{eq:pk-simplified-final} and \eqref{eq:beta-upper-final}, we get
\begin{equation}\label{eq:beta-p-final}
    \beta\frac{d}{n}+p_{k-1}
        \le 9\,\frac{sd}{n^2}+9\,\frac{sd}{n^2}
        = 18\,\frac{sd}{n^2}.
\end{equation}

\paragraph{Step 5: One-step Descent in Expectation.}
Taking conditional expectation in \eqref{eq:smoothness-step-final} and using
\eqref{eq:linear-term-final} and \eqref{eq:quadratic-cond-final}, we obtain
\begin{align*}
    \mathbb E\!\left[g(y_{k-1})-g(y_{k})\mid \mathcal F_{k-1}\right]
        &\ge \alpha\,\frac{d}{n}\|u_{k-1}\|^2 -\frac{\alpha^2 L}{2} \left(\beta\frac{d}{n}+p_{k-1}\right)\|u_{k-1}\|^2 \\
        &\ge \left( \alpha\frac{d}{n} - \frac{\alpha^2 L}{2} \cdot 18\,\frac{sd}{n^2} \right)\|u_{k-1}\|^2.
\end{align*}
With the choice \(\alpha=\frac{n}{18sL}\), this becomes
\begin{align*}
    \mathbb E\!\left[g(y_{k-1})-g(y_{k})\mid \mathcal F_{k-1}\right]
        &\ge \left(\frac{d}{18sL} - \frac{L}{2}\cdot \frac{n^2}{(18sL)^2}\cdot 18\,\frac{sd}{n^2} \right)\|u_{k-1}\|^2 \\
        &= \left( \frac{d}{18sL} - \frac{d}{36sL} \right)\|u_{k-1}\|^2 \\
        &= \frac{d}{36sL}\|u_{k-1}\|^2.
\end{align*}
Using \(\|u_{k-1}\|=\|\nabla f(x_{k-1})\|\), we obtain
\begin{equation}\label{eq:one-step-final}
    \mathbb E\!\left[f(x_{k-1})-f(x_{k})\mid \mathcal F_{k-1}\right]
        \ge \frac{d}{36sL}\,\|\nabla f(x_{k-1})\|^2.
\end{equation}
Taking expectation again yields
\[
    \mathbb E\!\left[f(x_{k-1})-f(x_{k})\right]
        \ge \frac{d}{36sL}\, \mathbb E\!\left[\|\nabla f(x_{k-1})\|^2\right].
\]

\paragraph{Step 6: Telescoping.}
Summing over \(k=0,\dots,N-1\), we obtain
\[
    \frac{d}{36sL} \sum_{k=1}^{N} \mathbb E\!\left[\|\nabla f(x_{k-1})\|^2\right]
    \le \sum_{k=1}^{N}\mathbb E\!\left[f(x_{k-1})-f(x_{k})\right]
    = f(x_0)-\mathbb E[f(x_N)]
    \le f(x_0)-f^\ast.
\]
Therefore
\[
    \frac1N \sum_{k=1}^{N}\mathbb E\!\left[\|\nabla f(x_{k-1})\|^2\right]
        \le \frac{36Ls}{Nd}\,\bigl(f(x_0)-f^\ast\bigr).
\]
Finally,
\[
    \min_{1\le k\le N}\mathbb E\!\left[\|\nabla f(x_{k-1})\|^2\right]
    \le
    \frac1N\sum_{k=1}^{N}\mathbb E\!\left[\|\nabla f(x_{k-1})\|^2\right],
\]
which proves \eqref{eq:main-bound-final}.
\end{proof}
\section{Proof for Minibatch Variant}\label{sec:mb_convergence}
This section is dedicated to proving both the iteration and computational complexity of the mini-batch variant of the algorithm. The proof is split into 2 parts: first obtain an expression for the lower bound of the alignment in expectation, then subsequently simplifying the iteration convergence proof with a tweak to the proof used in Section \ref{sec:proof_general}.

\subsection{Proof for Alignment in Minibatch Setting (Proposition \ref{prop:minibatch-refresh})}
The persistence of the alignment tells us that the same guidance vector can still be used in the subsequent gradient descent step. To show this, we split the proof into 2 main parts.
\begin{enumerate}
    \item Find a recurrence for $\mathbb{E} [ \langle \hat{v}_{k}, \gfk \rangle^2 ] \geq \gamma_k \mathbb{E} [\|\gfk\|^2] - \eta_k \sigma^2$, where $\eta_k$ can even potentially be $0$.
    \item Show that $\gamma_k > \delta$ is satisfied under the assumption $r_{i+1} - r_i = i^\beta$.
\end{enumerate}

\paragraph{Part 1a: Obtain a recursion with respect to the previous gradient.}
Define the $\sigma$-algebras $\mathcal{F}_k = \sigma(\nabla g_0, \widetilde P_0, \nabla g_1, \widetilde P_1, \cdots, \nabla g_{k-1}, \widetilde P_{k-1})$ and $\mathcal{G}_k = \sigma(\mathcal{F}_k, \nabla g_k)$. Then, by re-writing $\gfk = \gfk - \gfkk + \gfkk$ like before, we can get the following lower bound:
\begin{align*}
    \langle \hat{v}_{k}, \gfk \rangle^2 
        &\geq \langle \hat{v}_{k}, \gfkk \rangle^2 - 2 L |\langle \hat{v}_{k}, \gfkk \rangle | \|x_k - x_{k-1}\|.
\end{align*}
For the second term on the RHS, the descent direction is the projected gradient of the gradient estimate, instead of the projected gradient of the true gradient:
\[
    x_k - x_{k-1} = - \alpha_{k-1} P_{k-1} P_{k-1}^\top \nabla g_{k-1}.
\]
Instead of taking conditional expectation under $\mathcal{F}_{k-1}$, we first take it under $\mathcal{G}_{k-1}$ to get
\begin{align*}
    \mathbb{E} \left[ \langle \hat{v}_{k}, \gfk \rangle^2 \mid \mathcal{G}_{k-1} \right]
        &\geq \langle \hat{v}_{k}, \gfkk \rangle^2 - 2 \alpha_{k-1} L \left|\left\langle \hat{v}_{k}, \gfkk \right\rangle\right| \mathbb{E} \left[ \|P_{k-1} P_{k-1}^\top \nabla g_{k-1}\| \mid \mathcal{G}_{k-1} \right],
\end{align*}
where the expectation term on the RHS can be bounded similarly as before to obtain
\begin{align*}
    \mathbb{E} \left[ \|P_{k-1} P_{k-1}^\top \nabla g_{k-1}\| \mid \mathcal{G}_{k-1} \right]
        &\leq \sqrt{\left(1-\frac{d}{n-1}\right)} \left|\left\langle \vkkh, \nabla g_{k-1} \right\rangle\right| + \sqrt{\frac{d}{n-1}} \|\nabla g_{k-1}\|,
\end{align*}
finally giving us
\begin{align*}
    \mathbb{E} \left[ \langle \hat{v}_{k}, \gfk \rangle^2 \mid \mathcal{G}_{k-1} \right]
        &\geq \langle \hat{v}_{k-1}, \gfkk \rangle^2 - 2 \alpha_{k-1} L |\langle \hat{v}_{k-1}, \gfkk \rangle | \times\\
        &\quad \left( \sqrt{ \left(1-\frac{d}{n-1}\right)} |\langle \vkkh, \nabla g_{k-1} \rangle| + \sqrt{\frac{d}{n-1}} \|\nabla g_{k-1}\| \right).
\end{align*}
Compared to earlier, where we only had $\gfkk$ throughout, here we have $\nabla g_{k-1}$, so we have to work around differently. Using the identity $2ab \leq a^2 + b^2$, we can bound the second term by
\begin{equation*}
    |\langle \hat{v}_{k-1}, \gfkk \rangle |  |\langle \vkkh, \nabla g_{k-1} \rangle| \leq \frac{1}{2} \langle \hat{v}_{k-1}, \gfkk \rangle^2 + \frac{1}{2}  \langle \vkkh, \nabla g_{k-1} \rangle^2.
\end{equation*}
As for the third term, taking expectation under $\mathcal{F}_{k-1}$, we can obtain
\begin{align*}
    \mathbb{E} \left[  \left|\langle \hat{v}_{k-1}, \gfkk \rangle\right| \|\nabla g_{k-1}\| \mid \mathcal{F}_{k-1} \right]
        &\leq \mathbb{E} \left[ \|\nabla g_{k-1}\| \mid \mathcal{F}_{k-1} \right] \cdot \left\|\gfkk\right\| \\
        &\leq \|\gfkk\|\sqrt{\sigma_m^2 + \|\gfkk\|^2} \\
        &\leq \frac{1}{2}\|\gfkk\|^2 + \frac{1}{2} \left(\sigma_m^2 + \|\gfkk\|^2 \right) \\
        &= \|\gfkk\|^2 + \frac{1}{2}\sigma_m^2,
\end{align*}
where we used the same inequality as before and the upper bound of the variance of the mini-batch. Substituting everything back into the conditional expectation of the correlation, we obtain the lower bound
\begin{align*}
    \mathbb{E} \left[ \langle \hat{v}_{k}, \gfk \rangle^2 \mid \mathcal{G}_{k-1} \right]
        &\geq \langle \hat{v}_{k-1}, \gfkk \rangle^2 -2\alpha_{k-1}L \sqrt{\frac{d}{n-1}} \left( \|\gfkk\|^2 + \frac{1}{2} \sigma_m^2 \right) \\
        &\qquad - \alpha_{k-1} L \sqrt{1-\frac{d}{n-1}} \Big( \langle \hat{v}_{k-1}, \gfkk \rangle^2 + \mathbb{E} [\langle \vkkh, \nabla g_{k-1} \rangle^2 \mid \mathcal{F}_{k-1}] \Big) \\
        &= \left( 1- \alpha_{k-1} L \sqrt{1 - \frac{d}{n-1}} \right) \langle \vkkh, \gfkk \rangle^2 - \alpha_{k-1}L \sqrt{\frac{d}{n-1}} \sigma_m^2\\
        &\qquad - \alpha_{k-1} L \sqrt{1-\frac{d}{n-1}} \mathbb{E} \left[\langle \vkkh, \nabla g_{k-1} \rangle^2 \mid \mathcal{F}_{k-1} \right] - 2\alpha_{k-1}L\sqrt{\frac{d}{n-1}} \|\gfkk\|^2 .
\end{align*}
For the second term, we can similarly express $\nabla g_{k-1} = \nabla g_{k-1} - \gfkk + \gfkk$ to get
\begin{align*}
    \mathbb{E} \left[\langle \vkkh, \nabla g_{k-1} \rangle^2 \mid \mathcal{F}_{k-1} \right]
        &= \mathbb{E} \left[ \left(\langle \vkkh, \nabla g_{k-1} -\gfkk \rangle + \langle \vkkh, \gfkk \rangle \right)^2  \mid \mathcal{F}_{k-1} \right] \\
        &= \mathbb{E} \left[\langle \vkkh, \nabla g_{k-1} -\gfkk \rangle^2 + \langle \vkkh, \gfkk \rangle^2 \mid \mathcal{F}_{k-1} \right]
\end{align*}
where the cross term cancels out due to $\gfkk = \mathbb{E}[\nabla g_{k-1}]$. 
We can then bound the first term with Cauchy-Schwartz and the variance assumption to get
\begin{align*}
    \mathbb{E} \left[\langle \vkkh, \nabla g_{k-1} \rangle^2 \mid \mathcal{F}_{k-1} \right]
        &\leq \sigma_m^2 + \langle \vkkh, \gfkk \rangle^2.
\end{align*}
Substituting this bound and simplifying everything, we obtain
\begin{align*}
    \mathbb{E} \left[ \langle \hat{v}_{k}, \gfk \rangle^2 \mid \mathcal{G}_{k-1} \right]
        &\geq \left( 1- 2\alpha_{k-1} L \sqrt{1 - \frac{d}{n-1}} \right) \langle \vkkh, \gfkk \rangle^2 \\
        &\qquad - 2\alpha_{k-1}L\sqrt{\frac{d}{n-1}} \|\gfkk\|^2 - \alpha_{k-1}L \left(\sqrt{\frac{d}{n-1}} + \sqrt{1-\frac{d}{n-1}} \right) \sigma_m^2 \\
        &\geq \left( 1- 2\alpha_{k-1} L \sqrt{1 - \frac{d}{n-1}} \right) \langle \vkkh, \gfkk \rangle^2 \\&\qquad- 2\alpha_{k-1}L\sqrt{\frac{d}{n-1}} \|\gfkk\|^2 - 2\alpha_{k-1}L \sigma_m^2.
\end{align*}
Using the assumption that $\mathbb{E} \left[ \langle \hat{v}_{k-1}, \gfkk \rangle^2 \right] \geq \gamma_{k-1} \mathbb{E} [\|\gfkk\|^2] - \eta_{k-1} \sigma_m^2$, we can get
\begin{align*}
    \mathbb{E} [ \langle \hat{v}_{k}, \gfk \rangle^2 ]
        &\geq \left[\left( 1- 2\alpha_{k-1} L \sqrt{1 - \frac{d}{n-1}} \right)\gamma_{k-1} - 2\alpha_{k-1}L\sqrt{\frac{d}{n-1}} \right] \mathbb{E} [\|\gfkk\|^2] \\
        &\qquad - \left[\left( 1- 2\alpha_{k-1} L \sqrt{1 - \frac{d}{n-1}} \right)\eta_{k-1} + 2\alpha_{k-1}L \right] \sigma_m^2.
\end{align*}
\paragraph{Part 1b: Obtain a relationship between norms of consecutive gradient.}
This is done similarly to the vanilla case and the technique is the same; we just have to note that there is a variance bound on the mini-batch gradient instead of directly working with the variance of the true gradient. Using the decomposition of $\gfk = \gfk - \gfkk + \gfkk$, we get
\begin{align*}
    \|\gfk\| &\leq \|\gfk - \gfkk\| + \|\gfkk\| \leq L \|x_k - x_{k-1}\| + \|\gfkk\|.
\end{align*}
For the first term, we have
\begin{align*}
    \mathbb{E} \left[ \|x_k - x_{k-1} \|^2 \mid \mathcal{F}_{k-1}\right]
        &= \alpha_{k-1}^2 \mathbb{E} \left[ \|P_{k-1} P_{k-1}^\top \nabla g_{k-1} \|^2 \mid \mathcal{F}_{k-1}\right] \\
        &\leq \alpha_{k-1}^2 \mathbb{E} \left[ \|\nabla g_{k-1} \|^2 \mid \mathcal{F}_{k-1}\right] \\
        &\leq \alpha_{k-1}^2 \left( \|\gfkk\|^2 + \sigma_m^2 \right).
\end{align*}
Consequently, we can get
\begin{align*}
    \mathbb{E} \left[ \|x_k - x_{k-1} \| \mid \mathcal{F}_{k-1}\right]
        &\leq \alpha_{k-1} \left( \|\gfkk\| + \sigma_m \right).
\end{align*}
Then, we have
\begin{align*}
    \mathbb{E}\left[ \| \gfk \|^2 \mid \mathcal{F}_{k-1} \right]
        &\leq \|\gfkk\|^2 + 2L \|\gfkk\| \mathbb{E} \left[  \|x_k - x_{k-1}\| \mid \mathcal{F}_{k-1} \right] \\&\qquad+ L^2  \mathbb{E} \left[  \|x_k - x_{k-1}\|^2 \mid \mathcal{F}_{k-1} \right] \\
        &\leq \|\gfkk\|^2 + 2\alpha_{k-1} L \|\gfkk\| ( \sigma_m + \|\gfkk\| ) \\
        &\qquad + \alpha_{k-1}^2 L^2 (\sigma_m^2 + \|\gfkk\|^2 ) \\
        &= (1 + \alpha_{k-1} L)^2 \|\gfkk\|^2 + 2\alpha_{k-1} L \|\gfkk\|\sigma_m + \alpha_{k-1}^2 L^2 \sigma_m^2.
\end{align*}
The cross-term can once again be bounded by $2\|\gfkk\|\sigma_m \leq \|\gfkk\|^2 + \sigma_m^2$ to get
\begin{align*}
    \mathbb{E} \left[ \| \gfk \|^2 \mid \mathcal{F}_{k-1} \right]
    &\leq (1 + \alpha_{k-1} L)^2 \|\gfkk\|^2 + \alpha_{k-1} L ( \|\gfkk\|^2 +\sigma_m^2 ) + \alpha_{k-1}^2 L^2 \sigma_m^2 \\
    &= \left[ (1 + \alpha_{k-1} L)^2 + \alpha_{k-1} L \right] \|\gfkk\|^2 + \left[ \alpha_{k-1} L + \alpha_{k-1}^2 L^2 \right] \sigma_m^2.
\end{align*}
Rewriting this inequality, we have
\begin{equation}
    \mathbb{E} \left[\|\gfkk\|^2 \right] 
        \geq \frac{1}{(1+\alpha_{k-1} L)^2 + \alpha_{k-1} L} \mathbb{E} \left[\|\gfk\|^2\right] - \frac{(1+\alpha_{k-1} L) \alpha_{k-1} L}{(1+\alpha_{k-1} L)^2 + \alpha_{k-1} L} \sigma_m^2.
\end{equation}
With this bound on the norms of consecutive gradients, we obtain the expression
\begin{align*}
    \mathbb{E} [ \langle \hat{v}_{k}, \gfk \rangle^2 ]
        &= \frac{\left( 1- 2\alpha_{k-1} L \sqrt{1 - \frac{d}{n-1}} \right)\gamma_{k-1} - 2\alpha_{k-1}L\sqrt{\frac{d}{n-1}} }{(1+\alpha_{k-1} L)^2 + \alpha_{k-1} L} \mathbb{E} [\|\gfk\|^2] \\
        &\quad - \left[ 2\alpha_{k-1} L +  \frac{\left[\left( 1- 2\alpha_{k-1} L \sqrt{1 - \frac{d}{n-1}} \right)\gamma_{k-1} - 2\alpha_{k-1}L\sqrt{\frac{d}{n-1}} \right] \left[ 1+\alpha_{k-1} L \right] \alpha_{k-1} L }{(1+\alpha_{k-1} L)^2 + \alpha_{k-1} L} \right. \\
        &\qquad \qquad \left. + \left( 1- 2\alpha_{k-1} L \sqrt{1 - \frac{d}{n-1}} \right) \eta_{k-1} \right] \sigma_m^2 \\
        &= \gamma_k \mathbb{E} [\|\gfk\|^2] - \eta_k \sigma_m^2,
\end{align*}
where we have the recurrence relation on $\gamma_k$ and $\eta_k$ as
\begin{align}
    \gamma_k 
        &= \frac{\left( 1- 2\alpha_{k-1} L \sqrt{1 - \frac{d}{n-1}} \right)\gamma_{k-1} - 2\alpha_{k-1}L\sqrt{\frac{d}{n-1}} }{(1+\alpha_{k-1} L)^2 + \alpha_{k-1} L} \label{rec:gamk_mb}\\
    \eta_k 
        &= 2\alpha_{k-1} L +  \frac{\left[\left( 1- 2\alpha_{k-1} L \sqrt{1 - \frac{d}{n-1}} \right)\gamma_{k-1} - 2\alpha_{k-1}L\sqrt{\frac{d}{n-1}} \right] \left[ 1+\alpha_{k-1} L \right] \alpha_{k-1} L }{(1+\alpha_{k-1} L)^2 + \alpha_{k-1} L} \nonumber \\
        &\qquad + \left( 1- 2\alpha_{k-1} L \sqrt{1 - \frac{d}{n-1}} \right) \eta_{k-1}. \label{rec:etak_mb}
\end{align}

\paragraph{Part 2: Solving the recurrence under step schedule assumption.}
First, we would want to find out $\gamma_0, \eta_0$, i.e. the value of $\gamma, \eta$ at the step where we update the alignment vector $v_0$.
Given that have $v_0 = \frac{1}{m'} \sum_{i \in B'} \nabla f_i(x_0)$ for a mini-batch $B'$, where we choose $|B'| = m' > m$, we can obtain
\begin{align*}
    \mathbb{E} \left[ \langle v_0, \nabla f(x_0) \rangle \right]^2 
        &= \mathbb{E} \left[ \langle \hat{v}_0, \nabla f(x_0) \rangle \|v_0\|\right]^2 \\
        &\leq \mathbb{E} \left[ \langle \hat{v}_0, \nabla f(x_0) \rangle^2 \right] \mathbb{E} \left[ \|v_0\|^2 \right] \\
        &\leq \mathbb{E} \left[ \langle \hat{v}_0, \nabla f(x_0) \rangle^2 \right] \left( \sigma_{m'}^2 + \| \nabla f(x_0) \|^2 \right).
\end{align*}
On the other hand, since $v_0$ is an unbiased estimator of the gradient, we also have
\begin{align*}
    \mathbb{E} \left[ \langle v_0, \nabla f(x_0) \rangle \right]^2  = \| \nabla f(x_0) \|^4.
\end{align*}
Combining these 2, we get
\begin{equation*}
    \mathbb{E} \left[ \langle \hat{v}_0, \nabla f(x_0) \rangle^2 \right] \geq \frac{\| \nabla f(x_0) \|^4}{\sigma_{m'}^2 + \| \nabla f(x_0) \|^2} = \left(1 - \frac{\sigma_{m'}^2}{\sigma_{m'}^2 + \| \nabla f(x_0) \|^2} \right) \| \nabla f(x_0) \|^2,
\end{equation*}
giving us
\begin{align}
    \gamma_0 &= 1 - \frac{\sigma_{m'}^2}{\sigma_{m'}^2 + \| \nabla f(x_0) \|^2} \label{eqn:mb_gam0} \\
    \eta_0 &= 0. \label{eqn:mb_eta0}
\end{align}

Assuming $r_i < k < r_{i+1}$, where $r_i$ is the $i^{th}$ refresh step, we can use the recurence relation of $\gamma_k$ to obtain
\begin{align*}
    \gamma_k 
        &\geq \frac{\left( 1- 2\alpha_{k-1} L \right)\gamma_{k-1} - 2\alpha_{k-1}L\sqrt{\frac{d}{n-1}} }{(1+\alpha_{k-1} L)^2 + \alpha_{k-1} L} \\
        &\geq \gamma_{r_i} \prod_{i=r_i}^{k-1} a_i - \sum_{i=r_i}^{k-1} b_i \prod_{j=i+1}^{k-1} a_j,
\end{align*}
where the coefficients $a_j$ and $b_j$ can be simplified to
\begin{align*}
    a_j &= \frac{1- 2\alpha_{j-1} L}{(1+\alpha_{j-1} L)^2 + \alpha_{j-1} L} = \frac{1 - \frac{2}{\sqrt{j}}}{\left( 1 + \frac{1}{\sqrt{j}} \right)^2 + \frac{1}{\sqrt{j}}} = \frac{(\sqrt{j}-2)(\sqrt{j})}{j + 1 + 3\sqrt{j}}\\
    b_j &= \frac{2\alpha_{j-1}L\sqrt{\frac{d}{n-1}} }{(1+\alpha_{j-1} L)^2 + \alpha_{j-1} L} = \frac{ 2\sqrt{j} }{j + 1 + 3\sqrt{j}} \sqrt{\frac{d}{n-1}},
\end{align*}
under the assumption that the step-size is $\alpha_k = \frac{1}{L\sqrt{k+1}}$.
Equivalently, we can obtain that the correlation right before the next update has to satisfy the equation
\begin{equation}\label{eqn:rec_nec}
    \gamma_{r_{i+1} - 1} 
        \geq \gamma_{r_i} \prod_{k=r_i}^{r_{i+1}-2} a_k - \sum_{k=r_i}^{r_{i+1}-2} b_k \prod_{j=k+1}^{r_{i+1}-2} a_j > \delta.
\end{equation}
to ensure that it is still at least $\delta$. To make calculations slightly easier, we will simplify the expressions for $a_j$ and $b_j$ to get
\begin{align*}
    a_j 
    &= \frac{(\sqrt{j}-2)(\sqrt{j})}{j + 1 + 3\sqrt{j}}
    = \frac{j-2\sqrt{j}}{j + 1 + 3\sqrt{j}}
    = 1 - \frac{5\sqrt{j} + 1}{j + 1 + 3\sqrt{j}} \\
    &\geq 1 - \frac{5\sqrt{j}}{j + 3\sqrt{j}} 
    = 1 - \frac{5}{3 + \sqrt{j}}, \\
    b_j &= \frac{ 2\sqrt{j} }{j + 1 + 3\sqrt{j}} \sqrt{\frac{d}{n-1}} \leq \frac{2}{\sqrt{j}}\sqrt{\frac{d}{n-1}}.
\end{align*}
With these simplified expressions, we can simplify the expression in \ref{eqn:rec_nec} to get
\begin{align*}
    \gamma_{r_{i+1}} 
        &\geq \gamma_{r_i} \prod_{k=r_i+1}^{r_{i+1}-2} a_k - \sum_{k=r_i}^{r_{i+1}-2} b_k \prod_{j=k+1}^{r_{i+1}-2} a_j \\
        &\geq \gamma_{r_i} a_{r_i+1}^{r_{i+1} - r_i - 1} - \sum_{k=r_i}^{r_{i+1}-2} b_k \\
        &\geq \gamma_{r_i} \left( 1 - \frac{5}{3 + \sqrt{r_i+1}} \right)^{r_{i+1} - r_i - 1} - 2\sqrt{\frac{d}{n-1}} \sum_{k=r_i}^{r_{i+1}-2}  \frac{ 1 }{\sqrt{j }}.
\end{align*}
For the second term, we use the approximation $\sum_{i=m}^{n} \frac{1}{\sqrt{i}} \leq 2(\sqrt{n} - \sqrt{m}) + m^{-1/2}$ to get that $\sum_{k=r_i}^{r_{i+1}-2}  \frac{ 1 }{\sqrt{j }} \leq 2 (\sqrt{r_{i+1}} - \sqrt{r_i}) + (r_i)^{-1/2}$ for sufficiently large $r_i$. Letting $r_{i+1} - r_i = \epsilon_i$ and assuming $\epsilon_i \ll r_i$, we can expand the square root by Taylor's formula to get the approximation
\begin{equation}
    \sqrt{r_i + \epsilon_i} - \sqrt{r_i} = \sqrt{r_i} \left( \sqrt{1 + \frac{\epsilon_i}{r_i}} - 1 \right) = \sqrt{r_i} \left( \frac{\epsilon_i}{2r_i} + o\left(\frac{\epsilon_i}{r_i}\right) \right).
\end{equation}
Consequently, we obtain the bound
\begin{align*}
    \gamma_{r_{i+1}} 
        &\geq \gamma_{r_i} \left( 1 - \frac{5}{3 + \sqrt{r_i+1}} \right)^{r_{i+1} - r_i} - 2\sqrt{\frac{d}{n-1}} \left( \frac{r_{i+1} - r_i}{\sqrt{r_i}}  + o\left(\frac{\sqrt{r_{i+1}} - \sqrt{r_i}}{\sqrt{r_i}}\right)\right) \\
        &\geq \gamma_{r_i} \left( 1 - \frac{5}{3 + \sqrt{r_i}} \right)^{r_{i+1} - r_i} - 4\sqrt{\frac{d}{n-1}} \left( \frac{r_{i+1} - r_i}{\sqrt{r_i}}\right)
\end{align*}
for sufficiently large $r_i$. Using the assumption that $r_{i+1} - r_i = i^\beta$, we can find that the order of $r_i$ is approximately bounded by
\begin{equation}\label{eqn:ri_lorder}
    r_i = \sum_{k=1}^i (r_k - r_{k-1}) = \sum_{k=1}^i k^\beta \geq \int_{0}^i k^\beta dk = \frac{i^{\beta + 1} }{\beta + 1}.
\end{equation}
and
\begin{equation*}
    \sqrt{r_i} \geq \sqrt{\frac{i^{\beta+1}}{\beta + 1}} \geq \frac{i^{(\beta + 1)/2}}{\sqrt{\beta + 1}}
\end{equation*}
for sufficiently large $i$. We can now simplify the bound to get
\begin{align*}
    \gamma_{r_{i+1}} 
        &\geq \gamma_{r_i} \left( 1 - \frac{5\sqrt{\beta + 1}}{i^{(\beta + 1)/2}} \right)^{i^\beta} - 4\sqrt{\frac{d}{n-1}} \left( \sqrt{\beta + 1} \frac{i^\beta}{i^{(\beta + 1)/2}} \right) \\
        &= \gamma_{r_i} \left( 1 - \frac{5\sqrt{\beta + 1}}{i^{(\beta + 1)/2}} \right)^{i^\beta} - 4\sqrt{\frac{d(\beta + 1)}{n-1}} \left( i^{(\beta - 1)/2} \right).
\end{align*}
Setting the RHS to be $> \delta$ and simplifying the inequality, we obtain
\begin{align*}
    \gamma_{r_i} \left( 1 - \frac{5\sqrt{\beta+1}}{i^{(\beta + 1)/2}} \right)^{i^\beta} 
        &> \delta +  4\sqrt{\frac{d(\beta+1)}{n-1}} i^{(\beta-1)/2}.
\end{align*}
Taking logarithm on both sides and moving the terms around, we get
\begin{equation*}
    i^{\beta}
        < \frac{\log\left( \frac{\delta +  4\sqrt{\frac{d(\beta+1)}{n-1}} i^{(\beta-1)/2}}{\gamma_{r_i}} \right)}{\log \left( 1 - \frac{5\sqrt{\beta+1}}{i^{(\beta + 1)/2}} \right)}
        = \frac{\log\left( \frac{\gamma_{r_i}}{\delta +  4\sqrt{\frac{d(\beta+1)}{n-1}} i^{(\beta-1)/2}} \right)}{\log \left( 1 + \frac{5\sqrt{\beta+1}}{i^{(\beta + 1)/2}-5\sqrt{\beta+1}} \right)}.
\end{equation*}
For sufficiently large $i$, we can bound the numerator by $\log(\gamma_{r_i}/2\delta)$ while the denominator can be expanded using Taylor's expansion to get
\begin{align*}
    i^\beta
        &\leq \frac{\log\left( \frac{\gamma_{r_i}}{2\delta} \right)}{ \frac{5\sqrt{\beta+1}}{i^{(\beta + 1)/2}-5\sqrt{\beta+1}} (1 + o(1))} \\
        &\leq \frac{\log\left( \frac{\gamma_{r_i}}{2\delta} \right)}{ 5\sqrt{\beta+1}(1 + o(1))} i^{(\beta + 1)/2}\\
        &\lesssim \frac{\log\left( \frac{\gamma_{r_i}}{2\delta} \right)}{ 5\sqrt{\beta+1}} i^{(\beta + 1)/2}.
\end{align*}
Consequently, we have an upper-bound on the equation satisfied by $\beta$:
\begin{equation}
    i^{(\beta-1)/2} 
        \lesssim  \frac{1}{5} \log(\gamma_{r_i}/2\delta),
\end{equation}
where $\gamma_{r_i} = 1 - \frac{\sigma_{m'}^2}{\sigma_{m'}^2 + \| \nabla f(x_{r_i}) \|^2}$. For sufficiently large $i$ and large $\|\nabla f(x_{r_i})\|$ (where we are still far from convergence), this inequality is eventually satisfied. We simply translate the starting iteration with $k_0$ satisfying the inequality, with $\alpha_k = \frac{1}{\sqrt{k+k_0}}$, where 
\begin{equation}\label{eqn:mb_k0}
    k_0 = r_{i^\ast}. \qquad i^\ast \geq \left(\frac{5}{\log(1/2\delta)}\right)^{2/(1-\beta)}.
\end{equation}

\subsection{Proof for Iteration Complexity in Minibatch Setting (Theorem \ref{thm:iter_mb_general})}
The proof for the convergence is similar to the vanilla case, just that now we have the mini-batch gradient and the variance term in the lower-bound for the correlation. By L-smoothness, we have
\begin{align*}
    f(x_{k}) - f(x_{k-1}) 
        &= -\alpha_{k-1} \langle \nabla f(x_{k-1}), P_{k-1} P_{k-1}^\top \nabla g_{k-1} \rangle + \frac{\alpha_{k-1}^2 L}{2} \|P_{k-1} P_{k-1}^\top \nabla g_{k-1}\|^2.
\end{align*}
Using the same definitions for $\mathcal{F}_k$ and $\mathcal{G}_k$, we have
\begin{align*}
    \mathbb{E} [f(x_{k}) - f(x_{k-1}) \mid \mathcal{G}_{k-1}]
        &= -\alpha_{k-1} \left\langle \nabla f(x_{k-1}), \left[ \left(1-\frac{d}{n-1}\right)\hat{v}_{k-1} \hat{v}_{k-1}^\top + \frac{d}{n-1} I \right] \nabla g_{k-1} \right\rangle \\
        &\qquad + \frac{\alpha_{k-1}^2 L}{2} \left\langle \nabla g_{k-1}, \left[ \left(1-\frac{d}{n-1} \right)\hat{v}_{k-1} \hat{v}_{k-1}^\top + \frac{d}{n-1} I\right] \nabla g_{k-1} \right\rangle \\
        &= -\alpha_{k-1} \left(1-\frac{d}{n-1}\right) \langle \nabla f(x_{k-1}), \hat{v}_{k-1} \rangle \langle \hat{v}_{k-1}, \nabla g_{k-1} \rangle - \alpha_{k-1} \frac{d}{n-1} \langle \nabla f(x_{k-1}), \nabla g_{k-1} \rangle \\
        &\qquad + \frac{\alpha_{k-1}^2 L}{2} \left(1-\frac{d}{n-1}\right) \langle \nabla g_{k-1}, \hat{v}_{k-1} \rangle^2 + \frac{\alpha_{k-1}^2 L}{2}\frac{d}{n-1} \|\nabla g_{k-1}\|^2 \\
        &\leq -\alpha_{k-1} \left(1-\frac{d}{n-1}\right) \langle \nabla f(x_{k-1}), \hat{v}_{k-1} \rangle \langle \hat{v}_{k-1}, \nabla g_{k-1} \rangle \\
        &\qquad - \alpha_{k-1} \frac{d}{n-1} \langle \nabla f(x_{k-1}), \nabla g_{k-1} \rangle  + \frac{\alpha_{k-1}^2 L}{2}\|\nabla g_{k-1}\|^2.
\end{align*}
Taking expectation with respect to $\mathcal{F}_{k-1}$ and using the tower property of conditional expectation, we obtain
\begin{align*}
    \mathbb{E} \left[ f(x_{k}) - f(x_{k-1}) \mid \mathcal{F}_{k-1} \right]
        &= \mathbb{E} \left[ \mathbb{E} \left[ f(x_{k}) - f(x_{k-1}) \mid \mathcal{G}_{k-1} \right] \mid \mathcal{F}_{k-1} \right] \\
        &\leq -\alpha_{k-1} \left(1-\frac{d}{n-1}\right) \langle \nabla f(x_{k-1}), \hat{v}_{k-1} \rangle^2 - \alpha_{k-1} \frac{d}{n-1} \|\nabla f(x_{k-1}) \|^2 \\
        &\qquad+ \frac{\alpha_{k-1}^2 L}{2} \mathbb{E} \left[ \|\nabla g_{k-1}\|^2 \mid \mathcal{F}_{k-1} \right],
\end{align*}
since $\nabla g_{k-1}$ is an unbiased estimator of the gradient. i.e. $\mathbb{E}[\nabla g_{k-1}] = \nabla f(x_{k-1})$. Equivalently, we have
\begin{align}
    \mathbb{E} \left[ f(x_{k}) - f(x_{k-1}) \right]
        &\leq -\alpha_{k-1} \left(1-\frac{d}{n-1}\right) \mathbb{E} \left[ \langle \nabla f(x_{k-1}), \hat{v}_{k-1} \rangle^2 \right] - \alpha_{k-1} \frac{d}{n-1} \mathbb{E} \left[ \|\nabla f(x_{k-1}) \|^2 \right] \nonumber\\ 
        &\qquad+ \frac{\alpha_{k-1}^2 L}{2} \mathbb{E} \left[ \|\nabla g_{k-1}\|^2 \right] \label{eqn:mb_general_iterationdecay}.
\end{align}
Using the assumption on the variance of the mini-batch gradient and the lower bound of the correlation, we obtain
\begin{align*}
    \mathbb{E} \left[ f(x_{k}) - f(x_{k-1}) \right]
        &\leq -\alpha_{k-1} \left(1-\frac{d}{n-1}\right) \left(\gamma_{k-1}\mathbb{E} \left[ \|\gfkk\|^2 \right] - \eta_{k-1} \sigma_m^2 \right) \\
        &\qquad - \alpha_{k-1} \frac{d}{n-1} \mathbb{E} \left[ \|\nabla f(x_{k-1}) \|^2 \right] + \frac{\alpha_{k-1}^2 L}{2} \left( \sigma_m^2 + \mathbb{E} \left[ \|\gfkk\|^2 \right] \right) \\
        &= -\left( \alpha_{k-1} \left(1 - \frac{d}{n-1} \right) \gamma_{k-1} + \alpha_{k-1} \frac{d}{n-1} - \frac{\alpha_{k-1}^2 L}{2} \right) \mathbb{E} \left[ \|\nabla f(x_{k-1}) \|^2 \right] \\
        &\qquad + \left( \alpha_{k-1} \left(1 - \frac{d}{n-1} \right) \eta_{k-1} + \frac{\alpha_{k-1}^2 L}{2} \right) \sigma_m^2.
\end{align*}
By assumption, we have $\gamma_{k-1} > \delta$, allowing us to simplify the above to
\begin{align*}
    \mathbb{E} \left[ f(x_{k}) - f(x_{k-1}) \right]
        &\leq -\left( \alpha_{k-1} \left(1 - \frac{d}{n-1} \right) \delta + \alpha_{k-1} \frac{d}{n-1} - \frac{\alpha_{k-1}^2 L}{2} \right) \mathbb{E} \left[ \|\nabla f(x_{k-1}) \|^2 \right] \\
        &\qquad + \left( \alpha_{k-1} \left(1 - \frac{d}{n-1} \right) \eta_{k-1} + \frac{\alpha_{k-1}^2 L}{2} \right) \sigma_m^2.
\end{align*}
for $\alpha_{k-1}$ such that $\alpha_{k-1} \left(1 - \frac{d}{n-1} \right) \delta + \alpha_{k-1} \frac{d}{n-1} - \frac{\alpha_{k-1}^2 L}{2} > 0$, which can be satisfied for $\alpha_{k-1} = \frac{1}{L\sqrt{k}} \in \left(0, \frac{2}{L}\left(\left(1-\frac{d}{n-1}\right)\delta + \frac{d}{n-1}\right)\right)$. Taking sum up to $N$ steps, we have
\begin{align*}
    \mathbb{E} \left[ f(x_{N}) - f(x_{0}) \right]
        &\leq -\sum_{k=1}^N \left( \alpha_{k-1} \left(1 - \frac{d}{n-1} \right) \delta + \alpha_{k-1} \frac{d}{n-1} - \frac{\alpha_{k-1}^2 L}{2} \right) \mathbb{E} \left[ \|\nabla f(x_{k-1}) \|^2 \right] \\
        &\qquad + \sum_{k=1}^N\left( \alpha_{k-1} \left(1 - \frac{d}{n-1} \right) \eta_{k-1} + \frac{\alpha_{k-1}^2 L}{2} \right) \sigma_m^2.
\end{align*}
Rewriting the inequality, we get
\begin{align*}
    \mathbb{E} \left[ \|\nabla f(x_{\tau}) \|^2 \right] \sum_{k=1}^N \left( \alpha_{k-1} \left(1 - \frac{d}{n-1} \right) \delta + \alpha_{k-1} \frac{d}{n-1} - \frac{\alpha_{k-1}^2 L}{2} \right) \\
    \leq \sum_{k=1}^N\left( \alpha_{k-1} \left(1 - \frac{d}{n-1} \right) \eta_{k-1} + \frac{\alpha_{k-1}^2 L}{2} \right) \sigma_m^2 + (f(x_0) - f*),
\end{align*}
where $\tau = \arg \min_k \mathbb{E} \left[ \| \nabla f(x_k) \| \right]$ is the index that gives the minimum gradient. All that remains is to check the order of $\sum_{k=1}^N \alpha_{k-1} \eta_{k-1}$. Similar to the lower bound for $r_s$ which we obtained previously, we can use the fact that $x^{\beta}$ is an increasing function to obtain an upper bound for $r_s$, giving us
\begin{equation}\label{eqn:ri_uorder}
    r_s \leq \frac{(s+1)^{\beta + 1} - 1}{\beta + 1}.
\end{equation}
Setting the upper-bound to be equal to $N$, we find that $s$ is equal to
\begin{equation}\label{eqn:mb_num_ref}
    \frac{(s+1)^{\beta + 1} - 1}{\beta + 1} = N \implies s = [(\beta+1)N + 1]^{\frac{1}{\beta+1}} -1.
\end{equation}
For $\eta_k$ where $r_i < k < r_{i+1}$, we have from Equation \eqref{rec:etak_mb}
\begin{align*}
    \eta_k &\leq 3\alpha_{k-1} L  + \eta_{k-1},
\end{align*}
which gives us the upper bound
\begin{align*}
    \eta_k \leq 3L \sum_{j=r_i}^{k-1} \alpha_j + \eta_{r_i}.
\end{align*}
Since we have $\eta_{r_i}=0$ from Equation \eqref{eqn:mb_eta0}, we find that $\eta_k \leq 3 \sum_{j={r_i}}^{k-1} (j+1)^{-1/2} \leq 6(\sqrt{k} - \sqrt{r_i+1}) + 3r_i^{-1/2}$.
Consequently, we have
\begin{equation*}
    \sum_{k=r_i}^{r_{i+1}} \eta_k 
        \leq 6 \left[\sum_{k=r_i+1}^{r_{i+1}} (\sqrt{k} - \sqrt{r_i+1})\right] + 3 \left(\frac{r_{i+1}-r_i}{\sqrt{r_i}}\right).
\end{equation*}
For the first sum, we have
\begin{equation*}
    \sum_{k=r_i+1}^{r_{i+1}} \left( \sqrt{k} - \sqrt{r_i+1} \right) \leq (r_{i+1}-r_i) \left( \sqrt{r_{i+1}} - \sqrt{r_i} \right) = (r_{i+1}-r_i) \left( \sqrt{r_i + \epsilon_i} - \sqrt{r_i} \right).
\end{equation*}
Using the same approximation for the second component as before, $\sqrt{r_i + \epsilon_i} - \sqrt{r_i} = \sqrt{r_i} \left( \frac{\epsilon_i}{2r_i} + o\left(\frac{\epsilon_i}{r_i}\right) \right) \leq \frac{\epsilon_i}{\sqrt{r_i}}$ for sufficiently large $i$, we have
\begin{equation*}
    \sum_{k=r_i}^{r_{i+1}} \eta_k \leq 6\sum_{k=r_i+1}^{r_{i+1}} (\sqrt{k-1} - \sqrt{r_i}) + 3\frac{r_{i+1}-r_i}{\sqrt{r_i}} \leq \frac{6\epsilon_i^2 + 3\epsilon_i}{\sqrt{r_i}}.
\end{equation*}
Consequently, the variance contribution is approximately
\begin{equation}\label{eqn:mb_var_sum_2}
    \sum_{k=k_0}^N \eta_k \alpha_k 
        = \sum_{i=i^\ast}^s \sum_{j=r_i}^{r_{i+1}-1}\alpha_{j} \eta_j 
        \leq \sum_{i=i^\ast}^s \alpha_{r_i}\sum_{j=r_i}^{r_{i+1}-1} \eta_j 
        \leq \sum_{i=i^\ast}^s \left(\frac{1}{L\sqrt{r_i+1}}\right) \frac{6\epsilon_i^2 + 3\epsilon_i}{\sqrt{r_i}} 
        \leq \frac{1}{L} \sum_{i=i^\ast}^s \frac{6\epsilon_i^2 + 3\epsilon_i}{r_i}.
\end{equation}
Once again, using the fact that $\epsilon_i = i^\beta$ and a lower bound $r_i \geq \frac{i^{\beta+1}}{\beta+1}$, the above has an upper bound of
\begin{equation*}
     \sum_{k=k_0}^N \eta_k \alpha_k \leq \frac{\beta+1}{L} \sum_{i=1}^s \frac{6i^{2\beta} + 3i^{\beta}}{i^{\beta+1}} = \frac{\beta+1}{L} \sum_{i=i^\ast}^{s} \left(6i^{\beta - 1} + 3i^{-1}\right).
\end{equation*}
The individual sums in the equation can be similarly bounded by integrals, giving us
\begin{align*}
    \sum_{i=i^\ast}^s i^{\beta-1} &\leq \frac{s^{\beta} - (i^\ast - 1)^\beta}{\beta} \\
    \sum_{i=i^\ast}^s i^{-1} &\leq \ln(s) - \ln(i^\ast - 1).
\end{align*}
Following that, we have 
\begin{align*}
    \sum_{k=1}^N \alpha_k \eta_k 
        &\leq \frac{\beta+1}{L} \left( \frac{6}{\beta} \left(s^{\beta} - (i^\ast - 1)^\beta\right) + 3\ln \frac{s}{i^\ast - 1} \right) \\
        &\lesssim \frac{\beta+1}{L} \left( \frac{6}{\beta}(\beta+1)^{\frac{\beta}{1+\beta}} \left( N^{\frac{\beta}{1+\beta}} - k_0^{\frac{\beta}{1 + \beta}}\right) + \frac{3}{\beta+1} \ln \left( \frac{N}{k_0} \right) \right)\\
        &= \frac{1}{L} \left( 6\beta^{-1}(\beta+1)^{\frac{\beta-1}{1+\beta}} \left( N^{\frac{\beta}{1+\beta}} - k_0^{\frac{\beta}{1 + \beta}}\right) + 3\ln((\beta+1)N/k_0)\right).
\end{align*}
where we used the approximation $s \approx \left[(\beta+1)N\right]^{\frac{1}{\beta+1}}$ derived from Equation \eqref{eqn:mb_num_ref}. For the factor with the gradient norm, we have
\begin{align*}
    \sum_{k=k_0}^N \left( \alpha_{k-1} \left(1 - \frac{d}{n-1} \right) \delta + \alpha_{k-1} \frac{d}{n-1} - \frac{\alpha_{k-1}^2 L}{2} \right) 
    &= \sum_{k=k_0}^N \frac{ \left(1 - \frac{d}{n-1} \right) \delta + \frac{d}{n-1} - \frac{1}{2\sqrt{k}} }{L\sqrt{k}} \\
    &\approx \frac{1}{L} (\sqrt{N} - \sqrt{k_0}).
\end{align*}
This means we get
\begin{align*}
    \mathbb{E} \left[ \|\nabla f(x_{\tau}) \|^2 \right] (\sqrt{N} - \sqrt{k_0}) 
        &\lesssim \beta^{-1}(\beta+1)^{\frac{\beta-1}{1+\beta}} \left( N^{\frac{\beta}{1+\beta}} - k_0^{\frac{\beta}{1 + \beta}}\right) \sigma_m^2 + (f(x_0) - f*) \\
    \mathbb{E} \left[ \|\nabla f(x_{\tau}) \|^2 \right] 
        &\lesssim \frac{\beta^{-1}(\beta+1)^{\frac{\beta-1}{1+\beta}} \left( N^{\frac{\beta}{1+\beta}} - k_0^{\frac{\beta}{1 + \beta}}\right) \sigma_m^2 + (f(x_0) - f*)}{\sqrt{N} - \sqrt{k_0}},
\end{align*}
where the leading term is $\frac{N^{\frac{\beta}{1+\beta}} - k_0^{\frac{\beta}{1 + \beta}}}{\sqrt{N} - \sqrt{k_0}}$. It suffices that this term is $\leq \epsilon^2$ to ensure that the LHS is $\leq \epsilon^2$, giving us the condition
\begin{equation}
    \frac{N^{\frac{\beta}{1+\beta}} - k_0^{\frac{\beta}{1 + \beta}}}{\sqrt{N} - \sqrt{k_0}} \leq \epsilon^2 \iff \epsilon^{-2} \left( 1- \left(\frac{k_0}{N}\right)^{\frac{\beta}{1 + \beta}} \right) + \frac{\sqrt{k_0}}{N^{\frac{\beta}{1+\beta}}} \leq N^{\frac{1-\beta}{2(1+\beta)}},
\end{equation}
which simplifies to
\begin{equation}\label{eqn:mb_iteration}
    N \geq \left[ \epsilon^{-2} \left( 1- \left(\frac{k_0}{N}\right)^{\frac{\beta}{1 + \beta}} \right) + \frac{\sqrt{k_0}}{N^{\frac{\beta}{1+\beta}}} \right]^{\frac{2(1+\beta)}{1-\beta}} \gtrsim O\left(\epsilon^{-4\frac{1+\beta}{1-\beta}}\right),
\end{equation}
where we hide some constant and smaller order terms in $\gtrsim$ notation.

\subsection{Proof for Computational Cost in Minibatch Setting (Theorem \ref{thm:mb_cost})}
From Equation \eqref{eqn:mb_num_ref}, we have the number of times we need to update the alignment vector to be $s = N^{\frac{1}{\beta + 1}}$ for a choice of $\beta \in (0,1)$. Suppose the cost of computing an alignment vector is $O(\nu)$ and the cost of a directional derivative is $O(\xi)$. During the update step, the cost is $O(\nu + nd)$ while during a normal iteration it is $O(nd + md\xi)$. This gives us
\begin{align*}
    N \times (nd + md\xi) + s \times \nu 
        &=  N \times (nd + md\xi) + N^{\frac{1}{\beta + 1}} \times \nu \\
        &=  N^{\frac{1}{\beta+1}} \left( N^{\frac{\beta}{\beta+1}} (nd + md\xi) + \nu \right)\\
        &= O\left( \epsilon^{\frac{-4}{1-\beta}} \left( \epsilon^{\frac{-4\beta}{1-\beta}} (nd + md\xi) + \nu \right)\right).
\end{align*}

\section{Proof for the Local Regime (Theorem \ref{thm:local-norefresh})}
This section is dedicated to show, in detail, the analysis for the persistence of memory in the local regime. We will start off with defining the local regime setup, then show some lemmas which hold in the local regime. Finally, the 2-stage proof will be presented in the last part.

\subsection{Setting and Algorithm}
Let $f:\R^n\to\R$ be $C^3$ and let $x^\ast$ be its unique minimizer. Define
\begin{equation}\label{eq:opt}
    H = \nabla^2 f(x^\ast), \qquad \nabla f(x^\ast) = 0.
\end{equation}
By assumption of $L$-Lipschtiz and $\mu$-strongly convex, we have $\forall x \in \mathbb{R}^n$,
\begin{equation}\label{eq:sc-smooth}
    \mu I \preceq \nabla^2 f(x)\preceq L I.
\end{equation}
We also assume that the Hessian is locally Lipschitz near the unique optimal minimiser.
\begin{assump}[Locally Lipschitz Hessian]
    Assume $\exists L_H, r_0 > 0$ such that for all $x, y$ with $\norm{x-x^\ast},\norm{y-x^\ast}\le r_0$,
    \begin{equation}\label{eq:hess-lip}
        \norm{\nabla^2 f(x)-\nabla^2 f(y)}\le L_H\norm{x-y}.
    \end{equation}
\end{assump}

We define the notations below.
\begin{equation*}
    e_k:=x_k-x^\ast,\qquad g_k:=\nabla f(x_k),\qquad H:=\nabla^2 f(x^\ast).
\end{equation*}
Assume $H$ has distinct eigenvalues
\begin{equation*}
    0<\lambda_1<\lambda_2<\cdots<\lambda_n,
\end{equation*}
with orthonormal eigenvectors $(u_i)_{i=1}^n$, so $Hu_i=\lambda_i u_i$. Define
\begin{equation*}
    M:=I-\alpha H,\qquad \mu_i:=1-\alpha\lambda_i,
\end{equation*}
so that
\begin{equation}\label{eq:muorder}
    0<\mu_n<\cdots<\mu_2<\mu_1<1.
\end{equation}
Define the projection error
\begin{equation}\label{eq:Edef}
    E_k:=(I-P_kP_k^\top)g_k.
\end{equation}
This term will determine how far our iterates diverge from the standard gradient descent. The main bulk of the proof is dedicated to controlling the deviations induced by this term.

For a fixed tolerance $\varepsilon>0$, let
\begin{equation*}
    K_\varepsilon:=\min\{k\ge 0:\norm{g_k}\le \varepsilon\}.
\end{equation*}
All the events below will be enforced only for $k<K_\varepsilon$; this allows a union bound over finitely many steps.

\subsection{Build-up to Main Theorem}
We will introduce various lemmas that allow us to control the errors of the algorithm through Taylor's Expansion. We first show that when sufficiently close to the optimal point, the gradient is close to $He$, where $e = x - x^\ast$.

\begin{lem}[Local gradient expansion]\label{lem:taylor}
    There exist constants $C_1>0$ such that for all $e$ with $\norm{e}\le r_0$,
    \begin{equation}\label{eq:taylor}
        \nabla f(x^\ast+e) = He + r(e),
    \end{equation}
    where the remainder satisfies
    \begin{equation}\label{eq:rbound}
        \norm{r(e)}\le C_1\norm{e}^2.
    \end{equation}
    One may take $C_1= L_H / 2$.
\end{lem}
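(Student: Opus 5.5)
The plan is to use Taylor's theorem with integral remainder applied to the gradient map $\nabla f$ along the segment from $x^\ast$ to $x^\ast+e$. Since $f\in C^3$, $\nabla f$ is $C^2$, so for $\norm{e}\le r_0$ the fundamental theorem of calculus gives
\begin{equation*}
    \nabla f(x^\ast+e)-\nabla f(x^\ast)=\int_0^1 \nabla^2 f(x^\ast+te)\,e\,dt .
\end{equation*}
By \eqref{eq:opt}, $\nabla f(x^\ast)=0$. We then add and subtract $He=\int_0^1 H e\,dt$ and define
\begin{equation*}
    r(e):=\int_0^1 \bigl(\nabla^2 f(x^\ast+te)-\nabla^2 f(x^\ast)\bigr)e\,dt .
\end{equation*}
With this definition, \eqref{eq:taylor} holds identically.

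To bound the remainder, we note that every point $x^\ast+te$ with $t\in[0,1]$ satisfies $\norm{te}\le r_0$. Hence the locally Lipschitz Hessian assumption \eqref{eq:hess-lip} applies with $x=x^\ast+te$ and $y=x^\ast$. It gives $\norm{\nabla^2 f(x^\ast+te)-H}\le L_H t\norm{e}$. Moving the norm inside the integral, we obtain
\begin{equation*}
    \norm{r(e)}\le \int_0^1 L_H t\norm{e}^2\,dt=\frac{L_H}{2}\norm{e}^2 .
\end{equation*}
This is \eqref{eq:rbound} with $C_1=L_H/2$.

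There is no real obstacle here. The only points to check are that the whole segment lies inside the ball of radius $r_0$, which holds because the ball is convex and contains both endpoints, and that the integral representation is justified. The latter only requires $\nabla f$ to be $C^1$, which follows from $f\in C^3$ (indeed $C^2$ would suffice).
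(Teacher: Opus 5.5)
Your proposal is correct and follows the paper's proof essentially verbatim: the fundamental theorem of calculus along the segment from $x^\ast$ to $x^\ast+e$, subtracting $He$ to define $r(e)$ as the integral remainder, and bounding it with the local Hessian Lipschitz condition to get $C_1 = L_H/2$. Your explicit checks that the segment stays in the radius-$r_0$ ball and that only $C^2$ regularity is needed are correct, and the paper leaves them implicit.
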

\begin{proof}
    For $e$ such that $\norm{e} < r_0$, the fundamental theorem of calculus gives
    \begin{equation*}
        \nabla f(x^\ast+e)-\nabla f(x^\ast)=\int_{0}^{1}\nabla^2 f(x^\ast+te)\,e\,dt.
    \end{equation*}
    Since $\nabla f(x^\ast)=0$, add and subtract $He$ on both sides to get
    \begin{equation*}
        \nabla f(x^\ast+e)=He+\int_{0}^{1}\big(\nabla^2 f(x^\ast+te)-H\big)e\,dt.
    \end{equation*}
    Define $r(e)$ as the integral term. By \eqref{eq:hess-lip}, we have
    \begin{align*}
        \norm{r(e)}
            &\le \int_0^1 \norm{\nabla^2 f(x^\ast+te)-H}\,\norm{e}\,dt \label{eq:taylor-proof3}\\
            &\le \int_0^1 L_H\,t\,\norm{e}\,\norm{e}\,dt \\
            &=\frac{L_H}{2}\norm{e}^2.
    \end{align*}
\end{proof}

 The lemma above allows us to characterise the relationship between consecutive gradients in terms of an iteration with some additional error terms. 
 
 Denote
 \[
    g_k = \nabla f(x_k),
 \]
 then the follow gives us the main recursion satisfied by subsequent gradients.
\begin{lem}[Modified gradient recursion]\label{lem:gradrec}
    Assume $\norm{e_k}\le r_0$ and $\norm{e_{k+1}}\le r_0$. Then there exists $C_2 > 0$ such that
    \begin{equation}\label{eq:gradrec}
        g_{k+1} = M g_k+\alpha H E_k + r_k,
    \end{equation}
    where 
    \begin{equation}
        r_k = r(e_{k+1})-r(e_k)
    \end{equation}
    satisfies $\norm{r_k} \leq C_2 \norm{g_k}^2$. One valid choice for $C_2 = 5C_1/\mu^2$.
\end{lem}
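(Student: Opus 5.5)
The plan is to write the update as an exact identity and then push it through the Taylor expansion of Lemma \ref{lem:taylor}. By \eqref{eqn:update_step}, $e_{k+1} = e_k - \alpha P_kP_k^\top g_k$. Using \eqref{eq:Edef}, $P_kP_k^\top g_k = g_k - E_k$, so
\[
e_{k+1} = e_k - \alpha g_k + \alpha E_k .
\]
Both $e_k$ and $e_{k+1}$ lie in the ball of radius $r_0$, so Lemma \ref{lem:taylor} applies at each of them and gives $g_{k+1} = He_{k+1} + r(e_{k+1})$ and $g_k = He_k + r(e_k)$. Substituting the expression for $e_{k+1}$ yields
\[
g_{k+1} = He_k - \alpha H g_k + \alpha H E_k + r(e_{k+1}).
\]
Next we replace $He_k$ by $g_k - r(e_k)$. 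This gives
\[
g_{k+1} = (I-\alpha H)g_k + \alpha H E_k + r(e_{k+1}) - r(e_k),
\]
which is exactly \eqref{eq:gradrec} with $r_k = r(e_{k+1})-r(e_k)$. This part is purely algebraic.

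The remaining work is the bound $\norm{r_k}\le C_2\norm{g_k}^2$. By \eqref{eq:rbound}, $\norm{r_k}\le C_1(\norm{e_{k+1}}^2+\norm{e_k}^2)$, so it suffices to bound both error norms by $\norm{g_k}$.

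For $e_k$, strong convexity \eqref{eq:sc-smooth} gives $\norm{g_k} = \norm{\nabla f(x_k)-\nabla f(x^\ast)}\ge \mu\norm{e_k}$, hence $\norm{e_k}\le \norm{g_k}/\mu$.

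For $e_{k+1}$, we first note that $P_kP_k^\top$ is an orthogonal projection, so $\norm{P_kP_k^\top g_k}\le\norm{g_k}$. The update then gives $\norm{e_{k+1}}\le \norm{e_k}+\alpha\norm{g_k}\le (1/\mu+\alpha)\norm{g_k}$. Using $\alpha<1/\lambda_n\le 1/\mu$, this becomes $\norm{e_{k+1}}\le 2\norm{g_k}/\mu$. Combining the two bounds gives
\[
\norm{r_k}\le C_1(1+4)\norm{g_k}^2/\mu^2 = 5C_1\norm{g_k}^2/\mu^2 ,
\]
which matches the stated choice $C_2 = 5C_1/\mu^2$.

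The main point needing care is the step-size bound used for $e_{k+1}$. The argument needs $\alpha\le 1/\mu$, and this follows from the standing choice $\alpha\in(0,1/\lambda_n)$ because $\mu\le\lambda_1\le\lambda_n$. I would state this inequality explicitly in the proof. I would also note that Lemma \ref{lem:taylor} must be applied at both points, so the proof relies on the hypotheses $\norm{e_k},\norm{e_{k+1}}\le r_0$ given in the statement.
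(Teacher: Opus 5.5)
Your proposal is correct and follows essentially the same route as the paper: apply Lemma~\ref{lem:taylor} at both $e_k$ and $e_{k+1}$, use $e_{k+1}=e_k-\alpha g_k+\alpha E_k$ to obtain the exact identity (your direct substitution $He_k=g_k-r(e_k)$ is a slightly shorter version of the paper's algebra), and then bound $\norm{r_k}\le C_1(\norm{e_{k+1}}^2+\norm{e_k}^2)$ with $\norm{e_k}\le\norm{g_k}/\mu$. The only difference is how you bound $\norm{e_{k+1}}$: the paper uses $\norm{g_k}\le L\norm{e_k}$ and $\alpha<1/L$ to get $\norm{e_{k+1}}\le 2\norm{e_k}$, whereas you only need $\alpha\le 1/\mu$, which does follow from the theorem's standing assumption $\alpha\in(0,1/\lambda_n)$ since $\mu\le\lambda_1\le\lambda_n\le L$ (the paper's $\alpha<1/L$ does not follow from it).
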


\begin{proof}
    By Lemma~\ref{lem:taylor}, at $x_k=x^\ast + e_k$, we find that
    \begin{equation*}
        g_k=H e_k+r(e_k).
    \end{equation*}
    From the update \eqref{eqn:update_step} and $E_k=(I-P_kP_k^\top)g_k$,
    \begin{align*}
        e_{k+1}
            &= e_k + (x_{k+1} - x_k) \\
            &= e_k - \alpha P_k P_k^\top g_k \\
            &= e_k - \alpha g_k + \alpha E_k. 
    \end{align*}
    Substituting the first equation into the above equation, we get
    \begin{equation}\label{eq:ek3}
        e_{k+1} = (I-\alpha H)e_k - \alpha r(e_k) + \alpha E_k.
    \end{equation}
    Apply Lemma~\ref{lem:taylor} at $x_{k+1}=x^\ast+e_{k+1}$
    \begin{equation*}
        g_{k+1}=H e_{k+1}+r(e_{k+1}).
    \end{equation*}
    Substitute \eqref{eq:ek3} into the RHS to get
    \begin{equation}\label{eq:gk1-sub}
        g_{k+1}=H(I-\alpha H)e_k-\alpha H r(e_k)+\alpha H E_k+r(e_{k+1}).
    \end{equation}
    On the other hand,
    \begin{align*}
        Mg_k
            &= (I-\alpha H)(H e_k+r(e_k)) \\
            &= (I-\alpha H)H e_k+(I-\alpha H)r(e_k) \\
            &= H(I-\alpha H) e_k+(I-\alpha H)r(e_k),
    \end{align*}
    since $H(I-\alpha H)=(I-\alpha H)H$. Then, combining both equations, we get
    \begin{align*}
        g_{k+1}
            &= (Mg_k - r(e_k))+\alpha H E_k+r(e_{k+1}) \\
            &= Mg_k + \alpha H E_k + r_k.
    \end{align*}
    For the size of $r_k$, by Lemma~\ref{lem:taylor} and the triangle inequality,
    \begin{equation*}
        \norm{r_k} \le \norm{r(e_{k+1})}+\norm{r(e_k)}
            \le C_1\norm{e_{k+1}}^2+C_1\norm{e_k}^2.
    \end{equation*}
    From the update $e_{k+1}=e_k-\alpha P_k P_k^\top g_k$ and $\|P_k P_k^\top\|_{\op}=1$,
    \begin{equation*}
        \norm{e_{k+1}}\le \norm{e_k}+\alpha\norm{g_k}.
    \end{equation*}
    By strong convexity and $\nabla f(x^\ast)=0$, we have $\mu\|e_k\|\le \|g_k\|$, hence
    \begin{equation*}
        \norm{e_k}\le \frac{1}{\mu}\norm{g_k}.
    \end{equation*}
    Also $\|g_k\|\le L\|e_k\|$ by Lipschitz continuity of the gradient, so we get
    $\|e_{k+1}\|\le (1+\alpha L)\|e_k\|\le 2\|e_k\|$ as $\alpha < 1/L$.
    Therefore
    \begin{equation*}
    \norm{r_k} \le C_1(4\|e_k\|^2+\|e_k\|^2)=5C_1\|e_k\|^2
    \le \frac{5C_1}{\mu^2}\|g_k\|^2.
    \end{equation*}
\end{proof}

Before we begin, we will first pay off the debt we had above by proving Proposition \ref{prop:sum_angle}.
\begin{proof}[Proof for Proposition \ref{prop:sum_angle}]
Let
\[
\alpha = \langle u,v\rangle, \quad \beta = \langle v,w\rangle, \quad x = \langle u,w\rangle.
\]
Since $u,v,w$ are unit vectors, the Gram matrix
\[
G = \begin{pmatrix}
    u^\top \\ v^\top \\ w^\top
\end{pmatrix} \begin{pmatrix}
    u & v & w
\end{pmatrix}
=
\begin{pmatrix}
1 & \alpha & x \\
\alpha & 1 & \beta \\
x & \beta & 1
\end{pmatrix}
\]
is positive semidefinite. Hence $\det(G) \ge 0$, which gives
\[
1 + 2\alpha \beta x - \alpha^2 - \beta^2 - x^2 \ge 0.
\]
Rewriting,
\[
x^2 - 2\alpha \beta x + (\alpha^2 + \beta^2 - 1) \le 0.
\]
Viewing this as a quadratic inequality in $x$, the roots are
\[
x = \alpha \beta \pm \sqrt{(1-\alpha^2)(1-\beta^2)},
\]
which means
\[
    |x| \geq |\alpha \beta| - \sqrt{(1-\alpha^2)(1-\beta^2)}.
\]
Using $\alpha^2 \ge \delta_1$ and $\beta^2 \ge {\delta_2}$ (which minimizes the lower bound), we get
\[
    |x| \ge \sqrt{\delta_1\delta_2} - \sqrt{(1-\delta_1)(1-\delta_2)}.
\]
If $\delta_1 + \delta_2 \ge 1$, then
\[
\sqrt{\delta_1\delta_2} \ge \sqrt{(1-\delta_1)(1-\delta_2)},
\]
so the right-hand side is non-negative and we may square both sides to obtain the desired result.
\end{proof}

Next, for the convergence to hold true, we require that the gradient decays exponentially, which we show below.
\begin{lem}[Exponential Decay of Gradient with High Probability]\label{lem:exp_decay}
    The gradient of the iterates decrease at a rate of
    \begin{equation}
        \norm*{g_k} \leq C\rho_{decay}^{k}
    \end{equation}
    for some constant $C >0$ with probability at least  $1 - e^{- \frac{1}{2} \sigma^2k(1 - p_{decay}(\tau, \delta, d))}$, where 
    \begin{align}
        p_{decay}(\tau, \delta, d) &= 2\exp{(-cd\tau^2)} + p_v(\delta) \\
        \rho_{decay} &= \left[1 - 2\mu\alpha \left( 1 - \frac{\alpha L}{2} \right) \left(\delta+\frac{d}{n-1}(1+\tau)\right) \right]^{(1-\sigma)(1 - p_{decay}(\tau, \delta, d))},
    \end{align}
    for some $\sigma \in (0,1)$ and $\alpha \in (0, 2/L)$.
\end{lem}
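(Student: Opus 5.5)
The plan is to show a one-step contraction of $f(x_k)-f^\ast$ on a good event, count how many steps are good, and then convert the resulting function-value bound into a gradient bound.

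\textbf{Good events and one-step contraction.} Let $\mathcal{G}_k$ be the intersection of two events: the alignment event $\mathcal{K}_k(\delta)$ from Assumption~\ref{assmp:align_prob}, and the concentration event from Lemma~\ref{lem:Pk_highprob} (equivalently, the Grassmannian concentration proposition applied on $\hat v_k^\perp\simeq\mathbb{R}^{n-1}$). The second event states that
\[
\norm*{\tilde P_k^\top g_k}^2 \geq (1-\tau)\tfrac{d}{n-1}\norm*{\Proj_{\hat v_k^\perp} g_k}^2 .
\]
Conditionally on $\mathcal{F}_k$, the event $\mathcal{G}_k$ fails with probability at most $2e^{-cd\tau^2}+p_v(\delta)=p_{decay}$.

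On $\mathcal{G}_k$, I would apply \eqref{eqn:lip_ineq} exactly as in \eqref{eqn:convergence_initial_eqn}, using that $P_kP_k^\top$ is a projection. This gives
\[
f(x_{k+1})-f^\ast \leq f(x_k)-f^\ast-\alpha\Bigl(1-\tfrac{\alpha L}{2}\Bigr)\norm*{P_k^\top g_k}^2 .
\]
The projected gradient norm splits as
\[
\norm*{P_k^\top g_k}^2=\inner{\hat v_k,g_k}^2+\norm*{\tilde P_k^\top g_k}^2 .
\]
On $\mathcal{G}_k$ this is at least $\bigl(\delta+\tfrac{d}{n-1}(1\pm\tau)(1-\delta)\bigr)\norm*{g_k}^2$, which I will bound below by a constant $\theta\norm*{g_k}^2$ of the form appearing in $\rho_{decay}$. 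The exact constant is bookkeeping. Next I use the PL inequality $\norm*{g_k}^2\geq 2\mu(f(x_k)-f^\ast)$, which holds by strong convexity. Combining, on $\mathcal{G}_k$,
\[
f(x_{k+1})-f^\ast\leq q\,(f(x_k)-f^\ast), \qquad q=1-2\mu\alpha\Bigl(1-\tfrac{\alpha L}{2}\Bigr)\theta .
\]
On $\mathcal{G}_k^c$ I use only $f(x_{k+1})\leq f(x_k)$, which holds deterministically because $\alpha<2/L$ and $P_kP_k^\top$ is a projection.

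\textbf{Counting good steps.} Iterating, $f(x_k)-f^\ast\leq q^{S_k}(f(x_0)-f^\ast)$, where $S_k=\sum_{j<k}\mathbf 1_{\mathcal{G}_j}$. The indicators are not independent. However, each has conditional success probability at least $1-p_{decay}$ given the past, so $S_k$ stochastically dominates a Binomial$(k,1-p_{decay})$ variable. I would justify this by a standard coupling, or by a martingale/Azuma argument on $S_k-\sum_j\mathbb{P}[\mathcal{G}_j\mid\mathcal{F}_j]$. A multiplicative Chernoff lower tail then gives
\[
\mathbb{P}\bigl[S_k<(1-\sigma)k(1-p_{decay})\bigr]\leq e^{-\frac12\sigma^2k(1-p_{decay})} .
\]
This matches the stated probability and the exponent $(1-\sigma)(1-p_{decay})$ in $\rho_{decay}$, the latter being read as applying to the per-step factor $q$ raised to the $k$-th power.

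\textbf{Back to gradients.} By $L$-smoothness, $\norm*{g_k}^2\leq 2L(f(x_k)-f^\ast)$. Hence on the complement of the bad event,
\[
\norm*{g_k}^2\leq 2L(f(x_0)-f^\ast)\,q^{(1-\sigma)(1-p_{decay})k} .
\]
Taking square roots gives $\norm*{g_k}\leq C\rho_{decay}^{k}$, with $C^2=2L(f(x_0)-f^\ast)$, up to the harmless square-root-versus-square discrepancy in how the exponent is absorbed into $\rho_{decay}$.

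\textbf{Main obstacle.} I expect the hardest part to be the dependence structure. First, $\hat v_k$ may depend on $g_k$, and $\tilde P_k$ is drawn on $\hat v_k^\perp$, so the concentration lemma must be applied conditionally on $(\mathcal{F}_k,\hat v_k)$. Second, the Chernoff bound requires the supermartingale/coupling step rather than plain independence. I would handle both by writing every probability conditionally on $\sigma(\mathcal{F}_k,\hat v_k)$ and using the fact that the conditional failure bound is uniform.
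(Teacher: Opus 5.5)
Your proposal is sound and follows the paper's proof essentially step for step: a one-step descent bound on the intersection of the alignment and concentration events, then the PL inequality, then a Chernoff count of good steps, then $\|g_k\|^2\le 2L(f(x_k)-f^\ast)$. It is in fact more careful than the paper, which applies Chernoff to the dependent indicators $Y_i$ without the conditional-domination or supermartingale argument you supply.

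The two discrepancies you flag are real, and the paper's proof shares them. First, the valid per-step constant is your $\delta+\frac{d}{n-1}(1-\tau)(1-\delta)$; the stated $\delta+\frac{d}{n-1}(1+\tau)$ cannot be reached by lower-bounding, so this is more than bookkeeping. Second, the argument yields only $\|g_k\|\le C\rho_{decay}^{k/2}$. The lemma therefore holds only after redefining $\rho_{decay}$ to account for both.
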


\begin{proof}
    From the same workings for the convergence, we have that conditioned on the past,
    \begin{equation*}
        f(x_{k-1}) - f(x_{k})
            \geq \alpha \left( 1 - \frac{\alpha L}{2} \right) \norm*{P_{k-1}^\top g_{k-1}}^2,
    \end{equation*}
    which implies that $f(x_{k-1}) \leq f(x)$ necessarily. From the same equation, we also get
    \begin{align*}
        f(x_{k-1}) - f(x_{k})
            &\geq \alpha \left( 1 - \frac{\alpha L}{2} \right) \left(\delta+\frac{d}{n-1}(1+\tau)\right)\norm*{g_{k-1}}^2
    \end{align*}
    with probability at least $1-2\exp{(-cd\tau^2)} - p_v(\delta) := 1 - p_{decay}(\tau, \delta, d)$ by Lemma \ref{lem:Pk_highprob}. Let $Y_{k-1}$ be the indicator event of the inequality above, then we have that $\mathbb{E}[Y_{k-1}] \geq 1 - p_{decay}(\tau, \delta, d)$. Furthermore, since the step size is chosen such that the RHS is positive, we have
    \begin{equation*}
        Y_{k-1} \norm*{g_{k-1}}^2 \leq \frac{f(x_{k-1}) - f(x_{k})}{\alpha \left( 1 - \frac{\alpha L}{2} \right) \left(\delta+\frac{d}{n-1}(1+\tau)\right)}.
    \end{equation*}
    Hence, we get
    \begin{equation*}
        \left( \min_{i \in [0, k-1]}  \norm*{g_i}^2\right) \frac{1}{k} \sum_{i=0}^{k-1} Y_i\leq \frac{f(x_{0}) - f(x_{k})}{k\alpha \left( 1 - \frac{\alpha L}{2} \right) \left(\delta+\frac{d}{n-1}(1+\tau)\right)}.
    \end{equation*}
    We have by a Chernoff bound (see \cite{vershynin2018highdimensional}), that for all $\sigma \in (0,1)$,
    \begin{equation}\label{eqn:decay_total_prob}
        \mathbb{P} \left[ \sum_{i=1}^{k-1} Y_i \geq (1-\sigma)(1 - p_{decay}(\tau, \delta, d))k \right] \geq 1 - \exp{\left( -\frac{\sigma^2}{2}(1 - p_{decay}(\tau, \delta, d))k \right)}.
    \end{equation}
    Substituting the PL-inequality, we obtain
    \begin{align*}
        f(x_k) - f(x^\ast) 
            &= f(x_k) - f(x_{k-1}) + f(x_{k-1}) - f(x^\ast) \\
            &\leq -\alpha \left( 1 - \frac{\alpha L}{2} \right) \left(\delta+\frac{d}{n-1}(1+\tau)\right)\norm*{g_{k-1}}^2Y_{k-1} + (f(x_{k-1}) - f(x^\ast)) \\
            &\leq \left[1 - 2\mu\alpha \left( 1 - \frac{\alpha L}{2} \right) \left(\delta+\frac{d}{n-1}(1+\tau)\right) {Y_{k-1}}\right](f(x_{k-1}) - f(x^\ast)) \\
            &= \left[1 - 2\mu\alpha \left( 1 - \frac{\alpha L}{2} \right) \left(\delta+\frac{d}{n-1}(1+\tau)\right) \right]^{Y_{k-1}} (f(x_{k-1}) - f(x^\ast)).
    \end{align*}
    Consequently, we obtain
    \begin{equation}
        f(x_k) - f(x^\ast)
            \leq \left[1 - 2\mu\alpha \left( 1 - \frac{\alpha L}{2} \right) \left(\delta+\frac{d}{n-1}(1+\tau)\right) \right]^{\sum_{i=0}^{k-1} Y_{i}} (f(x_{0}) - f(x^\ast)).
    \end{equation}
    Using Equation \eqref{eqn:decay_total_prob}, we have that
    \begin{equation}\label{eqn:nonincreasing_1}
        f(x_k) - f(x^\ast)
            \leq \left[1 - 2\mu\alpha \left( 1 - \frac{\alpha L}{2} \right) \left(\delta+\frac{d}{n-1}(1+\tau)\right) \right]^{(1-\sigma)(1 - p_{decay}(\tau, \delta, d))k} (f(x_{0}) - f(x^\ast)).
    \end{equation}
    Let 
    \[
        \rho = \left[1 - 2\mu\alpha \left( 1 - \frac{\alpha L}{2} \right) \left(\delta+\frac{d}{n-1}(1+\tau)\right) \right]^{(1-\sigma)(1 - p_{decay}(\tau, \delta, d))},
    \]
    then we have
    \begin{equation*}
        f(x_k) - f(x^\ast)
            \leq \rho^k (f(x_{0}) - f(x^\ast)).
    \end{equation*}
    From L-smoothness of the function, we have
    \begin{equation*}
        \norm*{g_k}^2 \leq 2L (f(x_k) - f^\ast) \leq 2L(f(x_0) - f(x^\ast))\rho^{k},
    \end{equation*}
    giving the desired equation with constant
    \[
        C = \sqrt{2L(f(x_0) - f(x^\ast))}\
    \]
    holding with probability at least $1 - e^{- \frac{1}{2} \sigma^2k(1 - p_{decay}(\tau, \delta, d))}$.
\end{proof}
\begin{rem}
    For simplicity, we can let $\sigma = 1/2$ such that the result holds with 
    $$\rho_{decay} = \left[1 - 2\mu\alpha \left( 1 - \frac{\alpha L}{2} \right) \left(\delta+\frac{d}{n-1}(1+\tau)\right) \right]^{\frac{1}{2}(1 - p_{decay}(\tau, \delta, d))}$$ with probability at least $1 - e^{-\frac{1}{8}k(1 - p_{decay}(\tau, \delta, d))}$
\end{rem}

\paragraph{Rescaling Matrix for Phase 2.}
For phase 2 of the argument, after $k_1$ has been fixed, we will re-scale the random part of the matrix by a factor of $\sqrt{\frac{n-1}{d}}$. Under this setup, we will obtain a similar decay rate (with a slightly different step size) as in the original matrix. Consider the rescaled version of the random matrix $\tilde P_k$ by a factor of $\sqrt{\frac{n-1}{d}}$, which we denote by $\hat P_k = \sqrt{\frac{n-1}{d}}$. Then, it satisfies the basic properties
\begin{equation}
    \hat P_k^\top \hat P_k = \frac{n-1}{d}I, \qquad \mathbb{E} \left[\hat P_k \hat P_k^\top\right] = I - \hat v_k \hat v_k^\top.
\end{equation}
Then, the matrix $P_k = \begin{pmatrix}\hat v_k & \hat P_k\end{pmatrix}$ has the properties
\begin{equation}
    P_k^\top P_k = \begin{pmatrix}
        1 & 0\\ 0 & \frac{n-1}{d} I_d
    \end{pmatrix}, \qquad \mathbb{E}[P_k P_k^\top] = I_n.
\end{equation}

\begin{cor}\label{cor:rescaled_Pk_highprob}
    As a corollary of Lemma \eqref{lem:Pk_highprob}, we have that
    \begin{equation}
        \mathbb{P} \left[ \abs*{\inner*{\hat P^\top u, \hat P^\top v} -\inner*{u_\perp, v_\perp}} \leq \tau \norm*{u_\perp}\norm*{v_\perp} \right] \geq 1 - 2\exp{(-cd\tau^2)}.
    \end{equation}
    \begin{proof}
        Notice that 
        \begin{align*}
            \abs*{\inner*{\hat P^\top u, \hat P^\top v} - \inner*{u_\perp, v_\perp}} 
            &= \frac{n-1}{d}\abs*{\inner*{\tilde P^\top u, \tilde P^\top v} - \frac{d}{n-1}\inner*{u_\perp, v_\perp}} \\
            &\leq \frac{n-1}{d} \left(\tau \frac{d}{n-1} \norm{u_\perp}\norm{v_\perp}\right) \\
            &= \tau \norm*{u_\perp}\norm*{v_\perp}.
        \end{align*}
    \end{proof}
\end{cor}

\begin{lem}[Decay Rate Under Rescaled $P_k$]\label{lem:decay_rescaled}
    Suppose $P_k = \begin{pmatrix}\hat v_k & \hat P_k\end{pmatrix}$, we have
    \begin{equation*}
        f(x_{k+1}) - f(x_{k}) 
         \leq \left[ -\alpha \tau + \frac{\alpha^2 L}{2} \left(1 - \frac{n-1}{d}(1+\tau)\right) \right] \inner*{\hat v_k, g_k}^2
          + \left[-\alpha (1-\tau) + \frac{\alpha^2 L}{2} \frac{n-1}{d}\right] \norm*{g_k}^2
    \end{equation*}
    with probability at least $1 - 2e^{-cd\tau^2}$. Consequently, if $\inner*{\hat v_k, g_k}^2 \geq \delta \norm*{g_k}^2$, we have
    \begin{equation}
        f(x_{k+1}) - f(x_{k}) 
         \leq \left(\left[ -\alpha \tau + \frac{\alpha^2 L}{2} \left(1 - \frac{n-1}{d}(1+\tau)\right) \right] \delta + \left[-\alpha (1-\tau) + \frac{\alpha^2 L}{2} \frac{n-1}{d}\right] \right)\norm*{g_k}^2.
    \end{equation}
\end{lem}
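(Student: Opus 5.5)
The plan is to start from the $L$-smoothness inequality \eqref{eqn:lip_ineq} with $x_{k+1}-x_k=-\alpha P_kP_k^\top g_k$. This gives
\[
f(x_{k+1})-f(x_k)\le -\alpha\, g_k^\top P_kP_k^\top g_k+\frac{\alpha^2L}{2}\norm*{P_kP_k^\top g_k}^2 .
\]
We then evaluate both terms under the rescaled structure $P_kP_k^\top=\hat v_k\hat v_k^\top+\hat P_k\hat P_k^\top$, where $\hat P_k\perp\hat v_k$ and $\hat P_k^\top\hat P_k=\frac{n-1}{d}I_d$. Write $g_\perp=(I-\hat v_k\hat v_k^\top)g_k$, so that $\norm{g_\perp}^2=\norm{g_k}^2-\inner*{\hat v_k,g_k}^2$.

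For the linear term we have $g_k^\top P_kP_k^\top g_k=\inner*{\hat v_k,g_k}^2+\norm*{\hat P_k^\top g_k}^2$. We apply Corollary \ref{cor:rescaled_Pk_highprob} with $u=v=g_k$, taking the distinguished direction to be $\hat v_k$ (the Haar subspace lives in $\hat v_k^\perp$). On an event of probability at least $1-2e^{-cd\tau^2}$ this gives
\[
(1-\tau)\norm{g_\perp}^2\le\norm*{\hat P_k^\top g_k}^2\le(1+\tau)\norm{g_\perp}^2 .
\]
We use the lower bound in the descent term.

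For the quadratic term, orthogonality of $\hat v_k$ and the columns of $\hat P_k$ gives
\[
\norm*{P_kP_k^\top g_k}^2=\inner*{\hat v_k,g_k}^2+\norm*{\hat P_k\hat P_k^\top g_k}^2 .
\]
Moreover $\norm*{\hat P_k\hat P_k^\top g_k}^2=g_k^\top\hat P_k(\hat P_k^\top\hat P_k)\hat P_k^\top g_k=\frac{n-1}{d}\norm*{\hat P_k^\top g_k}^2$. On the same event this is at most $\frac{n-1}{d}(1+\tau)\norm{g_\perp}^2$.

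Next we substitute and rewrite everything in terms of $\inner*{\hat v_k,g_k}^2$ and $\norm{g_k}^2$ using the expression for $\norm{g_\perp}^2$. The linear part contributes $-\alpha\inner*{\hat v_k,g_k}^2-\alpha(1-\tau)(\norm{g_k}^2-\inner*{\hat v_k,g_k}^2)$, so the coefficient of $\inner*{\hat v_k,g_k}^2$ is $-\alpha\tau$. The quadratic part contributes $\frac{\alpha^2L}{2}\bigl(1-\frac{n-1}{d}(1+\tau)\bigr)\inner*{\hat v_k,g_k}^2+\frac{\alpha^2L}{2}\frac{n-1}{d}(1+\tau)\norm{g_k}^2$.

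This reproduces the stated coefficient of $\inner*{\hat v_k,g_k}^2$. The $\norm{g_k}^2$ coefficient comes out as $-\alpha(1-\tau)+\frac{\alpha^2L}{2}\frac{n-1}{d}(1+\tau)$. The statement has $\frac{n-1}{d}$ without the $(1+\tau)$ factor. I expect the \textbf{main obstacle} is reconciling this bookkeeping with the stated form. One option is to absorb the factor via a slightly different split of the concentration slack. The other is to note that the stated bound should carry $(1+\tau)$, which is harmless for the subsequent step-size choice. I would present the derivation carefully and check which form matches the later step-size formula for $\alpha_{k_1}$.

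For the consequence, assume $\inner*{\hat v_k,g_k}^2\ge\delta\norm{g_k}^2$. We would like to replace $\inner*{\hat v_k,g_k}^2$ by $\delta\norm{g_k}^2$, which requires the coefficient $-\alpha\tau+\frac{\alpha^2L}{2}\bigl(1-\frac{n-1}{d}(1+\tau)\bigr)$ to be nonpositive. This holds automatically, since $\frac{n-1}{d}(1+\tau)>1$ makes the quadratic part negative. A nonpositive coefficient times a quantity at least $\delta\norm{g_k}^2$ is bounded above by the coefficient times $\delta\norm{g_k}^2$, which yields the second display. The probability remains $1-2e^{-cd\tau^2}$, conditional on the past, because $\hat v_k$ and $g_k$ are $\mathcal F_k$-measurable and $\hat P_k$ is fresh.
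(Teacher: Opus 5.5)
Your derivation is the paper's own: the smoothness inequality, the split $P_kP_k^\top=\hat v_k\hat v_k^\top+\hat P_k\hat P_k^\top$, the lower concentration bound from Corollary~\ref{cor:rescaled_Pk_highprob} in the linear term and the upper bound in the quadratic term, then the substitution $\norm{g_\perp}^2=\norm{g_k}^2-\inner*{\hat v_k,g_k}^2$. Your bookkeeping is the correct one. In the paper's proof, the last line of the quadratic-term computation rewrites $\inner*{\hat v_k,g_k}^2+\frac{n-1}{d}(1+\tau)\bigl(\norm{g_k}^2-\inner*{\hat v_k,g_k}^2\bigr)$ as $\frac{n-1}{d}\norm{g_k}^2+\bigl(1-\frac{n-1}{d}(1+\tau)\bigr)\inner*{\hat v_k,g_k}^2$. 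That step silently drops the factor $(1+\tau)$ on $\norm{g_k}^2$, and this slip is the whole source of the discrepancy you found. Your justification of the second display, via nonpositivity of the coefficient of $\inner*{\hat v_k,g_k}^2$, is correct; the paper leaves that step implicit.

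Do not hedge between your two options, because the first one is impossible: the inequality as stated is false. Take $f(x)=\frac L2\norm{x}^2$ with $g_k$ parallel to $\hat v_k$, which is exactly the paper's situation at $k=k_1$, where $v_{k_1}=g_{k_1}$. Then $\hat P_k^\top g_k=0$, so $x_{k+1}=x_k-\alpha g_k$ deterministically. Hence $f(x_{k+1})-f(x_k)=\bigl(-\alpha+\frac{\alpha^2L}{2}\bigr)\norm{g_k}^2$. The stated right-hand side instead equals $\bigl(-\alpha+\frac{\alpha^2L}{2}-\frac{\alpha^2L}{2}\frac{n-1}{d}\tau\bigr)\norm{g_k}^2$, which is strictly smaller for every $\tau>0$. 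The bound therefore fails with probability one, and no redistribution of concentration slack can rescue it. The correct lemma is the one you derived, with $\frac{\alpha^2L}{2}\frac{n-1}{d}(1+\tau)$ in the coefficient of $\norm{g_k}^2$.

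Likewise, do not use the later formula for $\alpha_{k_1}$ as the arbiter. It was obtained by minimising the flawed coefficient, so it matches the flawed form, and the correction is not harmless for it. Redoing that minimisation with $\delta=(2\delta_0-1)^2$ and your coefficients gives
\[
\alpha^\ast=\frac{d}{L}\cdot\frac{1-4\tau\delta_0(1-\delta_0)}{4(n-1)(1+\tau)\delta_0(1-\delta_0)+d(2\delta_0-1)^2},
\]
so the term $-\tau(n-1)$ in the paper's denominator disappears. This is also the sensible answer. As $\delta_0\to1$ it tends to the gradient-descent step $1/L$. The paper's denominator instead tends to $d-\tau(n-1)$, which is negative whenever $d<\tau(n-1)$.
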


\begin{proof}
    By L-smoothness, we have
    \begin{equation}
        f(x_{k+1}) - f(x_{k}) \leq -\alpha \norm*{P_k^\top g_k}^2 + \frac{\alpha^2 L }{2} \norm*{P_k P_k^\top g_k}^2.
    \end{equation}
    Using Corollary \ref{cor:rescaled_Pk_highprob}, we have
    \begin{align*}
        \norm*{P_k^\top g_k}^2 
            &= \inner*{\hat v_k, g_k}^2 + \norm*{\tilde P_k^\top g_k}^2\\
            &\geq \inner*{\hat v_k, g_k}^2 + (1-\tau) \norm*{\Proj_{\hat v_k^\perp}g_k}^2 \\
            &= \inner*{\hat v_k, g_k}^2 + (1-\tau) \left( \norm*{g_k}^2 - \inner*{\hat v_k, g_k}^2\right) \\
            &= \tau \inner*{\hat v_k, g_k}^2 + (1-\tau) \norm*{g_k}^2.
    \end{align*}
    with probability $1 - 2\exp{(-cd\tau^2)}$. For the second term, we find that
    \begin{align*}
        g_k^\top P_k P_k^\top P_k P_k^\top g_k
            &= g_k^\top \left( \hat v_k \hat v_k^\top + \frac{n-1}{d} \tilde P_k \tilde P_k^\top \right) g_k \\
            &= \inner*{\hat v_k, g_k}^2 + \frac{n-1}{d}\norm*{\tilde P_k^\top g_k}^2 \\
            &\leq \inner*{\hat v_k, g_k}^2 + \frac{n-1}{d} (1+\tau) \norm*{\Proj_{\hat v_k^\perp}g_k}^2 \\
            &= \inner*{\hat v_k, g_k}^2 + \frac{n-1}{d} (1+\tau) \left( \norm*{g_k}^2 - \inner*{\hat v_k, g_k}^2\right) \\
            &= \frac{n-1}{d} \norm*{g_k}^2 + \left(1 - \frac{n-1}{d}(1+\tau)\right) \inner*{\hat v_k, g_k}^2
    \end{align*}
    under the same event. Substituting both back into the equation above, we obtain
    \begin{align}
         f(x_{k+1}) - f(x_{k}) 
         &\leq -\alpha \tau \inner*{\hat v_k, g_k}^2 - \alpha (1-\tau) \norm*{g_k}^2 \nonumber\\
         &\qquad + \frac{\alpha^2 L }{2} \left( \frac{n-1}{d} \norm*{g_k}^2 + \left(1 - \frac{n-1}{d}(1+\tau)\right) \inner*{\hat v_k, g_k}^2 \right) \nonumber\\
         &= \left[ -\alpha \tau + \frac{\alpha^2 L}{2} \left(1 - \frac{n-1}{d}(1+\tau)\right) \right] \inner*{\hat v_k, g_k}^2 \nonumber\\
         &\qquad + \left[-\alpha (1-\tau) + \frac{\alpha^2 L}{2} \frac{n-1}{d}\right] \norm*{g_k}^2. \label{eqn:nonincreasing_2}
    \end{align}
\end{proof}

\begin{rem}
    From Equations \eqref{eqn:nonincreasing_1} and \eqref{eqn:nonincreasing_2}, it is clear that with suitable step sizes in either phases, $f(x_{k+1}) \leq f(x_k)$. (The step sizes will be chosen to satisfy this, as is the case for all gradient-descent based methods.) This means that $f(x_k) \leq f(x_0)$, and by strong-convexity,
    \begin{equation}
        \frac{\mu}{2} \norm*{x_k - x^\ast} \leq f(x_k) - f^* \leq f(x_0) - f^* \leq \frac{\mu r_0^2}{2}.
    \end{equation}
    So, $\norm*{e_k} = \norm*{x_k - x^\ast} \leq r_0$ and the taylor approximations hold for all iterates.
\end{rem}

The main idea of the proof is that the iteration is a perturbed contraction, meaning that the main part contracts with some additional error added to it. This means that the iterates converge to a small neighbourhood around 0 of size determined by the perturbation. By ensuring the perturbation is small at the start, we can ensure that the iterates converge to a very small value. The lemma below proves the convergence of the perturbed fixed point iteration.
\begin{lem}[Perturbed Fixed Point]\label{lem:fp_perturb}
    Let $\mu_2 < \mu_1$ and $\epsilon > 0$ be a small term. Then the iteration
    \begin{equation}
        t_{k+1} \leq \frac{\mu_2}{\mu_1} t_k + \epsilon
    \end{equation}
    satisfies
    \begin{equation}
        \limsup t_k = O(\epsilon).
    \end{equation}
\end{lem}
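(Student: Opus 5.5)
Write $q := \mu_2/\mu_1 \in (0,1)$, so the hypothesis reads $t_{k+1} \le q\,t_k + \epsilon$. The plan is to unroll this linear recursive inequality, compare it with the fixed point $t^\ast = \epsilon/(1-q)$ of the map $t \mapsto qt + \epsilon$, and read off the $\limsup$. The implicit assumption is that $t_k \ge 0$, as it is in the application where $t_k$ is a tangent.

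\textbf{Unrolling.} By induction on $k$, using that multiplication by $q>0$ preserves inequalities, we get
\[
t_k \le q^k t_0 + \epsilon \sum_{j=0}^{k-1} q^j = q^k t_0 + \epsilon\,\frac{1-q^k}{1-q} \le q^k t_0 + \frac{\epsilon}{1-q}.
\]
An equivalent route is to set $s_k := t_k - t^\ast$. Then $s_{k+1} \le q s_k$, so $s_k \le q^k s_0$. This is the same bound in the form of a perturbed contraction around $t^\ast$.

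\textbf{Taking the limit.} Since $0<q<1$, the term $q^k t_0 \to 0$ provided $t_0$ is finite. Hence
\[
\limsup_k t_k \le \frac{\epsilon}{1-\mu_2/\mu_1} = \frac{\mu_1}{\mu_1-\mu_2}\,\epsilon = O(\epsilon).
\]
The constant hidden in $O(\cdot)$ is $\mu_1/(\mu_1-\mu_2)$. It is independent of $\epsilon$ but blows up as the gap closes, which matches the role of the spectral gap in Theorem \ref{thm:local-norefresh}.

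\textbf{Where care is needed.} There is no real obstacle here; the only points to get right are the following.
\begin{itemize}
\item The constant must be stated explicitly, so that later uses, with $\epsilon$ depending on $\delta$ or $\tau_i$, can track it.
\item Later uses need a finite-horizon version. The explicit bound $t_k \le q^k t_0 + \epsilon/(1-q)$ gives this directly: once $k \ge \log(t_0/\epsilon)/\log(1/q)$, we have $t_k \le (1 + 1/(1-q))\epsilon$.
\item The same argument goes through if $q$ and $\epsilon$ vary with $k$, as with $\tilde\mu_2^{(k)}$, provided $\sup_k q_k < 1$ and $\sup_k \epsilon_k < \infty$. One replaces $q$ by $\bar q = \sup_k q_k$ and $\epsilon$ by $\bar\epsilon = \sup_k \epsilon_k$ in the unrolled sum.
\end{itemize}
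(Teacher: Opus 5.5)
Your proof is correct and is the same argument as the paper's. The paper compares $t_k$ with the solution $s_k = q^k t_0 + \epsilon\frac{1-q^k}{1-q}$ of the equality recurrence, shows $t_k \le s_k$ by induction, and reads off the same constant $\mu_1/(\mu_1-\mu_2)$, which is exactly your unrolled bound. Your extra remarks (positivity of $q$ and $t_k$, the finite-horizon bound, and $k$-dependent $q_k,\epsilon_k$ with $\sup_k q_k<1$) are correct refinements that the paper's later uses rely on implicitly.
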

\begin{proof}
    Consider the recurrence
    \begin{equation*}
        s_{k+1} = \frac{\mu_2}{\mu_1} s_k + \epsilon,
    \end{equation*}
    which has a solution
    \begin{equation*}
        s_k = \left(\frac{\mu_2}{\mu_1}\right)^k t_0 + \epsilon \sum_{i=0}^{k-1} \left(\frac{\mu_2}{\mu_1}\right)^i = \left(\frac{\mu_2}{\mu_1}\right)^k t_0 + \frac{\epsilon}{1 - \frac{\mu_2}{\mu_1}} \left[1 - \left(\frac{\mu_2}{\mu_1} \right)^k\right].
    \end{equation*}
    Taking $\limsup$, we have
    \begin{equation*}
        \limsup_k s_k = \epsilon\frac{\mu_1}{\mu_1 - \mu_2}.
    \end{equation*}
    Since $t_k < s_k$ by induction, we have $\limsup_k t_k \leq c\epsilon$, where $c = \frac{\mu_1}{\mu_1 - \mu_2}$.
\end{proof}

Lastly, the analysis will be done through a sequential conditioning argument. Each step will be independent of the past events and we will obtain a conditional recurrence. The lemma below will then allow us to apply this result to obtain a final probability of the events of interest.
\begin{lem}\label{lem:prod_prob}
    Consider the sequence of $\sigma-$algebras $\mathcal{F}_1 \subset \cdots \subset \mathcal{F}_k$ and the sequence of events $A_1, \cdots, A_k$ which are measurable in their respective $\sigma$-algebra. i.e. $\mathbb{E}[1_{A_i} \mid \mathcal{F}_i] = \mathbb{P}[A_i]$. If $\forall k, \,\mathbb{P}[A_k \mid \mathcal{F}_{k-1}] \geq 1 - p_k$, then
    \begin{equation}
        \mathbb{P}\left[\bigcap_{i=1}^k A_i\right] 
        \geq (1-p_k) \times \mathbb{P}\left[\bigcap_{i=1}^{k-1} A_i\right] 
        \geq \prod_{i=1}^k (1-p_i).
    \end{equation}
\end{lem}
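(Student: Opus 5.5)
The statement to prove is Lemma \ref{lem:prod_prob}. The plan is an induction on $k$ using the tower property. The only thing that actually needs checking is that the earlier events are measurable with respect to the conditioning $\sigma$-algebra. I will read the hypothesis as saying that $A_i \in \mathcal{F}_i$ for each $i$. Since the filtration is nested, this gives
\[
  \bigcap_{i=1}^{k-1} A_i \in \mathcal{F}_{k-1}.
\]
So the indicator $1_{\bigcap_{i<k} A_i}$ can be pulled out of a conditional expectation given $\mathcal{F}_{k-1}$. The identity $\mathbb{E}[1_{A_i} \mid \mathcal{F}_i] = \mathbb{P}[A_i]$ written in the statement should be read as shorthand for this measurability, and only measurability is used.

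For the single step, let $B_{k-1} = \bigcap_{i=1}^{k-1} A_i$. By the tower property,
\begin{equation*}
  \mathbb{P}\left[B_{k-1} \cap A_k\right]
  = \mathbb{E}\left[ 1_{B_{k-1}}\, \mathbb{E}\left[1_{A_k} \mid \mathcal{F}_{k-1}\right]\right].
\end{equation*}
On every outcome, the hypothesis gives $\mathbb{E}[1_{A_k} \mid \mathcal{F}_{k-1}] = \mathbb{P}[A_k \mid \mathcal{F}_{k-1}] \geq 1 - p_k$. This conditional probability bound must hold almost surely, not just on average, for the next step to work. Because $1_{B_{k-1}} \geq 0$, the right-hand side is at least $(1-p_k)\,\mathbb{E}[1_{B_{k-1}}] = (1-p_k)\,\mathbb{P}[B_{k-1}]$. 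This is the first inequality of the lemma.

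The second inequality follows by applying the same step to $B_{k-1}$, then to $B_{k-2}$, and so on. Each application uses the factor $(1-p_j) \geq 0$, so the inequalities multiply correctly. The base case is $\mathbb{P}[A_1] \geq 1 - p_1$, where $\mathcal{F}_0$ is taken to be the trivial $\sigma$-algebra, so the conditional bound at $k=1$ is an unconditional one. Unrolling the recursion gives $\mathbb{P}[\bigcap_{i=1}^k A_i] \geq \prod_{i=1}^k (1-p_i)$.

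There is no real obstacle here. The one point to handle carefully is the indexing of the $\sigma$-algebras, in particular where $\mathcal{F}_0$ comes from. In the later applications the event at step $k$ depends on the fresh random matrix $\tilde P_k$, which is independent of the past, so the conditional bound holds uniformly over the past. That uniformity is exactly what allows the $1_{B_{k-1}}$ factor to be pulled out with the constant $(1-p_k)$ rather than a random quantity.
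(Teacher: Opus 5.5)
Your argument is correct and is essentially the paper's proof: condition on $\mathcal{F}_{k-1}$ via the tower property, pull out the $\mathcal{F}_{k-1}$-measurable indicator $1_{\bigcap_{i<k}A_i}$, apply the almost-sure bound $\mathbb{P}[A_k \mid \mathcal{F}_{k-1}] \geq 1-p_k$, and unroll by induction. Two conditions the paper's one-line induction uses without stating are made explicit in your version: you read the garbled hypothesis $\mathbb{E}[1_{A_i}\mid\mathcal{F}_i]=\mathbb{P}[A_i]$ as plain measurability $A_i\in\mathcal{F}_i$, and you use $p_j\le 1$ when multiplying the inequalities.
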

\begin{proof}
    Since $\mathbb{P}[A] = \mathbb{E}[1_A]$, we have
    \begin{align*}
        \mathbb{E}\left[1_{\bigcap_{i=1}^k A_i}\right]
            &= \mathbb{E}\left[\mathbb{E} \left[ 1_{\bigcap_{i=1}^{k-1} A_i} 1_{A_k} \bigm| \mathcal{F}_{k-1}\right]\right] \\
            &= \mathbb{E}\left[1_{\bigcap_{i=1}^{k-1} A_i} \mathbb{E} \left[ 1_{A_k} \bigm| \mathcal{F}_{k-1}\right]\right]\\
            &\geq (1-p_k) \times \mathbb{E}\left[1_{\bigcap_{i=1}^{k-1} A_i}\right]
    \end{align*}
    and the result follows.        
\end{proof}

\subsection{Analysis for Persistent Alignment}

\begin{proof}[Proof of Theorem \ref{thm:local-norefresh}]
    The proof will mainly proceed in 2 steps.
    \begin{enumerate}
        \item We will first show that there is a $k_1 \in (0, K_\epsilon)$ such that $\langle g_{k_1}, u_1 \rangle^2 \geq \delta_0 \norm*{g_{k_1}}^2$. In other words, we find that eventually the gradient of the function will be close to the direction $u_1$.
        \item In the next part, we will use the above result to show that even with a fixed $v_k = v_{k_1}$ for all $k > k_1$, $g_k$ still remains closely correlated to the direction $u_1$. This is done by controlling the error $E_k$ of successive gradient descent steps due to the random projections, by the closeness of the vectors between $u_1$ and $v_{k_1} = g_{k_1}$.
    \end{enumerate}
    In the heart of this analysis, we will consider the events which we desire and obtain a one-step recursion and the probability of this one step, conditioned on the past. This allows us to only consider the randomness introduced in this step, which is simply $\tilde P_k$.

    \paragraph{Phase 1: Gradient eventually enters into cone around $u_1$.} Define the following variable
    \begin{equation}
        t_k = \frac{\norm*{\Proj_{u_1^\perp}g_k}}{\norm*{\Proj_{u_1} g_k}}.
    \end{equation}
    Notice that $t_k$ is essentially the tangent of the angle between $g_k$ and $u_1$. To find an upper bound of $t_{k+1}$, we can first find an upper bound for the numerator and a lower bound for the denominator.
    \paragraph{Part 1a: Upper bound for Numerator.}
    Using Lemma \eqref{lem:gradrec}, we have
    \begin{equation*}
        \norm*{\Proj_{u_1^\perp}g_{k+1}} \leq \norm*{\Proj_{u_1^\perp}Mg_k} + \alpha \norm*{\Proj_{u_1^\perp}HE_k} + \norm*{\Proj_{u_1^\perp}r_k},
    \end{equation*}
    where $H$ is the Hessian at the optimal point and $M = I-\alpha H$.
    For the first term, we use the fact that $M$ and $\Proj_{u_1}$ commute to get
    \begin{equation*}
        \norm*{\Proj_{u_1^\perp}Mg_k} 
        = \norm*{M\Proj_{u_1^\perp}g_k}
        = \norm*{M\Proj_{u_1^\perp}^2g_k}
        \leq \norm*{M\Proj_{u_1^\perp}}\norm*{\Proj_{u_1^\perp}g_k}
        = \mu_2\norm*{\Proj_{u_1^\perp}g_k}.
    \end{equation*}
    For the second term, we can simply bound it by
    \begin{equation}
        \norm*{\Proj_{u_1^\perp}HE_k} \leq \lambda_n \norm*{(I-\tilde P_k \tilde P_k^\top)\Proj_{\hat v_k^\perp} g_k},
    \end{equation}
    where $E_k = (I-\tilde P_k \tilde P_k^\top - \hat v_k \hat v_k^\top) g_k= (I-\tilde P_k \tilde P_k^\top)(I - \hat v_k \hat v_k^\top)g_k$. Using the fact that $I - \tilde P_k \tilde P_k^\top$ is a projection and the alignment assumption, we have
    \begin{equation}
        \norm*{\Proj_{\hat v_k^\perp} g_k} = \sqrt{\left(I - \hat v_k \hat v_k^\top \right) g_k} \leq \sqrt{1-\delta} \norm*{g_k}.
    \end{equation}
    For the third term, we use Lemma \eqref{lem:gradrec} to get
    \begin{equation*}
        \norm*{\Proj_{u_1^\perp}r_k} \leq C_2 \|g_k\|^2.
    \end{equation*}
    Combining everything, the numerator can be bounded by
    \begin{align*}
        \norm*{\Proj_{u_1^\perp}g_{k+1}} 
        &\leq \mu_2 \norm*{\Proj_{u_1^\perp}g_{k}} + \alpha \lambda_n \sqrt{1-\delta}\|g_k\| + C_2\|g_k\|^2.
    \end{align*}
    \paragraph{Part 1b: Lower bound for Denominator.}
    For the denominator (the projection onto $u_1$), we have
    \begin{align*}
        \norm*{\Proj_{u_1} g_{k+1}} 
        &= \norm*{\Proj_{u_1}\left[Mg_k + \alpha HE_k + r_k\right]} \\
        &= \left|\mu_1 u_1^\top g_k + \alpha \lambda_1 u_1^\top E_k + u_1^\top r_k\right| \\
        &\geq \mu_1 \norm*{\Proj_{u_1}g_k} - \alpha \lambda_1 |u_1^\top E_k| - \|r_k\|.
    \end{align*}
    The second and third term will use the same bounds as in the numerator, giving us
    \begin{equation}
        \norm*{\Proj_{u_1} g_{k+1}} 
            \geq \mu_1 \norm*{\Proj_{u_1} g_k} - \alpha \lambda_1 \sqrt{1-\delta} \norm{g_k} - C_2 \norm{g_k}^2.
    \end{equation}
    \paragraph{Part 1c: Combining both bounds.}
    Taking both bounds, we obtain
    \begin{align*}
        t_{k+1} 
            &\leq \frac{\mu_2 \norm*{\Proj_{u_1^\perp}g_{k}} + \alpha \lambda_n \sqrt{1-\delta}\|g_k\| + C_2\|g_k\|^2}{\mu_1 \norm*{\Proj_{u_1} g_k} - \alpha \lambda_1 \sqrt{1-\delta} \norm{g_k} - C_2 \norm{g_k}^2} \\
            &= \frac{\mu_2 t_k + \alpha \lambda_n \sqrt{1-\delta} \frac{\|g_k\|}{\norm*{\Proj_{u_1} g_k} } + C_2\frac{\|g_k\|^2}{\norm*{\Proj_{u_1} g_k} }}{\mu_1 - \alpha \lambda_1 \sqrt{1-\delta} \frac{\norm{g_k}}{\norm*{\Proj_{u_1} g_k} } - C_2 \frac{\norm{g_k}^2}{\norm*{\Proj_{u_1} g_k}}}.
    \end{align*}
    Define $\epsilon_\delta = \sqrt{1-\delta}$ and $\epsilon_k = \frac{\norm*{g_k}}{\norm*{\Proj_{u_1} g_k}}$. We have by Lemma \ref{lem:exp_decay} that the gradient norm decays exponentially with factor $\rho_{decay}$ (omitting the variables for simplicity). Since we are in the local regime, we can assume that $\norm*{g_k} \leq \rho_{decay}^{k_0+k}$ for some fixed $k_0$ where the iteration of this convergence begins. Then, we can express the recursion of $t_{k+1}$ as
    \begin{equation}
        t_{k+1} 
            \leq \frac{\mu_2 t_k + \alpha \lambda_n \epsilon_\delta \epsilon_k + C_2\epsilon_k \rho_{decay}^{k_0 + k}}{\mu_1 - \alpha \lambda_1 \epsilon_\delta \epsilon_k - C_2 \epsilon_k \rho_{decay}^{k+k_0}}.
    \end{equation}
    
    \paragraph{Part 1d: Finding the Conditions For a Perturbed Fixed Point.}
    Since $\epsilon_\delta$ can be controlled arbitrarily small by having $\delta \gg 0$ and $\rho^{k_0}$ is exponentially small, we can use Taylor's expansion for $\frac{1}{1-x} = 1 + x(1+o(1))$ to get
    \begin{align*}
        t_{k+1} 
            &\leq \frac{\mu_2 t_k + \alpha \lambda_n \epsilon_{\delta} \epsilon_k + C_2 \epsilon_k \rho_{decay}^{k+k_0}}{\mu_1\left( 1 - \alpha \lambda_1 \mu_1^{-1} \epsilon_{\delta} \epsilon_k - C_2 \mu_1^{-1} \epsilon_k \rho_{decay}^{k+k_0}\right)} \\
            &= \frac{\mu_2 t_k + \alpha \lambda_n \epsilon_{\delta} \epsilon_k + C_2 \epsilon_k \rho_{decay}^{k+k_0}}{\mu_1} \left( 1 + \left(\alpha \lambda_1 \mu_1^{-1} \epsilon_{\delta} \epsilon_k + C_2 \mu_1^{-1} \epsilon_k \rho_{decay}^{k+k_0} \right)(1+o(1))\right) \\
            &= \frac{\tilde \mu_2^{(k)}}{\mu_1} t_k + \tilde \epsilon_k,
    \end{align*}
    where
    \begin{align*}
        \tilde \mu_2^{(k)} 
            &= \mu_2 \left( 1 + \left(\alpha \lambda_1 \mu_1^{-1} \epsilon_{\delta} \epsilon_k + C_2 \mu_1^{-1} \epsilon_k \rho_{decay}^{k+k_0}\right)(1+o(1))\right) \\
        \tilde \epsilon_k 
            &= \mu_1^{-1}\left(\alpha \lambda_n \epsilon_{\delta} \epsilon_k + C_2 \epsilon_k \rho_{decay}^{k+k_0}\right)\left(1 + \left(\alpha \lambda_1 \mu_1^{-1} \epsilon_{\delta} \epsilon_k + C_2 \mu_1^{-1} \epsilon_k \rho_{decay}^{k+k_0} \right)(1+o(1)) \right)
    \end{align*}
    By assumption, we have $\abs*{\inner*{u_1, g_{k_0}}} = \omega \norm{g_{k_0}}$ for some $\omega > 0$, which is equivalent to $\epsilon_0 = \omega^{-1}$. Consequently, we have $\norm*{\Proj_{u_1^\perp} g_k} = \sqrt{1-\omega^2}\norm*{g_k}$. This tells us that
    \begin{equation}
        t_0 = \frac{\sqrt{1-\omega^2}}{\omega}.
    \end{equation}
    Consider the following decomposition of expressing $\|g_k\|$ in terms of $\norm*{\Proj_{u_1^\perp}g_k}$
    \begin{equation}\label{eqn:g_to_tk}
        \|g_k\| = \sqrt{\norm*{\Proj_{u_1}g_k}^2 + \norm*{\Proj_{u_1^\perp}g_k}^2} = \norm*{\Proj_{u_1}g_k} \sqrt{1 + t_k^2}.
    \end{equation}
    We find that
    \begin{equation}
        \epsilon_k = \sqrt{1 + t_k^2} \implies \epsilon_{k+1} = \sqrt{1 + t_{k+1}^2} \leq \sqrt{1 + \left(\frac{\tilde \mu_2^{(k)}}{\mu_1} t_k + \tilde \epsilon_k \right)^2}.
    \end{equation}
    Then, an upper bound for $\epsilon_{k+1}$ yields
    \begin{align*}
        \epsilon_{k+1}
            &\leq \sqrt{\frac{1 + \left(\frac{\tilde \mu_2^{(k)}}{\mu_1} t_k + \tilde \epsilon_k \right)^2}{1 + t_k^2}} \epsilon_k.
    \end{align*}
    To show that it does not grow, we just have to show that the factor in front of $\epsilon_k$ is $<1$. Equivalently, we have
    \begin{align*}
        &\sqrt{\frac{1 + \left(\frac{\tilde \mu_2^{(k)}}{\mu_1} t_k + \tilde \epsilon_k \right)^2}{1 + t_k^2}}  < 1 \\
        \iff& \frac{\tilde \mu_2^{(k)}}{\mu_1} t_k + \tilde \epsilon_k < t_k \\
        \iff& \mu_1 \tilde \epsilon_k < \left(\mu_1 - \tilde \mu_2^{(k)}\right)t_k.
    \end{align*}
    So, as long as $\tilde \epsilon_0 < \left(1 - \frac{\tilde \mu_2^{(0)}}{\mu_1}\right) t_0 = \left(1 - \frac{\tilde \mu_2^{(0)}}{\mu_1}\right) \frac{\sqrt{1-\omega^2}}{\omega}$, we can ensure that $\epsilon_{1} < \epsilon_0 = \omega^{-1}$, which also implies that $\tilde \epsilon_1 < \tilde \epsilon_0$. So, we require that
    \begin{align*}
        &\mu_1^{-1}\left(\alpha \lambda_n \epsilon_{\delta} \epsilon_0 + C_2 \epsilon_0 \rho_{decay}^{k_0}\right)\left(1 + \left(\alpha \lambda_1 \mu_1^{-1} \epsilon_{\delta} \epsilon_0 + C_2 \mu_1^{-1} \epsilon_0 \rho_{decay}^{k_0} \right)(1+o(1)) \right) \\
        <& \left(1 - \frac{\mu_2 \left( 1 + \left(\alpha \lambda_1 \mu_1^{-1} \epsilon_{\delta} \epsilon_0 + C_2 \mu_1^{-1} \epsilon_0 \rho_{decay}^{k_0}\right)(1+o(1))\right)}{\mu_1}\right) \left(\frac{\sqrt{1-\omega^2}}{\omega}\right).
    \end{align*}
    To simplify the terms further, we have by definition that $|o(1)| < \eta$ for some $\eta \in (0,1)$. Then, a sufficient condition for the above inequality is
    \begin{equation*}
        \left[\frac{\sqrt{1-\omega^2}}{\omega}\mu_2 + \alpha \lambda_n \epsilon_{\delta} \epsilon_0 + C_2 \epsilon_0 \rho_{decay}^{k_0}\right]\left[1 + \left(\alpha \lambda_1 \mu_1^{-1} \epsilon_{\delta} \epsilon_0 + C_2 \mu_1^{-1} \epsilon_0 \rho_{decay}^{k_0} \right)(1+\eta) \right] < \frac{\sqrt{1-\omega^2}}{\omega} \mu_1.
    \end{equation*}
    Expanding and expressing the equation in terms of $\epsilon_\delta$, we get
    \begin{equation}
        a \epsilon_{\delta}^2 + b \epsilon_\delta + c < 0,
    \end{equation}
    where
    \begin{align}
        a &=  (1+\eta) \left(\alpha \lambda_1 \mu_1^{-1} \epsilon_0 \right) \left( \alpha \mu_1 \lambda_n \epsilon_0 \right) \nonumber\\
        &= (1+\eta)\alpha^2\lambda_1\lambda_n \epsilon_0^2 \\
        b &=  \alpha \mu_1 \lambda_n \epsilon_0 \left[1 + C_2 \mu_1^{-1} \epsilon_0 \rho_{decay}^{k_0} (1+\eta) \right] +  \left[\frac{\sqrt{1-\omega^2}}{\omega}\mu_2 +  C_2 \epsilon_0 \rho_{decay}^{k_0}\right]\alpha \lambda_1 \mu_1^{-1} \epsilon_0 (1+\eta) \nonumber \\
        &= \alpha \epsilon_0 \left[(1+\eta) \frac{\lambda_1}{\mu_1} \left(\frac{\sqrt{1-\omega^2}}{\omega} \mu_2 + C_2 \epsilon_0 \rho_{decay}^{k_0}\right) + \lambda_n \left(\mu_1 + C_2 \epsilon_0 \rho_{decay}^{k_0} (1+\eta) \right) \right] \\
        c &= \left(\frac{\sqrt{1-\omega^2}}{\omega}\mu_2 + C_2 \epsilon_0 \rho_{decay}^{k_0}\right)\left(1 + C_2 \mu_1^{-1} \epsilon_0 \rho_{decay}^{k_0} (1+\eta) \right) - \frac{\sqrt{1-\omega^2}}{\omega} \mu_1 \nonumber \\
        &=C_2 \epsilon_0 \rho_{decay}^{k_0} + (1+\eta)C_2 \mu_1^{-1} \epsilon_0 \rho_{decay}^{k_0}\left(\frac{\sqrt{1-\omega^2}}{\omega}\mu_2 + C_2 \epsilon_0 \rho_{decay}^{k_0}\right) - \alpha \frac{\sqrt{1-\omega^2}}{\omega} (\lambda_2 - \lambda_1) \nonumber \\
        &= C_2 \epsilon_0 \rho_{decay}^{k_0} \left(1 + (1+\eta) \frac{\sqrt{1-\omega^2}}{\omega} \frac{\mu_2}{\mu_1} + (1+\eta) C_2 \mu_1^{-1}\epsilon_0 \rho_{decay}^{k_0} \right) - \alpha \frac{\sqrt{1-\omega^2}}{\omega} (\lambda_2 - \lambda_1).
    \end{align}
    Since $a > 0$, to obtain an admissible solution $0 \leq \epsilon_\delta < \epsilon_\delta^\ast$, we require that $c < 0$. Under this condition, coupled with $a > 0$, it suffices that $\epsilon_\delta^\ast = -\frac{c}{b}$.
    \begin{align*}
        \epsilon_\delta^\ast 
            &= \frac{\alpha \frac{\sqrt{1-\omega^2}}{\omega} (\lambda_2 - \lambda_1) -C_2 \epsilon_0 \rho_{decay}^{k_0} \left(1 + (1+\eta) \frac{\sqrt{1-\omega^2}}{\omega} \frac{\mu_2}{\mu_1} + (1+\eta) C_2 \mu_1^{-1}\epsilon_0 \rho_{decay}^{k_0} \right) }{\alpha \epsilon_0 \left[(1+\eta) \frac{\lambda_1}{\mu_1} \left(\frac{\sqrt{1-\omega^2}}{\omega} \mu_2 + C_2 \epsilon_0 \rho_{decay}^{k_0}\right) + \lambda_n \left(\mu_1 + C_2 \epsilon_0 \rho_{decay}^{k_0} (1+\eta) \right) \right]}.
    \end{align*}
    Assuming that $\rho_{decay}^{k_0}$ is sufficiently small in the local regime, we have
    \begin{equation}
        \epsilon_\delta^\ast 
            \lesssim \frac{ \frac{\sqrt{1-\omega^2}}{\omega} (\lambda_2 - \lambda_1)}{\epsilon_0 \left[(1+\eta) \frac{\sqrt{1-\omega^2}}{\omega} \frac{\mu_2}{\mu_1} \lambda_1 + \lambda_n \mu_1 \right]}
            < \frac{\lambda_2 - \lambda_1}{(1+\eta) \omega^{-1} \frac{\mu_2}{\mu_1} \lambda_1 + (1-\omega^2)^{-1/2} \lambda_n \mu_1},
    \end{equation}
    where we used $\omega^{-1} = \epsilon_0 \geq 1$. Since $\epsilon_{\delta}^\ast = \sqrt{1-\delta^\ast}$, we equivalently have
    \begin{equation}
        \delta^\ast
            \gtrsim 1 -  \left(\frac{\lambda_2 - \lambda_1}{(1+\eta) \omega^{-1} \frac{\mu_2}{\mu_1} \lambda_1 + (1-\omega^2)^{-1/2} \lambda_n \mu_1}\right)^2.
    \end{equation}
    Now, we want to ensure that this is indeed a contraction, to do so, we require $\mu_2^{(0)} < \mu_1$.
    \begin{align*}
        1 + \left(\alpha \lambda_1 \mu_1^{-1} \epsilon_{\delta} \epsilon_0 + C_2 \mu_1^{-1} \epsilon_0 \rho_{decay}^{k_0}\right)(1+\eta) 
        &< \frac{\mu_1}{\mu_2} \\
        \alpha \lambda_1 \mu_1^{-1} \epsilon_{\delta} \epsilon_0 + C_2 \mu_1^{-1} \epsilon_0 \rho_{decay}^{k_0}
        &< \frac{\mu_1 - \mu_2}{\mu_2 (1+\eta)} \\
        \lambda_1 \mu_1^{-1} \epsilon_{\delta} \epsilon_0 
        &< \frac{\lambda_2 - \lambda_1}{ \mu_2 (1+\eta)} - C_2 \alpha^{-1}\mu_1^{-1} \epsilon_0 \rho_{decay}^{k_0} \\
         \epsilon_{\delta} 
        &< \epsilon_0^{-1} \frac{\mu_1}{\mu_2(1+\eta)}\frac{\lambda_2 - \lambda_1}{ \lambda_1} - C_2 \frac{\rho_{decay}^{k_0}}{\alpha\lambda_1}.
    \end{align*}
    Equivalently, we have
    \begin{equation}
        \epsilon_{\delta}^\ast \lesssim \frac{\omega\mu_1}{\mu_2(1+\eta)}\frac{\lambda_2 - \lambda_1}{ \lambda_1} 
        \implies \delta^\ast \gtrsim 1 - \left(\omega\frac{\mu_1}{\mu_2(1+\eta)}\frac{\lambda_2 - \lambda_1}{ \lambda_1} \right)^2.
    \end{equation}
    For the inductive step, for $\epsilon_{2}$ to not expand, we require that $\mu_{1} \tilde \epsilon_1 < \left( \mu_1 - \tilde \mu_2^{(1)}\right)t_1$. Since $\tilde \epsilon_1 < \tilde \epsilon_0$ and $\mu_2^{(1)} < \mu_2^{(0)}$, we require $\mu_{1} \tilde \epsilon_0 < \left( \mu_1 - \tilde \mu_2^{(0)}\right)t_1$. Note that in Lemma \eqref{lem:fp_perturb}, we have that $\limsup t_k < \frac{\mu_1}{\mu_1 - \mu_2^{(0)}} \tilde \epsilon_0$. So either we have reached the stable point (i.e. $O(\tilde \epsilon_0)$) or $t_k$ is still larger than this radius. If the former, we are done, if the latter, we satisfy the inequality above implying $\epsilon_2 < \epsilon_1$. Hence, we have that either we have reached the neighbourhood around $\tilde \epsilon_0$ or $\epsilon_k$ will be non-increasing which implies the factors $\mu_2^{(k)}, \tilde \epsilon_{k}$ are non-increasing. 
    
    Then, for 
    $$\delta \gtrsim 1 - \min{ \left(\omega\frac{\mu_1}{\mu_2(1+\eta)}\frac{\lambda_2 - \lambda_1}{ \lambda_1},  \frac{\lambda_2 - \lambda_1}{(1+\eta) \omega^{-1} \frac{\mu_2}{\mu_1} \lambda_1 + (1-\omega^2)^{-1/2} \lambda_n \mu_1}\right)}^2,$$
    we have that $t_{k+1} \leq \frac{\tilde \mu_2^{(0)}}{\mu_1} t_k + \tilde \epsilon_0$ is a perturbed contracting, which according to Lemma \eqref{lem:fp_perturb}, $\limsup t_k = O(\epsilon_\delta) < C\epsilon_\delta$ for some fixed constant $C$. The contraction forces the iterates to stay within a $C\epsilon_\delta$ radius around 0. Then fix any target $\tilde \delta_0$, let $\epsilon_\delta = \frac{1}{C} \sqrt{\frac{1- \tilde \delta_0}{\tilde \delta_0}}$, there exists $k_1$ such that $t_{k_1} < C \epsilon_\delta$ (we assume $k_1 < K_\epsilon$, if not then we have already converged). Since $t_{k_1}$ is the tangent of the angle between $u_1$ and $g_{k_1}$, we have by the formula $\cos^2 \theta = \frac{1}{1 + \tan^2 \theta}$ that $\cos^2 \theta_{k_1} \ge \tilde \delta_0$ which is equivalent to $\inner*{u_1, g_{k_1}}^2 \geq \tilde \delta_0 \norm*{g_{k_1}}^2$. For simplicity, we let $\inner*{u_1, g_{k_1}}^2 = \delta_0 \norm*{g_{k_1}}^2$ where $\delta_0 \ge \tilde \delta_0$.
    
    \paragraph{Phase 2: Freezing $v_k$ still promises continued alignment with $u_1$.}
    For this phase, consider $k \ge k_1$. Consider the expansion
    \begin{equation}
        g_{k+1} = \left( I - \alpha H P_k P_k^\top \right) g_k + r_k.
    \end{equation}

    \paragraph{Part 2a: Upper Bounding Denominator.}
    Expanding $P_k P_k^\top = \hat v_{k_1} \hat v_{k_1}^\top + \tilde P_k \tilde P_k^\top$, we find that for $i \neq 1$,
    \begin{align*}
        u_i^\top g_{k+1} 
            &= u_i^\top \left( I - \alpha H \left(\hat v_{k_1} \hat v_{k_1}^\top + \tilde P_k \tilde P_k^\top \right) \right) g_k  + u_i^\top r_k \\
            &= u_i^\top g_k - \alpha \lambda_i \inner*{\tilde P_k^\top u_i, \tilde P_k^\top g_k} - \alpha \lambda_i u_i^\top \hat v_{k_1} \hat v_{k_1}^\top g_k + u_i^\top r_k.
    \end{align*}
    For the second term, we have
    \begin{align*}
        \inner*{\tilde P_k^\top u_i, \tilde P_k^\top g_k}
            &\geq \inner*{\left(I - \hat v_{k_1} \hat v_{k_1}^\top \right) u_i, \left(I - \hat v_{k_1} \hat v_{k_1}^\top \right) g_k} - \tau \norm*{\left(I - \hat v_{k_1} \hat v_{k_1}^\top \right) u_i} \norm*{\left(I - \hat v_{k_1} \hat v_{k_1}^\top \right) g_k} \\
            &= u_i^\top \left(I - \hat v_{k_1} \hat v_{k_1}^\top \right) g_k - \tau \norm*{\Proj_{\hat v_{k_1}^\perp}g_k}\norm*{\Proj_{\hat v_{k_1}^\perp}u_i}  \\
            &= u_i^\top g_k - u_i^\top \hat v_{k_1} \hat v_{k_1}^\top g_k - \tau \norm*{\Proj_{\hat v_{k_1}^\perp}g_k}\norm*{\Proj_{\hat v_{k_1}^\perp}u_i}
    \end{align*}
    with probability $1 - \exp{(-cd\tau^2)}$.
    Substituting back into the original equation, we obtain an upper bound
    \begin{align}
        u_i^\top g_{k+1} 
            &\leq u_i^\top g_k - \alpha \lambda_i \left( u_i^\top g_k - u_i^\top \hat v_{k_1} \hat v_{k_1}^\top g_k - \tau \norm*{\Proj_{\hat v_{k_1}^\perp}g_k}\norm*{\Proj_{\hat v_{k_1}^\perp}u_i} \right)  \nonumber \\
            &\qquad - \alpha \lambda_i u_i^\top \hat v_{k_1} \hat v_{k_1}^\top g_k + u_i^\top r_k \nonumber \\
            &= (1-\alpha \lambda_i) \inner*{u_i, g_k} + \alpha \lambda_i \tau \norm*{\Proj_{\hat v_{k_1}^\perp}g_k}\norm*{\Proj_{\hat v_{k_1}^\perp}u_i} + u_i^\top r_k. \label{eqn:phase2_uigk}
    \end{align}
    Taking absolute value, we obtain
    \begin{align}
        \abs*{u_i^\top g_{k+1}} 
            &\leq (1-\alpha \lambda_i) \abs*{\inner*{u_i, g_k}} + \alpha \lambda_i \tau \left( \norm*{\Proj_{\hat v_{k_1}^\perp}g_k}\norm*{\Proj_{\hat v_{k_1}^\perp}u_i} \right) + \norm*{r_k}
    \end{align}

    \paragraph{Part 2b: Lower Bounding Denominator.}
    On the other hand, for the projection onto $u_1$, we have
    \begin{align*}
        u_1^\top g_{k+1} 
            &= u_1^\top g_k - \alpha \lambda_1 \inner*{\tilde P_k^\top u_1, \tilde P_k^\top g_k} - \alpha \lambda_1 u_1^\top \hat v_{k_1} \hat v_{k_1}^\top g_k + u_1^\top r_k.
    \end{align*}
    Similarly, by JLL, the second term can be upper bounded by
    \begin{align*}
        \inner*{\tilde P_k^\top u_1, \tilde P_k^\top g_k}
            &\leq u_1^\top \left(I - \hat v_{k_1} \hat v_{k_1}^\top \right) g_k + \tau \norm*{\Proj_{\hat v_{k_1}^\perp}g_k}\norm*{\Proj_{\hat v_{k_1}^\perp}u_1} \\
            &= u_1^\top g_k - u_i^\top \hat v_{k_1} \hat v_{k_1}^\top g_k + \tau \norm*{\Proj_{\hat v_{k_1}^\perp}g_k}\norm*{\Proj_{\hat v_{k_1}^\perp}u_1}
    \end{align*}
    with probability $1 - \exp{(-cd\tau^2)}$. This give us
    \begin{align*}
        u_1^\top g_{k+1} 
            &\geq u_1^\top g_k - \alpha \lambda_1 \left( u_1^\top g_k - u_1^\top \hat v_{k_1} \hat v_{k_1}^\top g_k + \tau \norm*{\Proj_{\hat v_{k_1}^\perp}g_k}\norm*{\Proj_{\hat v_{k_1}^\perp}u_1} \right)  \\
            &\qquad - \alpha \lambda_1 u_1^\top \hat v_{k_1} \hat v_{k_1}^\top g_k + u_1^\top r_k \\
            &= (1-\alpha \lambda_1) \inner*{u_1, g_k} - \alpha \lambda_1 \tau \norm*{\Proj_{\hat v_{k_1}^\perp}g_k}\norm*{\Proj_{\hat v_{k_1}^\perp}u_1} + u_1^\top r_k.
    \end{align*}
    Using $\norm*{\Proj_{\hat v_{k_1}^\perp}u_1} \leq \sqrt{1-\delta_0} = \epsilon_{\delta_0}$ and triangle inequality, the absolute value of the correlation yields a lower bound
    \begin{align}
        \abs*{u_1^\top g_{k+1}}
            \geq (1-\alpha \lambda_1) \abs*{u_1^\top g_k} - \alpha \lambda_1 \tau \epsilon_{\delta_0} \norm*{\Proj_{\hat v_{k_1}^\perp}g_k} - \norm*{r_k}.
    \end{align}

    \paragraph{Part 2c: Combining Both Bounds.}
    If we define $t_{i}^{(k)} = \frac{\abs*{\inner*{u_i, g_k}}}{\abs*{\inner*{u_1, g_k}}}$, then we have the recursion
    \begin{equation}
        t^{(k+1)}_i \leq \frac{(1-\alpha \lambda_i) \abs*{\inner*{u_i, g_k}} + \alpha \lambda_i \tau_i \left( \norm*{\Proj_{\hat v_{k_1}^\perp}g_k}\norm*{\Proj_{\hat v_{k_1}^\perp}u_i} \right) + \norm*{r_k}}{(1-\alpha \lambda_1) \abs*{u_1^\top g_k} - \alpha \lambda_1 \tau_1 \epsilon_{\delta_0} \norm*{\Proj_{\hat v_{k_1}^\perp}g_k} - \norm*{r_k}}, \qquad \forall i \geq 2,
    \end{equation}
    which has a similar expression as in phase 1, holding with probability at least $1 - \sum_{i}\exp{(-cd\tau_i^2)}$. Simplifying the equation above, we get
    \begin{equation}
        t^{(k+1)}_i \leq \frac{\mu_i t^{(k)}_i + \alpha \lambda_i \tau_i \epsilon_k + C_2' \epsilon_k \norm*{g_k}}{\mu_1 - \alpha \lambda_1 \tau_1 \epsilon_{\delta_0} \epsilon_k - C_2' \epsilon_k \norm*{g_k}},
    \end{equation}
    where $\epsilon_k = \frac{\norm{g_k}}{\abs{u_1^\top g_k}}$ and $C_2' = \left(\frac{n-1}{d}\right)^2 C_2$. In Lemma \ref{lem:gradrec}, we used the fact that $\norm{P_k P_k^\top}_{op} = 1$, however in phase 2, we have that this is $\frac{n-1}{d}$ instead. Despite the case, the main contributor is still the exponentially decaying term, and it would only require $O(\log(n/d))$ steps for the extra factor to be negligible. The iteration starts from $k = k_1$, where from phase 1, we have $\epsilon_{k_1} = \delta_0^{-1/2}$. Doing the same trick as in phase 1, where we used the Taylor expansion for $\frac{1}{1-x}$, we obtain
    \begin{align*}
        t^{(k+1)}_i \leq \frac{\mu_i t^{(k)}_i + \alpha \lambda_i \tau_i \epsilon_k + C_2' \epsilon_k \norm*{g_k}}{\mu_1} \left[ 1 + (1+\eta)\left(\alpha \mu_1^{-1}\lambda_1 \tau_1 \epsilon_{\delta_0} \epsilon_k + C_2' \mu_1^{-1}\epsilon_k \norm*{g_k}\right) \right],
    \end{align*}
    for some $\eta \in (0,1)$. With a small perturbation to Lemma \eqref{lem:exp_decay}, we find that for $k > k_1$, we have $\norm*{g_{k} }^2 \leq C \tilde \rho^{k-k_1} \rho_{decay}^{k_1}$, where $\tilde \rho$ is the rate of decay in phase 2 with the rescaled matrix. Rewriting the equation yields
    \begin{align*}
       t^{(k+1)}_i 
            &\leq \frac{\mu_i t^{(k)}_i + \alpha \lambda_i \tau_i \epsilon_k + C_2' \epsilon_k \tilde \rho^{k-k_1}\rho_{decay}^{k_1}}{\mu_1} \left[ 1 + (1+\eta)\left(\alpha \mu_1^{-1}\lambda_1 \tau_1 \epsilon_{\delta_0} \epsilon_k + C_2' \mu_1^{-1} \epsilon_k \tilde \rho^{k-k_1} \rho_{decay}^{k_1}\right) \right] \\
            &= \frac{\tilde \mu_i^{(k)}}{\mu_1} t^{(k)}_i + \tilde \epsilon^{(k)}_i,
    \end{align*}
    where
    \begin{align*}
        \tilde \mu_i^{(k)} 
            &= \mu_i \left[ 1 + (1+\eta)\left(\alpha \mu_1^{-1}\lambda_1 \tau_1 \epsilon_{\delta_0} \epsilon_k + C_2' \mu_1^{-1} \epsilon_k \tilde \rho^{k-k_1} \rho_{decay}^{k_1}\right) \right] \\
        \tilde \epsilon_i^{(k)}
            &=  {\mu_1^{-1}}\left(\alpha \lambda_i \tau_i \epsilon_k + C_2' \epsilon_k \tilde \rho^{k-k_1} \rho_{decay}^{k_1}\right) \left[ 1 + (1+\eta)\left(\alpha \mu_1^{-1}\lambda_1 \tau_1 \epsilon_{\delta_0} \epsilon_k + C_2' \mu_1^{-1} \epsilon_k \tilde \rho^{k-k_1} \rho_{decay}^{k_1}\right) \right].
    \end{align*}

    \paragraph{Part 2d: Finding the Condition for Perturbed Fixed Point.}
    First, for the factor to be a contraction, we require $\tilde \mu_i^{(k)} < \mu_1$.
    \begin{align*}
        1 + (1+\eta)\left(\alpha \mu_1^{-1}\lambda_1 \tau_1 \epsilon_{\delta_0} \epsilon_k + C_2' \mu_1^{-1} \epsilon_k \tilde \rho^{k-k_1} \rho_{decay}^{k_1}\right)
        &< \frac{\mu_1}{\mu_i} \\
        \alpha \mu_1^{-1}\lambda_1 \tau_1 \epsilon_{\delta_0} \epsilon_k + C_2' \mu_1^{-1} \epsilon_k \tilde \rho^{k-k_1} \rho_{decay}^{k_1}
        &< \alpha \frac{\lambda_i - \lambda_1}{\mu_i} \\
        \lambda_1 \tau_1 \epsilon_{\delta_0} \epsilon_k 
        &< \frac{\mu_1}{\mu_i} (\lambda_i - \lambda_1) - C_2' \alpha^{-1} \epsilon_k \tilde \rho^{k-k_1} \rho_{decay}^{k_1} \\
        \tau_1 \epsilon_{\delta_0}
        &< \frac{\mu_1}{\mu_i} \frac{\lambda_i - \lambda_1}{\lambda_1} \epsilon_k^{-1} - C_2' \lambda_1^{-1} \alpha^{-1} \tilde \rho^{k-k_1} \rho_{decay}^{k_1}.
    \end{align*}
    For $k = k_1$, it suffices that 
    \begin{equation}\label{eqn:phase2_tau1}
        \tau_1 \epsilon_{\delta_0} < \frac{\mu_1}{\mu_2} \frac{\lambda_2 - \lambda_1}{\lambda_1} \sqrt{\delta_0} - C_2' \lambda_1^{-1} \alpha^{-1} \rho_{decay}^{k_1},
    \end{equation}
    where the minimum is taken over all $i \geq 2$ and satisfied when $i = 2$. Under this $\tau_1$ condition, the initial inequality at the top is satisfied for all $i \geq 2$. To control $\epsilon_k$, we will consider a slightly different decomposition of $\norm*{g_k}$ as before.
    \begin{equation}
        \norm*{g_k} = \sqrt{\sum_{i=1}^n \abs*{u_i^\top g_k}^2} = \abs*{u_1^\top g_k} \sqrt{1 + \sum_{i \geq 2} \left(t_i^{(k)}\right)^2}.
    \end{equation}
    Consequently, we have
    \begin{equation*}
        \epsilon_k = \sqrt{1 + \sum_{i \geq 2} \left(t_i^{(k)}\right)^2} 
            \implies
        \epsilon_{k+1} \leq \epsilon_k \sqrt{\frac{1 + \sum_{i \geq 2} \left( \frac{\tilde \mu_i^{(k)}}{\mu_1} t^{(k)}_i + \tilde \epsilon^{(k)}_i\right)^2}{1 + \sum_{i \geq 2} \left(t_i^{(k)}\right)^2} }.
    \end{equation*}
    For $\epsilon_k$ to be non-increasing in $k$, the factor has to be $\leq 1$. Equivalently, we require
    \begin{equation*}
        {\sum_{i \geq 2} \left( \frac{\tilde \mu_i^{(k)}}{\mu_1} t^{(k)}_i + \tilde \epsilon^{(k)}_i\right)^2} \leq {\sum_{i \geq 2} \left(t_i^{(k)}\right)^2}, 
    \end{equation*}
    and it suffices that
    \begin{equation}
        {\frac{\tilde \mu_i^{(k)}}{\mu_1} t^{(k)}_i + \tilde \epsilon^{(k)}_i} \leq {t_i^{(k)}} \qquad \forall i \geq 2.
    \end{equation}
    Consider $k = k_1$, we require
    \begin{align*}
        &\left(\alpha \lambda_i \tau_i \epsilon_{k_1} + C_2' \epsilon_{k_1} \rho_{decay}^{k_1}\right) \left[ 1 + (1+\eta)\left(\alpha \mu_1^{-1}\lambda_1 \tau_1 \epsilon_{\delta_0} \epsilon_{k_1} + C_2' \mu_1^{-1} \epsilon_{k_1}  \rho_{decay}^{k_1}\right) \right] \\
        \leq& \left( \mu_1 - \mu_i \left[ 1 + (1+\eta)\left(\alpha \mu_1^{-1}\lambda_1 \tau_1 \epsilon_{\delta_0} \epsilon_{k_1} + C_2' \mu_1^{-1} \epsilon_{k_1} \rho_{decay}^{k_1}\right) \right] \right) t_i^{(k_1)}.
    \end{align*}
    If $t_i^{(k_1)} < \frac{\mu_1}{\mu_1 - \tilde \mu_i^{(k_1)}} \tilde \epsilon_i^{(k_1)}$, we are done as that is indeed the best result we can obtain for the convergence. In the worst case, we have $t_i^{(k_1)} = \sqrt{\frac{1-\delta_0}{\delta_0}}$ and $\epsilon_{k_1} = \frac{1}{\sqrt{\delta_0}}$. For this, we simplify the above inequality by multiplying both sides with $\sqrt{\delta_0}$ to get
    \begin{align*}
        &\left(\alpha \lambda_i \tau_i + C_2'  \rho_{decay}^{k_1}\right) \left[ 1 + (1+\eta)\left(\alpha \mu_1^{-1}\lambda_1 \tau_1 \epsilon_{\delta_0} \epsilon_{k_1} + C_2' \mu_1^{-1} \epsilon_{k_1}  \rho_{decay}^{k_1}\right) \right] \\
        \leq& \left( \mu_1 - \mu_i \left[ 1 + (1+\eta)\left(\alpha \mu_1^{-1}\lambda_1 \tau_1 \epsilon_{\delta_0} \epsilon_{k_1} + C_2' \mu_1^{-1} \epsilon_{k_1} \rho_{decay}^{k_1}\right) \right] \right) \sqrt{1-\delta_0}.
    \end{align*}
    Combining the second term on the RHS with the LHS, we obtain
    \begin{align*}
        \left(\sqrt{1-\delta_0} \mu_i + \alpha \lambda_i \tau_i + C_2'  \rho_{decay}^{k_1}\right) \left[ 1 + (1+\eta)\left(\alpha \mu_1^{-1}\lambda_1 \tau_1 \epsilon_{\delta_0} \epsilon_{k_1} + C_2' \mu_1^{-1} \epsilon_{k_1} \rho_{decay}^{k_1}\right) \right]
        \leq \mu_1 \sqrt{1-\delta_0}.
    \end{align*}
    Considering this as a function of $\tau_i$, we get a linear function
    \begin{equation*}
        a \tau_i + b \leq 0,
    \end{equation*}
    where
    \begin{align*}
        a 
            &= \alpha \lambda_i \left[ 1 + (1+\eta)\left(\alpha \mu_1^{-1}\lambda_1 \tau_1 \epsilon_{\delta_0} \epsilon_{k_1} + C_2' \mu_1^{-1} \epsilon_{k_1} \rho_{decay}^{k_1}\right) \right]\\
        b
            &= \left(\sqrt{1-\delta_0} \mu_i + C_2'  \rho_{decay}^{k_1}\right) \left[ 1 + (1+\eta)\left(\alpha \mu_1^{-1}\lambda_1 \tau_1 \epsilon_{\delta_0} \epsilon_{k_1} + C_2' \mu_1^{-1} \epsilon_{k_1} \rho_{decay}^{k_1}\right) \right] -\mu_1 \sqrt{1-\delta_0}.
    \end{align*}
    Since $a > 0$ this is equivalent to $\tau_i < \frac{b}{a}$.
    \begin{align*}
        \lambda_i\tau_i 
            &< \frac{\mu_1 \sqrt{1-\delta_0} - \left(\sqrt{1-\delta_0} \mu_i + C_2'  \rho_{decay}^{k_1}\right) \left[ 1 + (1+\eta)\left(\alpha \mu_1^{-1}\lambda_1 \tau_1 \epsilon_{\delta_0} \epsilon_{k_1} + C_2' \mu_1^{-1} \epsilon_{k_1} \rho_{decay}^{k_1}\right) \right]}{\alpha \left[ 1 + (1+\eta)\left(\alpha \mu_1^{-1}\lambda_1 \tau_1 \epsilon_{\delta_0} \epsilon_{k_1} + C_2' \mu_1^{-1} \epsilon_{k_1} \rho_{decay}^{k_1}\right) \right]} \\
        \frac{\alpha \lambda_i \tau_i}{\sqrt{1-\delta_0}}
            &< \frac{\mu_1 }{\left[ 1 + (1+\eta)\left(\alpha \mu_1^{-1}\lambda_1 \tau_1 \epsilon_{\delta_0} \epsilon_{k_1} + C_2' \mu_1^{-1} \epsilon_{k_1} \rho_{decay}^{k_1}\right) \right]} - \left(\mu_i + C_2' (1-\delta_0)^{-1/2} \rho_{decay}^{k_1}\right).
    \end{align*}
    Similar to Phase 1, we assume the contribution by $\rho_{decay}^{k_1}$ is small in the local regime and the second term in the denominator on the RHS is controlled small by $\epsilon_{\delta_0}$ in comparison to 1, giving us
    \begin{equation*}
        \tau_i 
           \lesssim \frac{(1-\tilde \eta)\mu_1 - \mu_i}{\alpha \lambda_i} \sqrt{1-\delta_0} \lesssim \frac{\lambda_i - \lambda_1}{\lambda_i} \sqrt{1-\delta_0},
    \end{equation*}
    where $0 < \tilde\eta \ll 1$.
    
    \paragraph{Part 2e: Exponential Decay in the Inductive Step.}
    Under these conditions for $\tau_1, \cdots, \tau_n$, we have that 
    \[
        \epsilon_{k_1 + 1} < \epsilon_{k_1} \implies \inner*{u_1, g_{k_1+1}}^2 \geq \delta_0 \norm*{g_{k_1+1}}^2.
    \]
    By Proposition \eqref{prop:sum_angle}, we have $\inner*{v_{k_1}, g_{k_1+1}}^2 \geq (2\delta_0 - 1)^2$. Then, Lemma \eqref{lem:decay_rescaled} with $\delta = (2\delta_0 - 1)^2$, we obtain
    \begin{align*}
        f(x_{k_1+2}) - f(x_{k_1+1}) 
            &\leq \Bigg\{ \left[ -\alpha \tau + \frac{\alpha^2 L}{2} \left(1 - \frac{n-1}{d}(1+\tau)\right) \right] (2\delta_0 - 1)^2 \\&\quad+ \left[-\alpha (1-\tau) + \frac{\alpha^2 L}{2} \frac{n-1}{d}\right] \Bigg\}  \norm*{g_{k_1+1}}^2.
    \end{align*}
    The minimum of the polynomial $ax^2 - bx$ where $a,b > 0$ is attained at $x^\ast = \frac{b}{2a}$, meaning
    \begin{align}
        \alpha^\ast
        &= \frac{d}{L} \left(\frac{\tau (2 \delta_0- 1)^2 + (1-\tau)}{(n-1) - (2\delta_0-1)^2((n-1)(1+\tau) - d)}\right) \nonumber \\
        &= \frac{d}{L} \left(\frac{1 - 4\tau\delta_0(1-\delta_0)}{(n-1)(1 -(2\delta_0-1)^2(1+\tau)) + d(2\delta_0-1)^2)}\right) \nonumber \\
        &= \frac{d}{L} \left(\frac{1 - 4\tau\delta_0(1-\delta_0)}{(n-1)(-\tau + 4\delta_0(1-\delta_0)(1+\tau)) + d(2\delta_0-1)^2)}\right).
    \end{align}
    Substituting this back, and noting that $a{x^\ast}^2 - bx^\ast = -\frac{b^2}{4a} = -\frac{b}{2} x^\ast$, we find that
    \begin{align*}
        f(x_{k_1+2}) - f(x_{k_1+1}) 
            &\leq -\frac{1}{2} (1-4\tau \delta_0 (1-\delta_0)^2)\alpha^\ast \norm*{g_{k_1+1}}^2 \\
            &= -\frac{d}{L} \left(\frac{[1 - 4\tau\delta_0(1-\delta_0)]^2}{(n-1)(-\tau + 4\delta_0(1-\delta_0)(1+\tau)) + d(2\delta_0-1)^2)}\right) \norm*{g_{k_1+1}}^2.
    \end{align*}
    Consequently, we have
    \begin{align*}
        f(x_{k_1+2}) - f(x^\ast)
            &\leq f(x_{k_1+1}) - f(x\ast) - \hat \rho \norm*{g_{k_1+1}}^2 \\
            &\leq (1-2\mu\hat\rho) (f(x_{k_1+1}) - f(x\ast)) \\
            &\leq \tilde\rho^4 \rho_{decay}^{2k_1},
    \end{align*}
    with probability at least $1 - e^{-cd\tau^2}$, conditioned on the iterate $k_1 + 1$. We indicate the decay in 1 step with $\tilde \rho^2$, since the gradient term is also squared. Then, $\tilde \rho$ satisfies
    \begin{equation*}
        \tilde \rho^2 = (1 - 2\mu \hat \rho) = 1 - 2 \mu \frac{d}{L} \left(\frac{[1 - 4\tau\delta_0(1-\delta_0)]^2}{(n-1)(-\tau + 4\delta_0(1-\delta_0)(1+\tau)) + d(2\delta_0-1)^2)}\right) < 1.
    \end{equation*}
    Note that for $f(x_{k_1+1}) - f(x_{k_1})$, since $g_{k_1} = v_{k_1}$, we have that the polynomial has a larger root, which means we can use the same step size as for $k_1+1$ step. Therefore, the decay factor will be the same (we use $\tilde \rho$ to denote $\tilde \rho_{k_2}$ above since we find that this factor is independent of the index). Now, we have shown that given the past, the next step still enjoys a decay with a slightly different factor as compared to Phase 1. Nonetheless, the inductive step will still hold and we have $t_i^{(k)} \leq O(\tilde \epsilon_i^{(k)})$. 

    \paragraph{Part 3: Concluding the Proof.} 
    Consider the events as follow.
    \begin{align*}
        \mathcal{A}(\tau) 
            &:= \left\{ \norm*{g_k} \leq C \rho_{decay}^k \quad \forall k \in [k_0, k_1] \right\}\\
        \mathcal{B}_i^{(k)}(\tau_i) 
            &:= \Bigg\{\abs*{\inner*{\hat P_k^\top u_i, \hat P_k^\top g_k} - \inner*{\Proj_{{v_{k_1}^\perp}}u_1, \Proj_{{v_{k_1}^\perp}} g_k}} \\
            &\qquad \leq \tau_i \norm*{\Proj_{{v_{k_1}^\perp}}u_1}\norm*{\Proj_{{v_{k_1}^\perp}}g_k}\Bigg\}, \quad k \in (k_1, K_\epsilon), i \in [n] \\
        \mathcal{C}_k(\tau) 
            &:= \left\{ \norm*{\hat P_k^\top g_k}^2 \approx (1 \pm \tau) \norm*{\Proj_{\hat v_{k_1}}g_k}^2 \right\}, \quad k \in (k_1, K_\epsilon).
    \end{align*}
    where
    \begin{align*}
        \mathbb{P} [\mathcal{A}_1(\tau)] &\geq 1 - e^{-\frac{1}{8} k_1 (1 - p_{decay}(\tau, \delta, n, d))} \\
        \mathbb{P} \left[ \mathcal{B}_i^{(k)}(\tau_i) \right] &\geq 1 - 2e^{-cd\tau_i^2} \\
        \mathbb{P} \left[ \mathcal{C}_k(\tau) \right] &\geq 1 - 2e^{-cd\tau^2}.
    \end{align*}
    Then, with $\mathcal{F}_k = \sigma(\tilde P_0, \cdots, \tilde P_{k-1})$, we know that $\mathcal{A}(\tau) \in \mathcal{F}_{k_1}$, similarly for $\mathcal{B}_i^{(k)}(\tau_i), \mathcal{C}_k(\tau) \in \mathcal{F}_k$. Let $\mathcal{B}_k = \mathcal{C}_k\cap \left(\bigcap_{i \in [n]}\mathcal{B}_i^{(k)}\right)$, we have $\mathcal{B}_k$ holds with probability at least $1 - 2\sum_{i=1}^n e^{-cd\tau_i^2} - 2e^{-cd\tau^2}$. Using Lemma \ref{lem:prod_prob}, we have with probability at least
    \begin{equation}\label{eqn:local_regime_prob}
        \left(1 - 2\sum_{i \in [n]} e^{-cd\tau_i^2} - 2e^{-cd\tau^2}\right)^{K_\epsilon - k_1} \times \left(1 - e^{-\frac{1}{8} k_1 (1 - p_{decay}(\tau, \delta, n, d))}\right)
    \end{equation}
    that the projections onto every other directions $\abs*{u_i^\top g_k}/\abs*{u_1^\top g_k} = O((\lambda_i - \lambda_1)\sqrt{1-\delta_0})$ to be much smaller than the projection onto the dominant direction $u_1$.
\end{proof}

\end{document}